\theoremstyle{plain}
\theoremstyle{definition}
\theoremstyle{remark}
\pgfplotsset{width=15cm,compat=1.9}
\newtheorem{theorem}{Theorem}
\newtheorem{corollary}{Corollary}
\newtheorem{lemma}{Lemma}
\newtheorem*{definition*}{Definition}
\newcommand*{\rom}[1]{\expandafter\@slowromancap\romannumeral #1@} 
\title{\Large\textbf{Varentropy Estimation via Nearest Neighbor Graphs
}}
\begin{document}



\author{Nikolai Leonenko\thanks{\scriptsize School of Mathematics, Cardiff University, UK. Email: LeonenkoN@cardiff.ac.uk}   
\and Yu Sun\thanks{\scriptsize Systemic Risk Centre, London School of Economics and Political Science, UK. Email: y.u.sun@outlook.com} \textsuperscript{,}\footnotemark[3] 
\and Emanuele Taufer \thanks{\scriptsize Department of Economics and Management, University of Trento, Italy. Email: emanuele.taufer@unitn.it} 
}




\maketitle \thispagestyle{empty}


\begin{abstract}
The Varentropy is a measure of the variability of the information content of random vector and it is invariant under affine transformations. We introduce the statistical estimate of varentropy of random vector based on the nearest neighbor graphs (distances). The asymptotic unbiasedness and $L^{2}$-consistency of the estimates are established.

\vspace{20pt}
\noindent \textbf{Keywords}: Varentropy estimation, Nearest Neighbor graphs, Unbiasedness, Consistency, Kozachenko-Leonenko estimate. \\
\vspace{10pt}
\end{abstract}

\newpage
\setcounter{page}{1}
\section{Introduction}
Let $ (\Omega , \mathscr{F}, P) $ be a probability space and let $ X\in\mathbb{R}^{d}$ be a random vector defined on it. Suppose that the joint distribution of $ X $ has a density $ f(x) $ with respect to the Lebesgue measure $ dx $, with the support $\mathcal{S}=\mathcal{S}(f)=\{x\in\mathbb{R}^{d}: f(x)>0\} $. 
We consider the random variable 
\begin{eqnarray}\label{log variable}
h(X)&=&-\log f(X)\,,
\end{eqnarray}
which can be thought of as the (random) information content of $ X $ (or as log-likelihood function). The average value of information content of $ X $ is known as Shannon (or Boltzmann-Gibbs) entropy. 
\begin{eqnarray}\label{shannon entropy}
H &=& H(X):=\mathsf{E}\big[h(X)\big]=-\int_{\mathcal{S}}{f(x)\log f(x)}dx \,.
\end{eqnarray}
The variance of the information content of $ X $ is defined as follows.
\begin{eqnarray}\label{Varentropy}
V=V(X):=\mathsf{Var}\big[h(X)\big]=\int_{\mathcal{S}}{f(x)\big[\log f(x)}\big]^{2}dx- \bigg[\int_{\mathcal{S}}{f(x)\log f(x)}dx\bigg]^{2}
\end{eqnarray}
and known as Shannon varentropy. It may be thought as the measure of the variability of information content (or information varentropy), see e.g. Bobkov and Madiman (2011).
Let us emphasize that the distribution of the difference $ h(X)-H(X) $ is stable under all affine transformation of the space, i.e.,
$
h(\mathcal{A}X)-H(\mathcal{A}X)=h(X)-H(X)
$
for all invertible affine maps $ \mathcal{A}: \mathbb{R}^{d} \longrightarrow \mathbb{R}^{d} $. Thus, the varentropy represents an affine invariant: $ V(\mathcal{A}X+\mathcal{B})=V(X) $, $ \mathcal{B}\in\mathbb{R}^{d} $. In other words, the varentropy is independent of the mean and covariance matrix of the random vector X. In particular, all non-generate normal vectors have varentropy $ V(X)=\frac{d}{2} $. This property can be useful for a goodness-of-fit test of normality.
Note that Shannon entropy is shift invariant only (but not scale invariant). It is easy to see that for $ d=1 $, the varentropy of any normal, exponential and uniform distribution are equal to $ \frac{1}{2}$, $ 1 $, $ 0 $, correspondingly, and hence, using varentropy (or its statistical counterpart), one can separate normal, exponential and uniform distributions.

Song (2001) introduces varentropy and its statistical counterpart as an distinct measure of distribution shape, which can be an alternative to the kurtosis measure. When the traditional kurtosis measure is not measurable, such as Student's distributions with degrees of freedom less than four and Pareto distributions, the varentropy is a measure that can be used to compare the heavy-tailed distributions instead of kurtosis measure, see also Zogrofus (2008) and Batsidis and Zografos (2013) for the study of an empirical varentropy in the class of elliptically contoured distributions. 
Recently, some applications of varentropy in survival analysis has been studied by Di Crescenzo and Paolillo (2021). 

We consider the problem of estimating varentropy in Eq.(\ref{Varentropy}) for random vector, for an $ N$ independent and identically distributed (i.i.d.) sample $ X_{1},\ldots ,X_{N} $, $ N\geq 2 $. This estimator are based on the nearest-neighbor (NN) distances computed from the sample. We show that varentropy in Eq.(\ref{Varentropy}) can be estimated consistently with minimal assumption on the density $ f $.
We consider a new estimator of varentropy based on the approach proposed by Kozachenko and Leonenko (1987), who consider the estimator of entropy in Eq.(\ref{shannon entropy}), see also Goria et al.(2005), Tsybakov and Van der Meulen (1996), Leonenko, N. N., Pronzato, L., Savani, V (2008), Berrett, Samworth and Yuan (2019), among others. To prove our results, we exploit the new approach to study limit theorems for point processes related to the k-th nearest-neighbor graphs proposed by Kozachenko and Leonenko (1987), see also their references.

Bulinski and Dimitrov (2019) proposed to use the analogous of the Hardy-Littlewood maximal functions for an investigation of unbiasedness and consistency of the NN estimators of entropies.
Berrett, Samworth and Yuan (2019) proposed an efficient multivariate entropy estimates via $k$th-NN distances, assuming that $ k(N)\rightarrow\infty $, $ N\rightarrow\infty $ with special rate.
Recently, Devroye and Gy\"orfi (2022) relaxed the smoothness conditions and prove the strong consistency of the NN estimators of entropy.
For some other recent development of the NN estimates and their applications to goodness-of-fit tests, see e.g. Penrose and Yukich (2011, 2013), Biau, G., Devroye, L. (2015), Delattre and Fournier (2017), Gao, Oh and Viswanath (2017), Leonenko, Makogin and Cadirci (2021), Cadirci et al. (2022), Berrett and Samworth (2023).

The paper is organised as follows: In Section 2, we define general notations and briefly introduce the NN estimate of varentropy and formulate the condition on density for its asymptotic unbiasedness and consistency. The proofs are collected in Section 3 and 4.
\section{Main Results}
Let $ X_{1},\ldots ,X_{N} $ be i.i.d. random vectors having the same law as the random vector $ X\in\mathbb{R}^{d} $. Assume that $ X $ has a density $ f $ with respect to the lebesgue measure $ dx $ in $ \mathbb{R}^{d} $. For each $ i=1,\ldots , N $ and $ N\geq 2 $, set $ \rho_{i}=\min\big\{\rho(X_{i},X_{j}), \,\forall i\in\{1\ldots N\}\,,\forall j\in\{1,\ldots,N\}\setminus\{i\} \big\} $, where $ \rho(x,y)=\parallel x-y\parallel $ denote the \textit{Euclidean distance} between $ x,y\in\mathbb{R}^{d} $. In other words, $\rho_{i}$ is the distance from $ X_{i} $ to its nearest neighbor (NN) in the sample $ \{X_{1},\ldots,X_{N}\}\setminus\{X_{i}\} $, and $ \{\rho_{i}, i=1,\ldots , N\} $ defines the NN random graph.
Recall the NN estimate (or Kozachenko-Leonenko estimator 1987) of entropy $ H $ defined in Eq.(\ref{shannon entropy}), is provided by the formula 
\begin{eqnarray}
&&\hspace{.4cm} H_{N}=\dfrac{1}{N}\sum\limits_{i=1}^{N}\zeta_{i}(N) \label{HN zeta} \\
&&\hspace{-1cm}\text{with}\,\,\zeta_{i}(N)=\log \big[{\rho_{i}}^{d}V_{d}\,e^{\gamma}(N-1)\big]\,,\,\, i=1,\ldots, N. \label{zeta_i N}
\end{eqnarray}
where $ V_{d}:={\pi^{(d/2)}}/{\Gamma\big(\frac{d}{2}+1\big)}$, $ \gamma=-\int_{0}^{\infty} e^{-t}\log t\,d{t} \approx 0.5772 $ are the volume of a unit ball in $ \mathbb{R}^{d} $ and the  \text{Euler-Mascheroni constant}, respectively.
It is worth to remark that we consider the NN distances for simplicity only. Similar results can be obtained  for the estimates of entropy and varentropy based on the $k$-th NN graphs, constructed by distances  from $ X_{i} $ to its $k$-th NN. This will be done elsewhere.

We define the NN estimate of the varentropy in Eq.(\ref{Varentropy}) as follows.
\begin{eqnarray}
&&\hspace{.6cm} V_{N}:=S^{2}_{N}- H_{N}^{2} \label{V_N} \\
&&\text{with}\,\, S_{N}^{2}= \dfrac{1}{N}\sum\limits_{i=1}^{N}\bigg[\zeta_{i}^{2}(N)-\dfrac{\pi^{2}}{6}\bigg]  \label{S_N} 
\end{eqnarray}
where $ H_{N} $ and $ \zeta_{i}(N) $ are defined in Eqs.(\ref{HN zeta})-(\ref{zeta_i N}). The nature of the constant ${\pi^{2}}/{6}  $ follows from the identity:
\begin{align}\label{pi^2/6}
\int^{\infty}_{0}\log^{2}v\, e^{-v}dv - \Big(\int^{\infty}_{0}\log v\, e^{-v}dv\Big)^{2} =\frac{\pi^{2}}{6} \approx 1.6449
\end{align}
Let $ B(x,r)=\{y\in\mathbb{R}^{d}: \rho(x,y)< r\} $ be a ball of a radius $ r>0 $ with a center $ x\in\mathbb{R}^{d} $. Clearly, $ \mid B(x,r)\mid = \mu\big(B(x,r)\big)=r^{d}\,V_{d} $. Set $ G(t) $ a monotonically increasing function on $ [0,\infty) $:
\begin{eqnarray}\label{G(t)}
  G(t) = \begin{cases}
     0, & 0 \leq t < 1, \\[2\jot]
    t\log t, & t\geq 1\,.
  \end{cases}
\end{eqnarray}
Following Bulinski and Dimitrov (2019), we introduce the following functionals:
\begin{eqnarray}
&&\hspace{1.6cm} I_{f}(x,r)=\dfrac{\int_{B(x,r)}f(y)dy}{r^{d}V_{d}}\label{I_f(x,r)} \\
&& M_{f}(x,R)=\sup\limits_{r\in(0,R]}I_{f}(x,r) \quad \text{and}
\quad m_{f}(x,R)=\inf\limits_{r\in(0,R]}I_{f}(x,r) \label{M_f e m_f}
\end{eqnarray}
where $ f$ is the probability density.
It is known (see Bulinski and Dimitrov 2019) that the function $   I_{f}(x,r)$ is continuous in $ (x,r)\in \mathbb{R}^{d}\times (0,\infty) $, while for each $ R>0 $, the functions $ m_{f}(\cdot,R) $ and $ M_{f}(\cdot,R) $ are upper semicontinuous and lower semicontinuous, respectively.
Hence, these non-negative functions are Borel measurable. Clearly, for each $ x\in\mathbb{R}^{d} $,  $ m_{f}(\cdot,R) $ is nonincreasing and $ M_{f}(\cdot,R) $ is nondecreasing. Note in passing that, changing $ \sup_{r\in(0,R]} $ by $ \sup_{r\in(0,\infty)} $ in the definition of $M_{f}(x,R)  $ leads to the celebrated Hardy-Littlewood maximal function $ M_{f}(x) $ widely used in the harmonic analysis.

For a probability density $ f $ in $ \mathbb{R}^{d} $, positive
$ \varepsilon_{i}$, $R_{j}$, where $ i=0,1,2 $ and $ j=1,2$, we define the following functions with values in $ [0,\infty) $. 
\begin{eqnarray}
&&\hspace{-.6cm} K_{f,{\color{black} \alpha}}(\varepsilon_{0}):=\int_{\mathbb{R}^{d}}\bigg(\int_{\mathbb{R}^{d}\setminus\{x\}}G(\mid\log^{ {\color{black} \alpha}}\rho(x,y) \mid)f(y)dy\bigg)^{1+\varepsilon_{0}}f(x)dx\,,\label{condition K}\\
&&\hspace{-.6cm}\text{where}\,\, {\color{black} \alpha = 1,2,3,4 }  \nonumber\\
&&\hspace{-.6cm} Q_{f}(\varepsilon_{1},R_{1}):=\int_{\mathbb{R}^{d}}M^{\varepsilon_{1}}_{f}(x,R_{1})f(x)dx  \label{condition Q}\\
&&\hspace{-.6cm} T_{f}(\varepsilon_{2},R_{2}):=\int_{\mathbb{R}^{d}}m^{-\varepsilon_{2}}_{f}(x,R_{2})f(x)dx \,.\label{condition T}
\end{eqnarray}
The main results are as follows.
\begin{theorem}\label{result I}
Assume that, for some positive $ \varepsilon_{i},\,R_{j}$, where $ i= 0,1,2 $ and $ j=1,2 $, the functionals appearing of Eqs.(\ref{condition K}), (\ref{condition Q}), (\ref{condition T}) are finite for $ {\color{black} \alpha=1,2} $, that is $ K_{f,\,{\color{black} \alpha}}(\varepsilon_{0})<\infty,\,{\color{black} \alpha=1,2}$, $ Q_{f}(\varepsilon_{1},R_{1})<\infty $ and $ T_{f}(\varepsilon_{2},R_{2})<\infty $. Then the estimate of varentropy in Eq.(\ref{V_N}) is asymptotically unbiased, i.e.
\begin{eqnarray}
\mathsf{E}_{}(V_{N})\rightarrow V\,,\qquad N\rightarrow\infty \label{Varentropy exp eq1} 
\end{eqnarray}
\end{theorem}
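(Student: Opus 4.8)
The plan is to use linearity of expectation and the i.i.d.\ structure to reduce the theorem to the convergence of a few low-order moments. Writing $\mathsf{E}(V_N)=\mathsf{E}(S_N^2)-\mathsf{E}(H_N^2)$ and using exchangeability of $X_1,\dots,X_N$, one obtains
\begin{align}
\mathsf{E}(S_N^2)&=\mathsf{E}\big[\zeta_1^2(N)\big]-\frac{\pi^2}{6}\,,\nonumber\\
\mathsf{E}(H_N^2)&=\frac{1}{N}\,\mathsf{E}\big[\zeta_1^2(N)\big]+\frac{N-1}{N}\,\mathsf{E}\big[\zeta_1(N)\zeta_2(N)\big]\,.\nonumber
\end{align}
Hence it suffices to prove the two limits
\begin{align}
\mathsf{E}\big[\zeta_1^2(N)\big]\to\frac{\pi^2}{6}+\mathsf{E}\big[h^2(X)\big]\,,\qquad \mathsf{E}\big[\zeta_1(N)\zeta_2(N)\big]\to H^2\,.\nonumber
\end{align}
Granting these, the diagonal term $\frac1N\mathsf{E}[\zeta_1^2(N)]$ vanishes, so $\mathsf{E}(S_N^2)\to\mathsf{E}[h^2(X)]$ and $\mathsf{E}(H_N^2)\to H^2$, whence $\mathsf{E}(V_N)\to\mathsf{E}[h^2(X)]-H^2=V$.

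For the marginal moment I would condition on $X_1=x$. As the remaining $N-1$ points are independent of $X_1$, the nearest-neighbour survival function is $P(\rho_1>r\mid X_1=x)=\big(1-r^dV_d\,I_f(x,r)\big)^{N-1}$. Introducing the scaled variable $\xi_1=(N-1)\rho_1^dV_d$, so that $\zeta_1(N)=\log\xi_1+\gamma$, a change of variables turns this into $\big(1-\frac{t\,I_f(x,r_t)}{N-1}\big)^{N-1}$ with $r_t=\big(t/((N-1)V_d)\big)^{1/d}\to0$. Since $I_f$ is continuous and $I_f(x,r)\to f(x)$ as $r\to0$ for a.e.\ $x$ by Lebesgue differentiation, the conditional law of $\xi_1$ converges to $\mathrm{Exp}(f(x))$, i.e.\ $\zeta_1(N)\Rightarrow\log W+\gamma-\log f(x)$ with $W\sim\mathrm{Exp}(1)$. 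Using $\mathsf{E}[\log W]=-\gamma$ and, by the identity (\ref{pi^2/6}), $\mathsf{Var}[\log W]=\pi^2/6$, the limiting conditional first and second moments equal $h(x)$ and $\frac{\pi^2}{6}+h^2(x)$; integrating against $f$ gives $H$ and $\frac{\pi^2}{6}+\mathsf{E}[h^2(X)]$, the target values.

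The crux of the argument, and the step I expect to be the main obstacle, is to upgrade this convergence in distribution to convergence of the first two moments, i.e.\ to establish the uniform integrability of $\{\zeta_1(N)\}$ and $\{\zeta_1^2(N)\}$ in $N$. This is exactly what the hypotheses deliver, following the maximal-function technique of Bulinski and Dimitrov (2019). The small-distance regime $\rho_1\to0$ (where $\log\rho_1\to-\infty$) is caused by local clustering and is controlled through the upper maximal function via $r^dV_d\,I_f(x,r)\le r^dV_d\,M_f(x,R)$ for $r\le R$, the integrability of $M_f^{\varepsilon_1}$ in $Q_f(\varepsilon_1,R_1)<\infty$ providing a uniform envelope for the corresponding tail. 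The large-distance regime $\rho_1\to\infty$ (where $\log\rho_1\to+\infty$) is governed by the lower maximal function via $r^dV_d\,I_f(x,r)\ge r^dV_d\,m_f(x,R)$ for $r\le R$, controlled by $T_f(\varepsilon_2,R_2)<\infty$, together with the conditions $K_{f,\alpha}(\varepsilon_0)<\infty$ on the tails of $f$: the case $\alpha=1$ dominates the $|\log\rho_1|$ contributions relevant to the first moment, while $\alpha=2$ dominates the $\log^2\rho_1$ contributions relevant to the second moment, which is why both are assumed. These bounds produce a $P$-integrable majorant uniform in $N$, allowing the limit to be passed under the integral in $x$.

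Finally, for the cross term I would establish the asymptotic independence of $\zeta_1(N)$ and $\zeta_2(N)$: conditionally on $(X_1,X_2)$ the two nearest-neighbour balls are disjoint with probability tending to $1$, so the two NN distances decouple and $\mathsf{E}[\zeta_1(N)\zeta_2(N)]-\mathsf{E}[\zeta_1(N)]\,\mathsf{E}[\zeta_2(N)]\to0$. Since $\mathsf{E}[\zeta_1(N)]\to H$ by the first-moment computation above (using $\alpha=1$) and the same uniform-integrability envelope applies to the product, this yields $\mathsf{E}[\zeta_1(N)\zeta_2(N)]\to H^2$. Substituting the two limits into the displays above gives $\mathsf{E}(V_N)\to V$, completing the proof.
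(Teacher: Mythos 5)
Your proposal is correct and follows essentially the same route as the paper: the decomposition $\mathsf{E}(V_N)=\mathsf{E}(S_N^2)-\mathsf{E}(H_N^2)$, the reduction of $\mathsf{E}(S_N^2)$ to $\mathsf{E}[\zeta_1^2(N)]-\pi^2/6$, the conditional convergence in law of $\xi_{N,x}$ to an exponential variable with the resulting Gumbel moments $-\log f(x)$ and $\log^2 f(x)+\pi^2/6$, and the upgrade to moment convergence via uniform integrability controlled by $K_{f,\alpha}$, $Q_f$ and $T_f$ are exactly the paper's argument (its Lemma~\ref{result I.1}). The only divergence is that you re-derive $\mathsf{E}(H_N^2)\to H^2$ by expanding the square and proving the cross-moment limit $\mathsf{E}[\zeta_1(N)\zeta_2(N)]\to H^2$ through asymptotic independence of the two nearest-neighbour balls, whereas the paper simply cites the known asymptotic unbiasedness and $L^2$-consistency of the Kozachenko--Leonenko entropy estimator from Kozachenko--Leonenko (1987) and Bulinski--Dimitrov (2019); since those cited results are themselves proved by the very decoupling argument you sketch, this is a difference of packaging rather than of method.
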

\begin{theorem}\label{result II}
Assume that, for some positive $ \varepsilon_{i},\,R_{j}$, where $ i= 0,1,2 $ and $ j=1,2 $, the functionals appearing in Eqs.(\ref{condition K}), (\ref{condition Q}), (\ref{condition T}) are finite for $ {\color{black} \alpha=1,2} $, that is $ K_{f,\,{\color{black} \alpha}}(\varepsilon_{0})<\infty,\,{\color{black} \alpha=1,2,3,4}$, $ Q_{f}(\varepsilon_{1},R_{1})<\infty $ and $ T_{f}(\varepsilon_{2},R_{2})<\infty $. Then the estimate of varentropy in Eq.(\ref{V_N})is $ L^{2}-$consistent, i.e.
\begin{eqnarray}\label{Varentropy var eq1}
\mathsf{E}\big(V_{N}-V\big)^{2} \,\xrightarrow[\text{}]{}\,0 \,,\quad N\rightarrow \infty\,.
\end{eqnarray}
\end{theorem}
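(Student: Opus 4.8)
The plan is to reduce the $L^2$ statement to a variance bound and then to the Kozachenko--Leonenko machinery applied to the (squared) summands. Since Theorem~\ref{result I} already gives $\mathsf{E}(V_N)\to V$, I would write
\begin{equation*}
\mathsf{E}\big(V_N-V\big)^2=\mathsf{Var}(V_N)+\big(\mathsf{E}(V_N)-V\big)^2,
\end{equation*}
so the bias term is negligible and the whole problem becomes proving $\mathsf{Var}(V_N)\to0$. Writing $V_N=S_N^2-H_N^2$ and using $\mathsf{Var}(A-B)\le 2\mathsf{Var}(A)+2\mathsf{Var}(B)$, it then suffices to establish $\mathsf{Var}(S_N^2)\to0$ and $\mathsf{Var}(H_N^2)\to0$ separately.

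Both quantities are built from averages of functions of the normalized NN distances, so I would treat them by a single template. Set $\xi_i=\rho_i^{\,d}V_d(N-1)f(X_i)$, so that $\zeta_i(N)=h(X_i)+\big(\log\xi_i+\gamma\big)$, which splits each summand into the information content at $X_i$ plus a normalized-distance term. For an average $A_N=\frac1N\sum_i\phi(\zeta_i(N))$ (take $\phi(z)=z$ for $H_N$ and $\phi(z)=z^2-\pi^2/6$ for $S_N^2$), exchangeability gives
\begin{equation*}
\mathsf{Var}(A_N)=\frac1{N^2}\sum_{i}\mathsf{Var}\big(\phi(\zeta_i)\big)+\frac{N-1}{N}\,\mathsf{Cov}\big(\phi(\zeta_1),\phi(\zeta_2)\big).
\end{equation*}
The diagonal term is $O(1/N)$ as soon as $\sup_{N,i}\mathsf{E}[\phi(\zeta_i)^2]<\infty$; for $H_N$ this needs a uniform bound on $\mathsf{E}[\zeta_i^2]$, and for $S_N^2$ a uniform bound on $\mathsf{E}[\zeta_i^4]$. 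This is exactly where the hypotheses enter: $Q_f(\varepsilon_1,R_1)<\infty$ controls, through the maximal function $M_f$, the event that $\rho_i$ is atypically small (the positive part of $\log\rho_i$), $T_f(\varepsilon_2,R_2)<\infty$ controls, through $m_f$, the event that $\rho_i$ is atypically large (the negative part), and the finiteness of $K_{f,\alpha}(\varepsilon_0)$ for $\alpha=1,2,3,4$ supplies the logarithmic moments of $h(X_i)$ and of the cross terms up to the fourth power, the $\varepsilon_0$ margin giving the extra integrability needed to pass from weak convergence to convergence of moments. This is precisely why Theorem~\ref{result II} requires $\alpha$ up to $4$, whereas Theorem~\ref{result I}, which only sees $\mathsf{E}[\zeta_i^2]$, needs $\alpha$ only up to $2$. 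Once $H_N\to H$ and $S_N^2\to\mathsf{E}[h^2(X)]$ in $L^2$ with uniformly bounded fourth moments are in hand, the nonlinear step $\mathsf{Var}(H_N^2)\to0$ follows from $H_N^2-H^2=(H_N-H)^2+2H(H_N-H)$ together with the uniform integrability.

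The main obstacle is the off-diagonal covariance $\mathsf{Cov}(\phi(\zeta_1),\phi(\zeta_2))$, which must tend to $0$ even though it is multiplied by a factor tending to $1$. Here I would invoke the point-process approach to NN graphs used by Kozachenko--Leonenko and Bulinski--Dimitrov: conditionally on $X_1=x$, $X_2=y$ with $x\ne y$, for large $N$ the two nearest-neighbor searches take place in essentially disjoint balls, so the remaining $N-2$ points populate the two neighborhoods in an asymptotically independent (Poissonized) manner. This yields the joint weak limit $(\zeta_1,\zeta_2)\Rightarrow(h(X_1)+\log E_1+\gamma,\,h(X_2)+\log E_2+\gamma)$ with $E_1,E_2$ i.i.d.\ standard exponential and independent of $(X_1,X_2)$, so the limiting covariance factorizes to $0$. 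Upgrading this to $\mathsf{Cov}(\phi(\zeta_1),\phi(\zeta_2))\to0$ requires uniform integrability of the products $\phi(\zeta_1)\phi(\zeta_2)$ (for $S_N^2$ these are of size $\zeta_1^2\zeta_2^2$, hence the need for $\mathsf{E}[\zeta_i^{4}]$ with a margin), which is again furnished by $K_{f,\alpha}$ with $\alpha\le4$, $Q_f$ and $T_f$ via H\"older's inequality. Combining the vanishing diagonal and off-diagonal contributions gives $\mathsf{Var}(S_N^2)\to0$ and $\mathsf{Var}(H_N^2)\to0$, hence $\mathsf{Var}(V_N)\to0$, completing the proof.
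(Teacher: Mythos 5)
Your overall strategy coincides with the paper's: split off the bias via \textit{Theorem}~\ref{result I}, reduce the problem to $\mathsf{Var}(S_N^2)\to0$ and $\mathsf{Var}(H_N^2)\to0$ (you use $\mathsf{Var}(A-B)\le2\mathsf{Var}(A)+2\mathsf{Var}(B)$ where the paper uses Cauchy--Schwarz; both work), decompose each variance into a diagonal term of order $1/N$ and an off-diagonal covariance, and kill the covariance through the asymptotic independence of the two nearest-neighbour searches conditionally on $X_1=x$, $X_2=y$ with $x\ne y$, upgraded to convergence of expectations by uniform integrability of $\zeta_1^2(N)\zeta_2^2(N)$ furnished by $K_{f,\alpha}(\varepsilon_0)$ for $\alpha\le4$, $Q_f(\varepsilon_1,R_1)$ and $T_f(\varepsilon_2,R_2)$. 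That is exactly the paper's treatment of $\mathsf{Var}(S_N^2)$ and of the covariance term, including the correct identification of why $\alpha$ up to $4$ is needed here but only up to $2$ in \textit{Theorem}~\ref{result I}.

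The one step where you genuinely diverge --- and where there is a gap --- is $\mathsf{Var}(H_N^2)\to0$. The paper expands $\mathsf{E}(H_N^4)$ multinomially, bounds every mixed moment $\mathsf{E}(\zeta_i^3\zeta_j)$, $\mathsf{E}(\zeta_i^2\zeta_j^2)$, $\mathsf{E}(\zeta_i^2\zeta_j\zeta_k)$ by $\mathsf{E}(\zeta_1^4)$ via a generalized H\"older inequality, and then shows that the dominant term $\mathsf{E}(\zeta_i\zeta_j\zeta_k\zeta_l)$ tends to $[\mathsf{E}(\zeta_1)]^4$. You instead write $H_N^2-H^2=(H_N-H)^2+2H(H_N-H)$ and claim the conclusion follows from $L^2$-consistency of $H_N$ together with uniformly bounded fourth moments. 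That implication is false as stated: $\mathsf{E}\big[(H_N^2-H^2)^2\big]$ contains the term $\mathsf{E}\big[(H_N-H)^4\big]$, and $L^2$-convergence plus $\sup_N\mathsf{E}\big[(H_N-H)^4\big]<\infty$ does not force $\mathsf{E}\big[(H_N-H)^4\big]\to0$ (take $Y_N=N^{1/4}$ with probability $1/N$ and $0$ otherwise: $\mathsf{E}(Y_N^2)\to0$ while $\mathsf{E}(Y_N^4)\equiv1$). What you actually need is uniform integrability of $(H_N-H)^4$, i.e.\ $L^4$-consistency of $H_N$, and establishing that from the hypotheses requires essentially the same fourth-moment bookkeeping as the paper's direct expansion of $\mathsf{E}(H_N^4)$; it does not come for free from the $L^2$ statement. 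The rest of your plan is sound and matches the paper's route.
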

The proofs of \textit{Theorem} \ref{result I} and \textit{Theorem} \ref{result II} are given in Section 3 and 4, respectively.
Let us consider the following conditions, see Kozachenko-Leonenko (1987), Bulinski and Dimitrov (2019).
(A) For some $ p>1 $,
\begin{align} \label{int log_rho power p}
\int_{\mathbb{R}^{d}}\int_{\mathbb{R}^{d}}|\log\rho(x,y)|^{p}f(x)f(y)dxdy<\infty
\end{align}
(B) There exists a version of density $ f $ such that, for some $ M>0 $,
\begin{align}  \label{f<=M}
f(x)\leq M<\infty\,,\quad x\in\mathbb{R}^{d}
\end{align}
(C1) There exists a version of density $ f $ such that, for some $ m>0 $,
\begin{align}  \label{f<=M}
f(x)\geq m>0,\quad x\in\mathcal{S}(f)
\end{align}
(C2) For a fixed $ R>0 $, there exists a constant $ c>0 $ and a version of density $ f $ such that
\begin{align}\label{m_f>cf}
m_{f}(x,R)\geq c\, f(x),\quad x\in\mathcal{S}(f)
\end{align}
Note that if, for some positive $ \varepsilon,\,R,\, c $, condition (C2) is true and 
\begin{align}\label{int f^(1-varepsilon)}
\int_{\mathbb{R}^{d}} [f(x)]^{1-\varepsilon}dx <\infty\,,
\end{align}
then $ T_{f}(\varepsilon , R)<\infty $.
\begin{corollary} \label{corollary 1}
Any assumption of \textit{Theorem} \ref{result I} concerning the finiteness of functionals (\ref{condition K}), (\ref{condition Q}), (\ref{condition T}) can be replaced by conditions (A), (B), (C1), respectively, and then (\ref{Varentropy exp eq1}) will be true as well. Moreover, if (B) and (C1) are satisfied, then (\ref{Varentropy exp eq1}) holds whenever $ f $ has a bounded
support.
\end{corollary}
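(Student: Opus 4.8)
The plan is to verify, one hypothesis at a time, that each concrete condition forces finiteness of the corresponding functional in \textit{Theorem} \ref{result I}, after which that theorem applies verbatim; the \emph{moreover} clause is then obtained by a separate direct estimate that bypasses (A) entirely. Throughout I write $g_{\alpha}(x)=\int_{\mathbb{R}^{d}\setminus\{x\}}G(|\log^{\alpha}\rho(x,y)|)f(y)\,dy$, so that $K_{f,\alpha}(\varepsilon_{0})=\int g_{\alpha}(x)^{1+\varepsilon_{0}}f(x)\,dx$, and I use $|\log^{\alpha}\rho|=|\log\rho|^{\alpha}$.

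The two outer implications are quick. For $Q_{f}$ under (B): since $f\le M$, the local average $I_{f}(x,r)=|B(x,r)|^{-1}\int_{B(x,r)}f\le M$ for every $r$, whence $M_{f}(x,R_{1})\le M$ and $Q_{f}(\varepsilon_{1},R_{1})\le M^{\varepsilon_{1}}\int f=M^{\varepsilon_{1}}<\infty$ for any $\varepsilon_{1},R_{1}$. For $K_{f,\alpha}$ under (A): because $f(y)\,dy$ is a probability measure and $t\mapsto t^{1+\varepsilon_{0}}$ is convex, Jensen's inequality gives $g_{\alpha}(x)^{1+\varepsilon_{0}}\le\int G(|\log\rho|^{\alpha})^{1+\varepsilon_{0}}f(y)\,dy$, and Fubini then yields
\[
K_{f,\alpha}(\varepsilon_{0})\le\int_{\mathbb{R}^{d}}\int_{\mathbb{R}^{d}}G\big(|\log\rho(x,y)|^{\alpha}\big)^{1+\varepsilon_{0}}f(x)f(y)\,dx\,dy .
\]
As $G$ vanishes on $[0,1)$, the integrand is supported on $|\log\rho|>1$, where $G(|\log\rho|^{\alpha})=\alpha|\log\rho|^{\alpha}\log|\log\rho|$; bounding $\log|\log\rho|\le C_{\eta}|\log\rho|^{\eta}$ controls this by a constant times $|\log\rho|^{(\alpha+\eta)(1+\varepsilon_{0})}$. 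Choosing $\eta,\varepsilon_{0}$ small so that $(\alpha+\eta)(1+\varepsilon_{0})\le p$ — which is possible exactly when $p>\alpha$, so it suffices to invoke (A) with $p>2$ to cover $\alpha\in\{1,2\}$ — the double integral is dominated by the one in (A) and is finite.

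For $T_{f}$ under (C1) I would route through the note preceding the corollary. First, $f\ge m$ on $\mathcal{S}(f)$ gives $\int f^{1-\varepsilon_{2}}=\int_{\mathcal{S}}f\cdot f^{-\varepsilon_{2}}\le m^{-\varepsilon_{2}}\int f=m^{-\varepsilon_{2}}<\infty$, so the integrability ingredient of the note is automatic. It remains to produce (C2), namely $m_{f}(x,R_{2})\ge c\,f(x)$ on $\mathcal{S}(f)$; granting this, the note delivers $T_{f}(\varepsilon_{2},R_{2})\le c^{-\varepsilon_{2}}\int f^{1-\varepsilon_{2}}<\infty$. To reach (C2) one observes that whenever $B(x,r)\subseteq\mathcal{S}(f)$ the bound $f\ge m$ forces $I_{f}(x,r)\ge m$; the only danger comes from balls straddling $\partial\mathcal{S}(f)$ that bleed mass into $\{f=0\}$, which is controlled once the support is well behaved, following Bulinski and Dimitrov (2019).

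This last point is the main obstacle, and it is exactly what the \emph{moreover} clause isolates: lower-bounding the \emph{infimum} $m_{f}(x,R_{2})=\inf_{r\le R_{2}}I_{f}(x,r)$ is delicate, since near-boundary balls can have arbitrarily small mass-to-volume ratio, so (C1) by itself need not yield (C2) for irregular supports. When $f$ has bounded support $\mathcal{S}(f)\subseteq B(0,L)$ the situation simplifies, and (A) even becomes superfluous for the $K$-functional: under (B),
\[
g_{\alpha}(x)\le M\int_{B(x,2L)}G\big(|\log|x-y||^{\alpha}\big)\,dy = M\,d\,V_{d}\int_{0}^{2L}G\big(|\log r|^{\alpha}\big)r^{d-1}\,dr =: C_{\alpha}<\infty ,
\]
the radial integral converging because the logarithmic singularity at $r=0$ is absorbed by $r^{d-1}$; hence $K_{f,\alpha}(\varepsilon_{0})\le C_{\alpha}^{1+\varepsilon_{0}}<\infty$ for every $\alpha$. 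Combining this with $Q_{f}<\infty$ from (B) and $T_{f}<\infty$ from (C1), all hypotheses of \textit{Theorem} \ref{result I} hold and (\ref{Varentropy exp eq1}) follows. I expect the verification of (C2), i.e.\ controlling the local averages of $f$ from below via the Lebesgue density theorem on the bounded support, to be the only genuinely technical step.
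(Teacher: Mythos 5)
The paper does not actually supply an argument for this corollary --- it only records that ``the proofs of Corollaries 1--3 are similar to Bulinski and Dimitrov (2019)'' --- so there is no in-paper proof to match your proposal against; what follows is an assessment of your argument on its own terms. Two of your three implications are sound. (B)~$\Rightarrow Q_{f}(\varepsilon_{1},R_{1})<\infty$ is immediate, exactly as you write. Your Jensen--Fubini reduction of $K_{f,\alpha}(\varepsilon_{0})$ to the moment condition is also correct, but note what it actually requires: since $G(|\log\rho|^{\alpha})\asymp \alpha|\log\rho|^{\alpha}\log|\log\rho|$ on its support, any bound of $K_{f,\alpha}(\varepsilon_{0})$ by the integral in (A) needs $p>\alpha(1+\varepsilon_{0})$, hence $p>2$ for $\alpha=2$; indeed $K_{f,2}(\varepsilon_{0})<\infty$ forces $\int\int_{\{|\log\rho|>e\}}|\log\rho|^{2}f(x)f(y)\,dx\,dy<\infty$ by Jensen in the other direction, so condition (A) as literally stated (``some $p>1$'') cannot imply it. You noticed this and silently upgraded to $p>2$; you should flag explicitly that the corollary's hypothesis (A) is too weak as written for the varentropy setting and must be read with $p>2$ (the $p>1$ threshold is inherited from the entropy case, where only $\alpha=1$ appears).

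The genuine gap is the third implication, (C1)~$\Rightarrow T_{f}(\varepsilon_{2},R_{2})<\infty$, which you route through (C2) and the note preceding the corollary but never close. As you yourself observe, (C1) does not imply (C2): near an irregular part of $\partial\mathcal{S}(f)$ the ratio $|B(x,r)\cap\mathcal{S}(f)|/|B(x,r)|$ can be arbitrarily small for some $r\le R_{2}$, so no uniform $c$ with $m_{f}(x,R_{2})\ge c\,f(x)$ need exist. Worse, the conclusion you want cannot be reached by any route from (C1) alone: take $d=1$, $\mathcal{S}(f)$ a union of intervals of lengths $\ell_{n}=1/(n\log^{2}n)$ whose centres are spaced $4R_{2}$ apart, and $f=\mathds{1}_{\mathcal{S}(f)}/|\mathcal{S}(f)|$. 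Then (B) and (C1) hold, yet for $x$ in the middle of the $n$-th interval one has $m_{f}(x,R_{2})\lesssim \ell_{n}$, so $T_{f}(\varepsilon_{2},R_{2})\gtrsim\sum_{n}\ell_{n}^{1-\varepsilon_{2}}=\infty$ for every $\varepsilon_{2}>0$. So the first sentence of the corollary cannot be proved by verifying $T_{f}<\infty$ from (C1) without some additional input (bounded support, or (C2) as in Corollary~3); this is not a defect you can repair by ``following Bulinski and Dimitrov'' in the way you indicate. For the \emph{moreover} clause your direct bound on $g_{\alpha}$ under (B) and bounded support is correct and is the right way to dispense with (A), but you still leave $T_{f}<\infty$ unproved there as well; in the bounded-support case it is in fact recoverable (with gaps $g_{n}$ and lengths $\ell_{n}$ both summable, H\"older gives $\sum\ell_{n}^{1-\varepsilon}g_{n}^{\varepsilon}\le(\sum\ell_{n})^{1-\varepsilon}(\sum g_{n})^{\varepsilon}<\infty$, and more generally one controls $m_{f}^{-\varepsilon}$ via the maximal function of $\mathds{1}_{\mathcal{S}(f)^{c}}$ on a bounded set), and that argument --- not the reduction to (C2) --- is the step you need to write out.
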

\begin{corollary} \label{corollary 2}
Any assumption of \textit{Theorem} \ref{result II} concerning the finiteness of functionals (\ref{condition K}), (\ref{condition Q}), (\ref{condition T}) can be replaced, respectively, by the following ones: (\ref{int log_rho power p}) is valid for some $ p>2 $, (B) and (C1). Then (\ref{Varentropy var eq1}) will be true. Moreover, if (B) and (C1) are satisfied, then (\ref{Varentropy var eq1}) holds whenever $ f $ has a bounded support.
\end{corollary}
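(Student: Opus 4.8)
The plan is to deduce Corollary \ref{corollary 2} directly from Theorem \ref{result II} by checking that the concrete triple of hypotheses---(A) with $p>2$, (B), and (C1)---forces the finiteness of the three functionals $K_{f,\alpha}(\varepsilon_0)$, $Q_f(\varepsilon_1,R_1)$ and $T_f(\varepsilon_2,R_2)$ for a suitable choice of the parameters $\varepsilon_0,\varepsilon_1,\varepsilon_2,R_1,R_2$. Two of these verifications are insensitive to whether one is proving asymptotic unbiasedness or $L^2$-consistency, so they coincide with the corresponding steps behind Corollary \ref{corollary 1}; only the bound on $K_{f,\alpha}$ must be pushed to the higher-order exponents entering the second-moment analysis, which is precisely why $p>1$ is strengthened to $p>2$.

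For $Q_f$, condition (B) is immediate: if $f\le M$ then $I_f(x,r)=(r^dV_d)^{-1}\int_{B(x,r)}f\,dy\le M$ for every $r$, hence $M_f(x,R_1)\le M$ and $Q_f(\varepsilon_1,R_1)\le M^{\varepsilon_1}\int f\,dx=M^{\varepsilon_1}<\infty$ for any $\varepsilon_1,R_1$. For $T_f$, condition (C1) first yields $\int_{\mathbb{R}^d}f^{1-\varepsilon}\,dx\le m^{-\varepsilon}\int f\,dx<\infty$ for every $\varepsilon>0$, since $f\ge m$ on $\mathcal{S}(f)$; combined with a lower bound of type (C2), namely $m_f(x,R_2)\ge c\,f(x)$, this gives $T_f(\varepsilon_2,R_2)\le c^{-\varepsilon_2}\int f^{1-\varepsilon_2}\,dx<\infty$, exactly as in the remark preceding Corollary \ref{corollary 1}. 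I would obtain the lower bound on $m_f$ as there: away from the boundary of $\mathcal{S}(f)$ the ball $B(x,r)$ lies in the support and $I_f(x,r)\ge m$, so the only care needed is a boundary layer, which is harmless once $f$ is bounded below on its support.

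The substantive step is the bound on $K_{f,\alpha}(\varepsilon_0)$. I would split the inner integral at $\rho(x,y)=e^{-1}$ and $\rho(x,y)=e$, using that $G(|\log\rho|^{\alpha})=0$ on $e^{-1}\le\rho\le e$. On the near region $\rho\le e^{-1}$, passing to polar coordinates and using (B),
\[
\int_{\rho(x,y)\le e^{-1}}G\bigl(|\log\rho(x,y)|^{\alpha}\bigr)f(y)\,dy\le M\,d\,V_d\int_0^{e^{-1}}\bigl(\log\tfrac1r\bigr)^{\alpha}\,\alpha\log\log\tfrac1r\;r^{d-1}\,dr=:C(M,d,\alpha)<\infty,
\]
so the near part contributes a constant uniformly in $x$. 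On the far region $\rho\ge e$, the elementary bound $G(t)\le C_\delta t^{1+\delta}$ together with $\log\rho(x,y)\le\log_+|x|+\log_+|y|+C$ (triangle inequality) separates the variables and reduces the outer integral to finite log-moments of $f$, and (A) with exponent $p$ is equivalent to the finiteness of the $p$-th log-moment $\int(\log_+|x|)^p f\,dx$. Choosing $\varepsilon_0$ and $\delta$ small, the outer integral is dominated by $\int(\log_+|x|)^{\alpha(1+\delta)(1+\varepsilon_0)}f\,dx$ up to lower-order $\log\log$ factors, which is finite once this exponent does not exceed $p$. The main obstacle is precisely this exponent bookkeeping: one must match the largest order $\alpha$ demanded by the variance computation of Theorem \ref{result II} against the available exponent $p>2$, through a joint choice of $\varepsilon_0$ and $\delta$; this is the only place where the strengthening from $p>1$ (as in Corollary \ref{corollary 1}) to $p>2$ is used.

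Finally, for the ``moreover'' assertion, a bounded support makes (A) automatic for every $p$: the near singularity of $|\log\rho|^p$ is integrable as above, while on the far region $\rho(x,y)=\parallel x-y\parallel$ is bounded by the diameter of the support, so $|\log\rho|$ is bounded there and the double integral in (A) is finite. Hence (B) and (C1) alone already supply all three functional bounds through the previous paragraphs, and Theorem \ref{result II} delivers the $L^2$-consistency.
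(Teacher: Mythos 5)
The paper itself gives no argument for Corollary \ref{corollary 2} beyond the sentence ``similar to Bulinski and Dimitrov (2019)'', so your strategy---verify that (A) with $p>2$, (B) and (C1) force $K_{f,\alpha}(\varepsilon_0)<\infty$ for $\alpha=1,\dots,4$, $Q_f(\varepsilon_1,R_1)<\infty$ and $T_f(\varepsilon_2,R_2)<\infty$, then invoke Theorem \ref{result II}---is exactly the intended reduction, and your treatment of $Q_f$ and of the ``moreover'' clause is correct. But two steps have genuine gaps. The first is the passage from (C1) to $T_f(\varepsilon_2,R_2)<\infty$ via a pointwise bound $m_f(x,R_2)\ge c\,f(x)$ justified by ``away from the boundary the ball lies in the support''. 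This does not work: $\mathcal{S}(f)=\{f>0\}$ is only a measurable set, it may have empty interior, and the ``boundary layer'' need not have small measure, so $B(x,r)\subset\mathcal{S}(f)$ can fail for every $x$ and every $r$. What one actually needs is control of $\inf_{r\le R}\mu\big(B(x,r)\cap\mathcal{S}(f)\big)/\mu\big(B(x,r)\big)$ raised to a negative power and integrated over $\mathcal{S}(f)$; this is the content of a dedicated covering-type lemma in Bulinski--Dimitrov and is where the real difficulty of the corollary sits. It is not harmless: for a support made of widely separated intervals of lengths $\ell_n$ with $\sum_n\ell_n<\infty$ but $\sum_n\ell_n^{1-\varepsilon}=\infty$, one has $m_f(x,R)\lesssim \ell_n/R$ on the $n$-th piece and hence $\int m_f^{-\varepsilon}f\gtrsim\sum_n\ell_n^{1-\varepsilon}=\infty$ for every $\varepsilon>0$, so some additional input beyond the literal statement of (C1) is indispensable here.

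The second gap is the exponent bookkeeping for $K_{f,\alpha}$, which you rightly call the main obstacle but never carry out. Following your own outline---$G(t)\le C_\delta t^{1+\delta}$, separation of variables, and Jensen's inequality applied to the outer power $1+\varepsilon_0$---one arrives at $K_{f,\alpha}(\varepsilon_0)\lesssim \int\int|\log\rho(x,y)|^{(\alpha+\delta)(1+\varepsilon_0)}f(x)f(y)\,dx\,dy$ plus a uniformly bounded near-diagonal contribution, so finiteness of $K_{f,4}(\varepsilon_0)$ demands condition (\ref{int log_rho power p}) with $p>4$, not $p>2$. The cases $\alpha=3,4$ are genuinely used in the proof of Theorem \ref{result II} (uniform integrability of $\log^4\xi_{N,x}$ and of the products $\log^2\eta^{y}_{N,x}\,\log^2\eta^{x}_{N,y}$), so you cannot discard them. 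As written, the assertion that ``a joint choice of $\varepsilon_0$ and $\delta$'' reconciles $\alpha=4$ with $p>2$ is unsupported, and on the natural estimate it is false; you would need either a substantially sharper bound on the inner integral of (\ref{condition K}) or a strengthened moment hypothesis to close this step.
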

\begin{corollary} \label{corollary 3}
If, for some positive $ \varepsilon,\,R$ and $c $, condition (C2) is true and (\ref{int f^(1-varepsilon)}) holds. Then in \textit{Theorems} \ref{result I} and \ref{result II}, we can employ (C2) and (\ref{int f^(1-varepsilon)}) instead of assumptions concerning $ T_{f}(\varepsilon , R) $. 
In particular, relations (\ref{Varentropy exp eq1})and (\ref{Varentropy var eq1}) hold for Gaussian random variable $ \mathcal{N}(\nu , \Sigma) $ in $\mathbb{R}^{d} $ with $ H=\frac{1}{2}\log \det(2\pi e\Sigma) $.
\end{corollary}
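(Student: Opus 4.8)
\emph{First assertion.} The plan for the first claim is immediate: it merely feeds (C2) into the definition of $T_f$, as in the remark preceding Corollary \ref{corollary 1}. Under (C2) one has $m_{f}(x,R)\ge c\,f(x)$ on $\mathcal S(f)$ by \eqref{m_f>cf}, hence $m_{f}^{-\varepsilon}(x,R)\le c^{-\varepsilon}f(x)^{-\varepsilon}$, and by \eqref{condition T} together with \eqref{int f^(1-varepsilon)},
\[
T_{f}(\varepsilon,R)=\int_{\mathbb R^{d}}m_{f}^{-\varepsilon}(x,R)\,f(x)\,dx\ \le\ c^{-\varepsilon}\int_{\mathbb R^{d}}f(x)^{1-\varepsilon}\,dx\ <\ \infty .
\]
Thus the finiteness of $T_{f}$ demanded in Theorems \ref{result I} and \ref{result II} is implied by (C2) and \eqref{int f^(1-varepsilon)}, leaving the hypotheses on $K_{f,\alpha}$ and $Q_{f}$ untouched; these two conditions may therefore be substituted for the $T_{f}$-assumption verbatim.

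\emph{The Gaussian case.} For $X\sim\mathcal N(\nu,\Sigma)$ the plan is to check the hypotheses of Theorems \ref{result I}--\ref{result II}, handling $K_{f,\alpha}$ and $Q_{f}$ through (A) and (B) exactly as in Corollaries \ref{corollary 1}--\ref{corollary 2}, and handling $T_{f}$ through (C2) and \eqref{int f^(1-varepsilon)} by the first assertion above; note that (C1), which Corollaries \ref{corollary 1}--\ref{corollary 2} use for the $T_{f}$ part, fails here, the Gaussian density not being bounded below on its support $\mathbb R^{d}$. Condition (B) is clear, since $f(x)\le(2\pi)^{-d/2}(\det\Sigma)^{-1/2}=:M$. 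For (A), i.e. \eqref{int log_rho power p}, I would split the double integral at $\rho(x,y)=1$: on $\{\rho\ge1\}$ the factor $(\log\rho)^{p}$ grows slower than any power of $|x|+|y|$ and is absorbed by the Gaussian tails of $f(x)f(y)$, while on $\{\rho<1\}$ one bounds $f(y)\le M$ and is left with $\int_{|u|<1}\big|\log|u|\big|^{p}\,du<\infty$ in $\mathbb R^{d}$; hence (A) holds for every $p\ge1$, in particular for the value $p>2$ required by the $L^{2}$-statement. Finally $\int_{\mathbb R^{d}}f^{1-\varepsilon}\,dx<\infty$ for any $0<\varepsilon<1$, since $f^{1-\varepsilon}$ is again a Gaussian kernel up to a constant.

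\emph{Verifying (C2) --- the main obstacle.} The one delicate point, which I expect to be the crux, is (C2) for the Gaussian: a naive bound $f(y)\ge f(x)\,e^{-\mathrm{const}\cdot|x|}$ on $B(x,r)$ yields a constant $c$ that degenerates as $|x|\to\infty$ and is useless. The plan is instead to symmetrize. Writing $w=x-\nu$ and $Q(u)=u^{\top}\Sigma^{-1}u$, a direct computation gives, for every $z$, the pointwise identity
\[
\tfrac12\big(f(x+z)+f(x-z)\big)=f(x)\,e^{-\frac12 Q(z)}\cosh\!\big(w^{\top}\Sigma^{-1}z\big)\ \ge\ f(x)\,e^{-\frac12 Q(z)},
\]
the inequality using $\cosh\ge1$; crucially, the entire dependence on the location $x$ sits in the factor $\cosh(w^{\top}\Sigma^{-1}z)$, which symmetrization renders harmless. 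Since $\int_{B(x,r)}f(y)\,dy=\int_{|z|<r}\tfrac12\big(f(x+z)+f(x-z)\big)\,dz$ by the symmetry $z\mapsto-z$ of the ball, and since $Q(z)\le|z|^{2}/\lambda_{\min}(\Sigma)\le R^{2}/\lambda_{\min}(\Sigma)$ whenever $|z|<r\le R$, we obtain
\[
I_{f}(x,r)=\frac{1}{r^{d}V_{d}}\int_{B(x,r)}f(y)\,dy\ \ge\ f(x)\,e^{-R^{2}/(2\lambda_{\min}(\Sigma))}
\]
uniformly in $r\in(0,R]$, whence $m_{f}(x,R)\ge c\,f(x)$ with $c=e^{-R^{2}/(2\lambda_{\min}(\Sigma))}>0$, which is precisely (C2). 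With (A), (B), (C2) and \eqref{int f^(1-varepsilon)} all in force, the first assertion together with Corollaries \ref{corollary 1} and \ref{corollary 2} yield \eqref{Varentropy exp eq1} and \eqref{Varentropy var eq1}; the estimated quantity is the Gaussian varentropy $V=d/2$, while the associated entropy takes its classical value $H=\tfrac12\log\det(2\pi e\Sigma)$.
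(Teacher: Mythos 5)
Your proposal is correct, and it is considerably more explicit than what the paper provides: the paper's entire justification for Corollary~\ref{corollary 3} is the one-line deferral ``similar to Bulinski and Dimitrov (2019)'', together with the remark preceding Corollary~\ref{corollary 1} that (C2) and \eqref{int f^(1-varepsilon)} imply $T_{f}(\varepsilon,R)<\infty$. Your first assertion reproduces exactly that remark's intended argument (pointwise bound $m_{f}^{-\varepsilon}(x,R)f(x)\le c^{-\varepsilon}f(x)^{1-\varepsilon}$ on $\mathcal S(f)$, then integrate), so there is nothing to compare there. The substantive addition is your verification of (C2) for the Gaussian via the symmetrization identity $\tfrac12\big(f(x+z)+f(x-z)\big)=f(x)e^{-Q(z)/2}\cosh(w^{\top}\Sigma^{-1}z)\ge f(x)e^{-Q(z)/2}$, which kills the location-dependent cross term that defeats the naive one-sided bound and yields the uniform constant $c=e^{-R^{2}/(2\lambda_{\min}(\Sigma))}$; this is precisely the step the paper leaves implicit, and your treatment of it is sound, as are the routine checks of (A) (split at $\rho=1$, using $\int_{|u|<1}|\log|u||^{p}\,du<\infty$), (B), and $\int f^{1-\varepsilon}<\infty$. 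You also correctly identify why (C1) cannot be used here and why Corollary~\ref{corollary 3} is needed at all. The only cosmetic caveat is that on $\mathbb R^{d}\setminus\mathcal S(f)$ the integrand of \eqref{condition T} should be read as $0$ (the Gaussian has full support, so this is moot in the application), and that the final identification $V=d/2$ is the paper's stated value from the introduction rather than something the corollary asks you to recompute.
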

The proofs of \textit{Corollaries} 1-3 are similar to Bulinski and Dimitrov (2019).
\section{Proof of Theorem 1}\label{section Proof of Theorem 1}
According to the Lebesgue differentiation theorem, if $ f\in L^{1}(\mathbb{R}^{d}) $ then, for almost all $ x\in\mathbb{R}^{d} $ with respect to Lebesgue measure, the following result holds.
\begin{align} \label{limit fy-fx}
\lim\limits_{r\rightarrow 0+}\dfrac{1}{|B(x,r)|}\int_{B(x,r)}|f(y)-f(x)|dy=0
\end{align}
Let $ \Lambda(f) $ stands for a set of all the Lebesgue points of a density f, i.e. for $ x\in\mathbb{R}^{d} $ satisfying Eq.(\ref{limit fy-fx}).
Clearly $ \Lambda(f) $ depends on the chosen version of $ f $ belonging to the class of equivalent functions
from $ L^{1}(\mathbb{R}^{d}) $. For each version of $ f $, we have that Lebesgue measure of the set $ \{\mathbb{R}^{d}\setminus\Lambda(f)\} $ is equal to zero.

Note that the random variables $ \zeta_{1}(N),\ldots , \zeta_{N}(N) $ are identically distributed since $ X_{1},\ldots , X_{N} $ are i.i.d. random vectors. Thus, $ \mathsf{E}H_{N}=\mathsf{E}[\zeta_{1}(N)] $. Using Eq.(\ref{limit fy-fx}), one can prove that under the conditions of \textit{Theorem} \ref{result I}, $ \mathsf{E}_{}(V_{N})\rightarrow V,\, N\rightarrow\infty $. We recall the main steps of this proof (see, Kozachenko-Leonenko 1987 and Bulinski and Dimitrov 2019 for details).\\
First, for $ x\in\mathbb{R}^{d} $ and $ u>0 $, as $ N\longrightarrow \infty $
\begin{align} \label{F_Nx(u) limit}
F_{N,x}(u)=&\mathsf{Pr}\big(e^{\zeta_{1}(N)}\leq u \big|X_{1}=x\big)=\mathsf{Pr}(\xi_{N,x}\leq u)   \nonumber\\
\rightarrow &\mathsf{Pr}(\xi_{x}\leq u)=F_{x}(u)  =1-\exp\big\{-u\cdot f(x)\,e^{-\gamma}\big\}\,,
\end{align}
where $\xi_{x}  $ is an exponential random variable with the mean $ e^{\gamma}/f(x) $. Relation (\ref{F_Nx(u) limit}) means that 
\begin{align}\label{xi limit}
\xi_{N,x}=(N-1)V_{d}\,e^{\gamma}\min\limits_{j=2,\ldots ,N}\rho^{d}(x, X_{j})\,\xrightarrow[]{\text{law}}\,\xi_{x} \in\mathcal{A}=\mathcal{S}(f)\cap \Lambda(f),\,N\rightarrow\infty\,.
\end{align}
Second, for almost every $ x\in\mathcal{S}(f) $ as $ N\rightarrow\infty $
\begin{align} \label{log_xi limit}
 \mathsf{E}\big(\log\xi_{N,x}\big)\rightarrow \mathsf{E}\big(\log\xi_{x}\big)=-\log f(x)\,.
\end{align}
Last, for almost all $ x\in\mathcal{S}(f) $, the family of $\{\log\xi_{N,x}\}_{N\geq N_{0}(x)} $ is uniformally integrable. That is for a function $ G $ defined in Eq.(\ref{G(t)})
\begin{eqnarray} 
\sup\limits_{N\geq N_{0}(x)} \mathsf{E}G(\mid \log\xi_{N,x}\mid)\leq C_{0}(x)<\infty  \label{uniform integrability} 
\end{eqnarray}
for almost every $ x\in\mathcal{S}(f) $, and hence
\begin{align}\label{HN zeta X1=x}
\mathsf{E}_{}\big(H_{N}\big)=\mathsf{E}_{}\big[\zeta_{1}(N)\big]=&\mathsf{E}_{}\big[\mathsf{E}_{}\big(\zeta_{1}(N)\big | X_{1}=x\big)\big]        \nonumber\\
=&\int_{\mathbb{R}^{d}}\bigg[\int_{-\infty}^{\infty} w \, d\mathsf{P}\big({\zeta_{1}(N)}\leq w\big |{X_{1}=x}\big)\bigg]f(x)dx \nonumber\\
=& \int_{\mathbb{R}^{d}}\bigg[\int_{0}^{\infty}(\log u) d\mathsf{P}(\xi_{N,x}\leq u)\bigg]f(x)dx  \nonumber\\
\rightarrow &\int_{\mathbb{R}^{d}}\big[\mathsf{E}_{}\big(\log\xi_{x}\big)\big]f(x)dx,\quad\text{as}\,\, N\rightarrow\infty \nonumber\\
=& \int_{\mathbb{R}^{d}}\big[-\log f(x)\big]f(x)dx=H \,.
\end{align}
We have to prove that $ \mathsf{E}_{}(V_{N})\rightarrow V$, $ N\rightarrow\infty $ in Eq.(\ref{Varentropy exp eq1}), where $ V_{N} $ is defined by Eq.(\ref{V_N}).
From Kozachenko-Leonenko (1987) and Bulinski and Dimitrov (2019), we has
\begin{align}
\mathsf{E}_{}(H_{N}-H)\rightarrow 0,\,\,\mathsf{E}_{}(H_{N}-H)^{2}\rightarrow 0,\,\,\text{as}\,\, N\rightarrow\infty \,.\nonumber 
\end{align}
Under conditions of \textit{Theorem} \ref{result I}. Thus, $\lim\limits_{N\rightarrow\infty} \mathsf{E}_{} (H^{2}_{N})= H^{2} $ and for Eq.(\ref{Varentropy exp eq1}), we need to prove that
\begin{align}\label{S_N limit}
\mathsf{E}_{} (S^{2}_{N}) \rightarrow \mathsf{E}_{}\big[\log^{2} f(x)\big],\quad N\rightarrow\infty\,,
\end{align}
which follows from the following statement.
\begin{lemma}\label{result I.1}
Assume that, for some positive $ \varepsilon_{i},\, i= 0,1,2 $ and $ R_{j},\, j=1,2 $, the functionals $ K_{f,\,{\color{black} \alpha}}(\varepsilon_{0})<\infty,\,{\color{black} \alpha=1,2}$,\,$ Q_{f}(\varepsilon_{1},R_{1})<\infty $ and $ T_{f}(\varepsilon_{2},R_{2})<\infty $, then 
\begin{equation}\label{SN limit}
\lim\limits_{N\rightarrow\infty} \mathsf{E}_{} (S^{2}_{N})=\int_{}\big[\log f(x)\big]^{2}\,f(x)dx \,.
\end{equation}
\end{lemma}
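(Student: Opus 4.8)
The plan is to mirror the scheme already carried out for $\mathsf{E}(H_N)$, reducing the claim to a single pointwise limit under an integral sign. Since $X_1,\ldots,X_N$ are i.i.d., the variables $\zeta_1(N),\ldots,\zeta_N(N)$ are identically distributed, so from Eq.(\ref{S_N}) we have $\mathsf{E}(S_N^2)=\mathsf{E}[\zeta_1^2(N)]-\pi^2/6$. Conditioning on $X_1=x$ and writing $e^{\zeta_1(N)}=\xi_{N,x}$ as in Eq.(\ref{xi limit}) gives
\[
\mathsf{E}(S_N^2)=\int_{\mathbb{R}^d}\mathsf{E}\big[(\log\xi_{N,x})^2\big]\,f(x)\,dx-\frac{\pi^2}{6}.
\]
It therefore suffices to show that the inner expectation converges to $(\log f(x))^2+\pi^2/6$ for almost every $x$, and that this limit may be taken under the $x$-integral; subtracting $\pi^2/6$ then yields Eq.(\ref{SN limit}).

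For the pointwise limit I would start from Eq.(\ref{xi limit}), which gives $\xi_{N,x}\xrightarrow[]{\text{law}}\xi_x$ with $\xi_x$ exponential of mean $e^\gamma/f(x)$ for $x\in\mathcal{A}=\mathcal{S}(f)\cap\Lambda(f)$. Writing $\xi_x=(e^\gamma/f(x))E$ with $E$ standard exponential and using $\mathsf{E}(\log E)=-\gamma$ together with Eq.(\ref{pi^2/6}), which gives $\mathsf{E}[(\log E)^2]=\gamma^2+\pi^2/6$, one finds after the $\gamma$-terms cancel that
\[
\mathsf{E}\big[(\log\xi_x)^2\big]=\big(\log f(x)\big)^2+\frac{\pi^2}{6}.
\]
To upgrade convergence in law to convergence of second moments, I would establish that the family $\{(\log\xi_{N,x})^2\}_{N\ge N_0(x)}$ is uniformly integrable for almost every $x$, the squared analogue of Eq.(\ref{uniform integrability}). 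Because $\zeta_1(N)=d\log\rho_1+\log(V_d e^\gamma(N-1))$ by Eq.(\ref{zeta_i N}), the square $\zeta_1^2(N)$ produces both a $(\log\rho_1)^2$ term and a $\log\rho_1$ term, which is exactly why control of $G(|\log^\alpha\rho|)$ for \emph{both} $\alpha=1$ and $\alpha=2$ is needed here, in contrast with the first-moment argument for $\mathsf{E}(H_N)$.

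The main obstacle is justifying the interchange of limit and integration over $x$, and this is where I would follow Bulinski and Dimitrov (2019). The maximal-function conditions $Q_f(\varepsilon_1,R_1)<\infty$ and $T_f(\varepsilon_2,R_2)<\infty$ supply, uniformly in $N$, integrable upper and lower envelopes for $\xi_{N,x}$ through $M_f(x,R_1)$ and $m_f(x,R_2)$, thereby dominating the tails of $x\mapsto\mathsf{E}[(\log\xi_{N,x})^2]$, while $K_{f,\alpha}(\varepsilon_0)<\infty$ for $\alpha=1,2$ furnishes the $(1+\varepsilon_0)$-moment control on the truncated logarithmic contributions after squaring. Combining the pointwise limit with these uniform bounds via dominated convergence yields
\[
\int_{\mathbb{R}^d}\mathsf{E}\big[(\log\xi_{N,x})^2\big]\,f(x)\,dx\;\longrightarrow\;\int_{\mathbb{R}^d}\Big[\big(\log f(x)\big)^2+\frac{\pi^2}{6}\Big]f(x)\,dx,
\]
and subtracting the constant $\pi^2/6$ gives Eq.(\ref{SN limit}). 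The delicate point throughout is matching the exponents in the maximal-function envelopes to the power of the logarithm that survives after squaring $\zeta_1(N)$, which is the precise reason the second-moment statement requires $\alpha=1,2$ rather than $\alpha=1$ alone.
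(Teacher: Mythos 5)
Your proposal is correct and follows essentially the same route as the paper: reduce $\mathsf{E}(S_N^2)$ to the conditional second moment $\mathsf{E}[\log^2\xi_{N,x}]$, obtain the pointwise limit $(\log f(x))^2+\pi^2/6$ from $\xi_{N,x}\xrightarrow{\text{law}}\xi_x$ plus the Gumbel/exponential moment identity, and pass the limit through the $x$-integral via uniform integrability controlled by the $G$-function bounds and the functionals $K_{f,\alpha}$, $Q_f$, $T_f$ of Bulinski and Dimitrov (2019). The only cosmetic difference is that the paper invokes their Theorem 2.8 directly for the bound $\sup_N \mathsf{E}G(|\log^2\xi_{N,x}|)\le C_0(x)$, whereas you sketch where each hypothesis enters; the substance is identical.
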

\subsection{Proof of \textbf{Lemma} \ref{result I.1}}
The random variables $ \zeta_{1}^{2}(N),\ldots , \zeta_{N}^{2}(N) $ are identically distributed, and hence,
\begin{eqnarray}\label{SN zeta X1=x}
&\mathsf{E}_{}\big( S_{N}^{2}\big)=\mathsf{E}_{x}\big[\zeta_{1}^{2}(N)\big]-\dfrac{\pi^{2}}{6} \,.
\end{eqnarray}
\begin{eqnarray} \label{S_N,x def}
&&\text{Let}\,\,\, S^{2}_{N,x}=\bigg\{\zeta^{2}_{1}(N)-\frac{\pi^{2}}{6}\,\bigg | X_{1}=x\bigg\}=\log^{2}\xi_{N,x}-\frac{\pi^{2}}{6}\,,  \\
&&\text{then}\,\,\, \big\{\zeta^{2}_{1}(N)\,\big | X_{1}=x\big\}=\log^{2}\xi_{N,x}\,,\,\text{where $ \xi_{N,x} $ is defined in Eq.(\ref{xi limit}).}  \nonumber
\end{eqnarray}
Note that Eq.(\ref{SN limit}) follows from one of the following equivalent statements as $ N\rightarrow \infty $.
\begin{eqnarray}
&& \mathsf{E}_{}\big[\zeta_{1}^{2}(N)\big]-\frac{\pi^{2}}{6}\,\xrightarrow[\text{}]{}\, \mathsf{E}_{}\big[\log ^{2}f(x)\big] \,, \\
&& \mathsf{E}_{}\Big\{\mathsf{E}_{}\big[\big(\zeta^{2}_{1}(N)-\dfrac{\pi^{2}}{6}\big)\big | X_{1}=x\big]\Big\} \,\xrightarrow[\text{}]{}\, \mathsf{E}_{}\big[\log ^{2}f(x)\big]\,, \label{E_x+E_zeta power2} \\
&& \mathsf{E}_{}\big[\mathsf{E}_{}\big(S^{2}_{N,x}\big)\big] \,\xrightarrow[\text{}]{N\rightarrow \infty}\, \mathsf{E}_{}\big[\log ^{2}f(x)\big] \,. \label{E_x+E_zeta power2 eq2}
\end{eqnarray}
Note that Eq.(\ref{E_x+E_zeta power2}) holds true if we prove that
\begin{enumerate}[(A)]
\item  $\mathsf{E}\big(S_{N,x}\big)-\dfrac{\pi^{2}}{6}\,\xrightarrow[]{}\, \log^{2} f(x),\quad N\rightarrow\infty $.
\item The family $ \big\{S_{N,x}\big\}_{N\geq N_{0}} $ with $ N_{0}\in\mathbb{N} $ is uniformly integrable.
\end{enumerate}
From \textit{Theorem} 2.8 of Bulinski and Dimitrov (2019), we obtain
\begin{eqnarray}\label{uniform integrability 1.1}
\sup_{N\geq N_{0(x)}} \mathsf{E} G \big[\mid \log^{2}\xi_{N,x}\mid\big]\leq C_{0}(x)<\infty 
\end{eqnarray}
and since $ \xi_{N,x} \xrightarrow[\text{}]{\text{law}}\xi_{x}$,  $ x\in \mathcal{A}$, $N\rightarrow \infty $ and convergence in law of random variable is preserved under continuous mapping, we have 
\begin{align} \label{convergence log_xi}
\log^{2}\xi_{N,x}\,\xrightarrow[]{\text{law}}\, \log^{2}\xi_{x}\,,\qquad N\rightarrow\infty\,.
\end{align}
The statement of (A) is proved by Eqs.(\ref{uniform integrability 1.1}) and (\ref{convergence log_xi}). The statement (B) follows from Eq.(\ref{uniform integrability 1.1}), see the Eq.(4.2) on page 32 of Bulinski and Dimitrov (2019) for details.
\section{Proof of \text{Theorem} \ref{result II}}\label{section Proof of Theorem 2}
We show a stronger statement that the $ L^{2} $-consistence of $ V_{N}^{2} $ is guaranteed by $ \mathsf{Var}(S^{2}_{N})\xrightarrow[\text{}]{}\,  0$ and $ \mathsf{Var}(H^{2}_{N}) \,\xrightarrow[\text{}]{} 0$, as $ N\rightarrow \infty$.
\subsection{A Stronger Statement}
As $ N\rightarrow\infty $, we have
\begin{eqnarray}\label{Varentropy var eq2}
&&\hspace{-1.5cm} \mathsf{E}(V_{N}-V)^{2}=\mathsf{E}(V_{N}-\mathsf{E}V_{N})^{2}+R_{N}^{(0)},\quad R_{N}^{(0)}=\big(\mathsf{E}V_{N}-V\big)^{2}\,\xrightarrow[\text{}]{}\, 0\,.
\end{eqnarray}
Moreover,
\begin{eqnarray}
\mathsf{E}(V_{N}-\mathsf{E}V_{N})^{2}&=&\mathsf{E}\big[(S^{2}_{N}-\mathsf{E}S^{2}_{N})- (H_{N}^{2}-\mathsf{E} H_{N}^{2})\big]^{2}  \nonumber\\
&=&\mathsf{Var}(S^{2}_{N})+\mathsf{Var}(H^{2}_{N})-2\mathsf{E}(S^{2}_{N}-\mathsf{E}S^{2}_{N})(H_{N}^{2}-\mathsf{E} H_{N}^{2}) \,,\label{EV_N-V power2}
\end{eqnarray}
where by the Jensen's inequality, we have
\begin{eqnarray}\label{Jensen E_SH power2}
-\mathsf{E}[(S^{2}_{N}-\mathsf{E}S^{2}_{N})(H_{N}^{2}-\mathsf{E} H_{N}^{2})]&\leq &\big|\mathsf{E}[(S^{2}_{N}-\mathsf{E}S^{2}_{N})(H_{N}^{2}-\mathsf{E} H_{N}^{2})]\big| \nonumber\\
&\leq & \mathsf {E} [|(S^{2}_{N}-\mathsf{E}S^{2}_{N})(H_{N}^{2}-\mathsf{E} H_{N}^{2})|]\,.
\end{eqnarray}
From the H\"older's inequality, we have
\begin{eqnarray}\label{Holder E_SH power2}
\mathsf {E}\big|(S^{2}_{N}-\mathsf{E}S^{2}_{N})(H_{N}^{2}-\mathsf{E} H_{N}^{2})\big| &\leq & \sqrt{\mathsf{Var}(S^{2}_{N})}\,\sqrt{\mathsf{Var}(H^{2}_{N})}\,.
\end{eqnarray} 
From the Eqs.(\ref{EV_N-V power2}), (\ref{Jensen E_SH power2}) and (\ref{Holder E_SH power2}), we obtain
\begin{eqnarray}
\mathsf{E}(V_{N}-\mathsf{E}V_{N})^{2}
&\leq &\bigg[ \sqrt{\mathsf{Var}(S^{2}_{N})}+\sqrt{\mathsf{Var}(H^{2}_{N})}\bigg]^{2} \,.\label{EV_N-V power2 eq2}
\end{eqnarray}
Therefore, the $ L^{2} $-consistence in Eq.(\ref{Varentropy var eq1}) follows from the following two statements.
\begin{eqnarray} \label{Varentropy var eq3}
 \mathsf{Var}(S^{2}_{N}) \,\xrightarrow[\text{}]{}\, 0 \,,\quad \mathsf{Var}(H^{2}_{N}) \,\xrightarrow[\text{}]{}\, 0,\quad\text{as}\,\, N\rightarrow \infty\,.
\end{eqnarray}
Thus, proof of \textit{Theorem} \ref{result II} is guaranteed by the statement in Eq.(\ref{Varentropy var eq3}). 
\subsection{Proof of $\mathsf{Var}(S^{2}_{N}) \,\xrightarrow[\text{}]{}\, 0  $ }
Let us introduce random variables $ \tilde{\zeta}_{i}^{2}(N)=\zeta_{i}^{2}(N)-\frac{\pi^{2}}{6} $, $ i=1,\ldots , N $. Then
\begin{eqnarray}\label{SNx expansion}
\mathsf{Var}\big(S_{N}^{2}\big)&=&\mathsf{Var}\bigg[ \dfrac{1}{N}\sum\limits_{i=1}^{N}\tilde{\zeta}_{i}^{2}(N)\bigg] \nonumber\\
&=&\dfrac{1}{N}\mathsf{Var}\big[ \tilde{\zeta}_{1}^{2}(N)\big] +\dfrac{2}{N^{2}} \sum\limits_{1\leq i < j\leq N}^{N}\mathsf{Cov}\big[ \tilde{\zeta}_{i}^{2}(N)\,,\,\tilde{\zeta}_{j}^{2}(N)\big]\,.
\end{eqnarray}
We analyse the first and second terms in Eq.(\ref{SNx expansion}) seperately.
\subsubsection{Boundedness of $ \mathsf{Var}\big[ \tilde{\zeta}_{1}^{2}(N)\big] $}
We will prove that
\begin{align}\label{Varentropy var eq1}
\lim\limits_{N\rightarrow\infty}\dfrac{1}{N}\mathsf{Var}\big[ \tilde{\zeta}_{1}^{2}(N)\big]=0\,.
\end{align}
Note that 
\begin{eqnarray}
&&\mathsf{Var}\big[ \tilde{\zeta}_{1}^{2}(N)\big]=\mathsf{E}\big[ \tilde{\zeta}_{1}^{4}(N)\big]-\mathsf{E}\big[ \tilde{\zeta}_{1}^{2}(N)\big]^{2}    \label{zeta tilde var 2}\\
&&\hspace{-.6cm}\text{and}\,\,\,\lim\limits_{N\rightarrow\infty}\mathsf{E}\big[ \tilde{\zeta}_{1}(N)\big]^{2}=\log^{4} f(x) \label{zeta tilde var 2.2}\,.
\end{eqnarray}
Using  $ \tilde{\zeta}_{1}^{4}(N)=\zeta_{1}^{4}(N)-\frac{\pi^{2}}{3}\zeta_{1}^{2}(N)+\frac{\pi^{4}}{36}  $, we have
\begin{eqnarray}\label{zeta tilde power 4}
\mathsf{E}\big[ \tilde{\zeta}_{1}^{4}(N)\big |X_{1}=x\big]
&=&\mathsf{E}(\log^{4}\xi_{N,x})-\dfrac{\pi^{2}}{3} \mathsf{E}(\log^{2}\xi_{N,x})+\dfrac{\pi^{4}}{36} \,,
\end{eqnarray}
where $ \log\xi_{N,x} $ is defined in Eq.(\ref{xi limit}). By the \text{Law of total expectation}, we have 
\begin{eqnarray}
\mathsf{E}\big[ \tilde{\zeta}_{1}^{4}(N)\big]=\mathsf{E}\Big\{\mathsf{E}\big[ \tilde{\zeta}_{1}^{4}(N)\big |X_{1}=x\big]\Big\}
&=& \int_{\mathcal{A}}\mathsf{E}\big(\log^{4}\xi_{N,x}\big) f(x)dx \nonumber\\
&\leq & \max\Big\{\mathsf{E}\big(\log^{4}\xi_{N,x}\big)\Big\}\int_{\mathcal{A}}f(x)dx \nonumber\\
&=& \max_{x\in\mathcal{A}}\Big\{\mathsf{E}\big(\log^{4}\xi_{N,x}\big)\Big\}\,.
\end{eqnarray}
We recall that the random variable $ \xi_{x} $ defined in Eq.(\ref{xi limit}) has an exponential distribution with the mean $ e^{\gamma}/f(x)=\lambda^{-1}>0 $, and hence the random variable $ \eta=\log\xi_{x}  $ follows a Gumbel distribution with location parameter $\alpha=-\log\lambda $ and the scale parameter $ \beta=1 $, and hence
\begin{align}
\mathsf{E}\big(\eta^{2}\big)&=\mathsf{E}(\log^{2}\xi_{x})=\log^{2} f(x)+\frac{\pi^{2}}{6},          \\
\text{where}\quad \frac{\pi^{2}}{6}&=\int^{\infty}_{0}\log^{2}t\, e^{-t}dt - \Big(\int^{\infty}_{0}\log t\, e^{-t}dt\Big)^{2} \,.
\end{align}
It follows from the properties of Gumbel distribution that $ \mathsf{E}(\log^{n}\xi_{x})<\infty $ for $ n=1,2,3,4 $. It is sufficient to show that $ \mathsf{E}[\zeta_{1}^{4}(N)] <\infty $ or $ \mathsf{E}(\log^{4}\xi_{N,x}) <\infty $, and to prove that
\begin{align}
\lim\limits_{N\rightarrow\infty}\mathsf{E}(\log^{4}\xi_{N,x})=\mathsf{E}(\log^{4}\xi_{x}) <\infty    \nonumber
\end{align}
by using two steps:
\begin{enumerate}[(1)]
\item $ \log^{4}\xi_{N,x}\xrightarrow[\text{}]{\text{law}}\,\log^{4}\xi_{x}\,,\quad x\in\mathcal{A}\,,\,N\rightarrow \infty $.
\item family $ \big\{\log^{4}\xi_{N,x}\big\}_{N\geq N_{0}(x)} $ is uniformly integrable.
\end{enumerate}
Note that convergence in law follows from Eq.(\ref{xi limit}), since
\begin{eqnarray}
\log^{4}\xi_{N,x} \xrightarrow[\text{}]{law}\,\log^{4}\xi_{x}\,,\quad N\rightarrow \infty \,.
\end{eqnarray}
From the de la Valle Poussin theorem on the uniform integrability of $ \big\{\big(\log\xi_{N,x}\big)^{4}\big\}_{N\geq N_{0}(x)} $ 
for step (2), we need to prove the following: for almost all $ x\in \mathcal{S}(f) $, there is a positive $ C_{0}(x) $  and $ N_{0}(x)\geq 1 $, such that
\begin{eqnarray}\label{uniform integrability variance power 4}
\sup\limits_{N\geq N_{0}(x)} \mathsf{E}G(\mid \log^{4}\xi_{N,x}\mid)\leq C_{0}(x)<\infty  \,,
\end{eqnarray}
where $ G(t) $ is defined in Eq.(\ref{G(t)}) and is an monotonically increasing function on $ [0,\infty) $, such that $ {G(t)}/{t} \rightarrow \infty $ as $ t\rightarrow \infty $.
\begin{lemma} \label{lemma power 3}
Let $ F(u) $, $ u\in\mathbb{R} $ be a cumulative distribution function such that $ F(0)=0 $. Then
\begin{eqnarray}
&&\hspace{-1.5cm}(1)\,\,\int\limits_{(0,\frac{1}{e}]}(-\log^{3} u)\,\log (-\log u) d F(u)=3\int\limits_{(0,\frac{1}{e}]}F(u) \dfrac{\log^{2} u}{u}\bigg[\log(-\log u)+\dfrac{1}{3}\bigg]du \,,\\
&&\hspace{-1.5cm}(2)\,\,\int\limits_{[e,\infty)}(\log^{3} u)\,\log (\log u) d F(u)=3\int\limits_{[e,\infty)}[1-F(u)] \dfrac{\log^{2} u}{u}\bigg[\log(\log u)+\dfrac{1}{3}\bigg]du  \,.
\end{eqnarray}
\end{lemma}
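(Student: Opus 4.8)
The plan is to derive both identities from Riemann--Stieltjes integration by parts, after rewriting each integrand as a single power-times-logarithm via a logarithmic change of variable. The only genuine subtlety will be the vanishing of the boundary terms.

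For part (1) I would note that on $(0,1/e]$ we have $\log u\le -1$, so with $v=-\log u\ge 1$ the integrand collapses to $g(u)=(-\log^{3}u)\log(-\log u)=v^{3}\log v$. Since $dv/du=-1/u$, the product rule gives
\begin{align*}
\frac{d}{du}\big[(-\log^{3}u)\log(-\log u)\big]=-3\,\frac{\log^{2}u}{u}\Big[\log(-\log u)+\tfrac{1}{3}\Big].
\end{align*}
Applying $\int_{(0,1/e]}g\,dF=\big[gF\big]_{0+}^{1/e}-\int_{(0,1/e]}F\,dg$ and substituting this derivative reproduces the right-hand side of (1) verbatim, once the boundary term is shown to vanish.

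For part (2) the interval $[e,\infty)$ gives $\log u\ge 1$, and the substitution $w=\log u\ge 1$ turns the integrand into $w^{3}\log w$; with $dw/du=1/u$ the identical product-rule computation yields $dh=3\,\frac{\log^{2}u}{u}\big[\log(\log u)+\tfrac{1}{3}\big]\,du$. Because the integration now runs over a tail, I would integrate by parts against the survival function $1-F$ rather than $F$: writing $dF=-d(1-F)$ gives $\int_{[e,\infty)}h\,dF=-\big[(1-F)h\big]_{e}^{\infty}+\int_{[e,\infty)}(1-F)\,dh$, and substituting $dh$ produces the right-hand side of (2).

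The step I expect to be the main obstacle is controlling the boundary terms. At the finite endpoints $u=1/e$ and $u=e$ they vanish trivially, since $\log(-\log u)=\log 1=0$ and $\log(\log u)=\log 1=0$ there, killing $g$ and $h$. The delicate ends are $u\to 0+$ in (1) and $u\to\infty$ in (2), where $g$ and $h$ diverge at the rate $(\log u)^{3}\log|\log u|$; here one must verify $g(u)F(u)\to 0$ and $(1-F(u))\,h(u)\to 0$. I would justify these limits from the finiteness of the integrals themselves, which in the intended application is supplied by the integrability hypotheses $K_{f,\alpha}(\varepsilon_{0})<\infty$ for $\alpha=1,2,3,4$ that guarantee the fourth-order logarithmic moments are finite.
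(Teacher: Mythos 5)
Your proposal is correct and follows essentially the same route as the paper's Appendix A.1: Riemann--Stieltjes integration by parts, the identical product-rule computation of $d\big[(-\log u)^{3}\log(-\log u)\big]$ and $d\big[(\log u)^{3}\log(\log u)\big]$, and vanishing boundary terms at $u=1/e$ and $u=e$ because $\log(-\log u)$, respectively $\log(\log u)$, is zero there. If anything you are more careful than the paper at the delicate endpoints, where the paper simply invokes $F(0)=0$ and $1-F(\infty)=0$ while you correctly observe that one must check $F(u)\,g(u)\to 0$ as $u\to 0+$ and $(1-F(u))\,h(u)\to 0$ as $u\to\infty$, which holds under the finiteness of the relevant logarithmic moments assumed in the application.
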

\begin{proof}[Proof of Lemma \ref{lemma power 3}] Provided in the Appendix A.1.
\end{proof}
\begin{lemma}\label{lemma power 4}
For $ u\in\mathbb{R} $, let $ F(u) $ be a cumulative distribution function and $ F(0)=0 $. Then
\begin{eqnarray}
&&\hspace{-1.5cm}(1)\,\,\int\limits_{(0,\frac{1}{e}]}(\log^{4} u)\,\log (-\log u) d F(u)=4\int\limits_{(0,\frac{1}{e}]}F(u) \dfrac{-\log^{3} u}{u}\bigg[\log(-\log u)+\dfrac{1}{4}\bigg]du \,,\\
&&\hspace{-1.5cm}(2)\,\,\int\limits_{[e,\infty)}(\log^{4} u)\,\log (\log u) d F(u)=4\int\limits_{[e,\infty)}[1-F(u)] \dfrac{\log^{3} u}{u}\bigg[\log(\log u)+\dfrac{1}{4}\bigg]du  \,.
\end{eqnarray}
\end{lemma}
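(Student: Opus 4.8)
The plan is to prove both identities by Riemann--Stieltjes integration by parts, exactly paralleling the proof of Lemma \ref{lemma power 3}, the only change being that the power $3$ is replaced by $4$. The crux is the derivative computation. On each domain set $g(u)=(\log^{4}u)\log(|\log u|)$ and substitute $v=\log u$, so that $g=v^{4}\log|v|$ and $dv/du=1/u$. Since $\frac{d}{dv}\log|v|=1/v$, one gets $dg/dv=4v^{3}\log|v|+v^{3}=v^{3}\,[\,4\log|v|+1\,]$, hence
\[
g'(u)=\frac{\log^{3}u}{u}\bigl[\,4\log|\log u|+1\,\bigr]
      =4\,\frac{\log^{3}u}{u}\Bigl[\log|\log u|+\tfrac14\Bigr].
\]
This already exhibits the factor $4$ and the constant $\tfrac14$ appearing on the right-hand sides of (1) and (2), which is the reason the statement takes the form it does.

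For part (1) I would work on $(0,\tfrac1e]$, where $\log u<0$. Note $g(\tfrac1e)=0$ because $\log(-\log\tfrac1e)=\log 1=0$, so $g(u)=\int_{u}^{1/e}(-g'(t))\,dt$. Substituting this into $\int_{(0,1/e]}g\,dF$ and swapping the order of integration by Tonelli gives
\[
\int_{(0,1/e]}g(u)\,dF(u)=\int_{(0,1/e]}(-g'(t))\Bigl(\int_{(0,t]}dF(u)\Bigr)dt
=\int_{(0,1/e]}(-g'(t))\,F(t)\,dt,
\]
using $F(0)=0$ for the inner integral. Since $-g'(t)=4\,\frac{-\log^{3}t}{t}\bigl[\log(-\log t)+\tfrac14\bigr]$, this is exactly the claimed right-hand side. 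Part (2) is symmetric: on $[e,\infty)$ one has $\log u>0$ and $g(e)=0$, so $g(u)=\int_{e}^{u}g'(t)\,dt$; inserting this and applying Tonelli yields $\int_{[e,\infty)}g\,dF=\int_{[e,\infty)}g'(t)\bigl(\int_{[t,\infty)}dF(u)\bigr)dt=\int_{[e,\infty)}g'(t)\,[1-F(t)]\,dt$, which is the asserted identity.

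The only point needing care, and the step I expect to be the main obstacle, is the justification of the Tonelli interchange (equivalently, the vanishing of the boundary terms at $0$ and at $\infty$ in the ordinary integration-by-parts formula). Here the sign structure does the work for us: on $(0,\tfrac1e]$ we have $-\log^{3}t>0$ and $4\log(-\log t)+1\ge 1>0$, while on $[e,\infty)$ we have $\log^{3}t>0$ and $4\log(\log t)+1\ge 1>0$, so the integrand $(-g'(t))F(t)$ (respectively $g'(t)[1-F(t)]$) is nonnegative throughout, and Tonelli's theorem applies without any extra integrability hypothesis; the identities then hold as equalities in $[0,\infty]$. If $F$ carried atoms one would replace $1-F(t)$ by $\mathsf{P}(\,\cdot\ge t)=1-F(t^{-})$, but in our application $F$ is the continuous distribution function of $\xi_{N,x}$ (or $\xi_{x}$), so this distinction disappears.
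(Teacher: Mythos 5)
Your proposal is correct and follows essentially the same route as the paper: the paper proves Lemma \ref{lemma power 3} by Riemann--Stieltjes integration by parts, using that the boundary terms vanish because $F(0)=0$, $1-F(\infty)=0$, $\log(-\log\tfrac1e)=0$ and $\log(\log e)=0$, and then states that Lemma \ref{lemma power 4} is proved analogously with the exponent $3$ replaced by $4$ (which is exactly your derivative computation $g'(u)=4\,\tfrac{\log^{3}u}{u}\bigl[\log|\log u|+\tfrac14\bigr]$). Your Tonelli formulation is just a more careful justification of that same integration by parts (including the sign/nonnegativity check and the atom caveat), so no substantive difference.
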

\begin{proof}[Proof of Lemma \ref{lemma power 4}] Provided in the Appendix A.1.
\end{proof}
For $ x\in\mathbb{R}^{d} $ and $ N\geq 2 $, we use the \textit{Lemma} \ref{lemma power 4} 
to prove (\ref{uniform integrability variance power 4}). Note that $ \forall u\in(\frac{1}{e},e] $, the function $ \mid\log u\mid\leq 1 $, and consequently $G(\mid\log u\mid)=0 $, with $ G(t) $ defined in Eq.(\ref{G(t)}).
\begin{eqnarray}
&&\hspace{-.3cm}\mathsf{E}G(\mid\log^{4}\xi_{N,x}\mid)\nonumber\\
&&\hspace{-0.9cm}=\int\limits_{0}^{\infty}\mid \log^{4}u\mid\cdot\log\big(\mid \log^{4}u\mid\big)dF_{N,x}(u) \nonumber\\
&&\hspace{-0.9cm}=\int\limits_{0}^{\frac{1}{e}} \log^{4} u\cdot\log(-\log u)^{4}dF_{N,x}(u)+\int\limits_{e}^{\infty} (\log^{4} u)\cdot\log\big(\log^{4} u)dF_{N,x}(u)\nonumber\\
&&\hspace{-0.9cm}=16\underbrace{\int\limits_{(0,\frac{1}{e}]}F_{N,x}(u) \dfrac{-\log^{3} u}{u}\bigg[\log(-\log u)+\dfrac{1}{4}\bigg]du}_{:=I_{1}(N,x)}+16\underbrace{\int\limits_{[e,\infty)}[1-F_{N,x}(u)] \dfrac{\log^{3} u}{u}\bigg[\log(\log u)+\dfrac{1}{4}\bigg]du}_{:=I_{2}(N,x)} \nonumber\\
&&\hspace{-0.9cm}=16\big[I_{1}(N,x)+I_{2}(N,x)\big]\,. \label{EG log4 }
\end{eqnarray}
\subsubsection{Finiteness of $ I_{1}(N,x) $}\label{section I1}
We introduce the cumulative distribution function:
\begin{eqnarray}
\hspace{-.8cm} F_{N,x}(u)&=&\mathsf{P}(\xi_{N,x}\leq u)
=1-\mathsf{P}(\xi_{N,x}> u)
\label{F_Nx(u)} \nonumber\\
&=&1-\mathsf{P}\Big(\min\limits_{j=2,\ldots ,N}\rho^{d}(x, X_{j})>r_{N}(u)\Big) 
\nonumber\\
&=&1-\big[1-\mathsf{P}\big(y\in B(r_{N}(u)\big)\big]^{N-1} \nonumber\\
&:=&1-\big[1-\mathsf{P}_{N,x}(u)\big]^{N-1} \,,\label{F_Nx(u) 2}
\end{eqnarray}
with $ \xi_{N,x} $ defined in Eq.(\ref{xi limit}) and
\begin{align} \label{rNu e PNx}
r_{N}(u)=\bigg[\dfrac{u}{(N-1)V_{d}\,e^{\gamma}}\bigg]^{1/d}\,,\,\,\text{and}\,\,\,\mathsf{P}_{N,x}(u)=\int_{B\big(x,r_{N}(u)\big)}f(y)dy\,.
\end{align}
Clearly, $\mathsf{P}_{N,x}(u)\in [0,1] $. For $ u\in(0, 1/e] $, one has $ r_{N}(u)\in(0, R_{1}] $ with 
\begin{align}\label{R_1}
R_{1}= r_{N}(1/e)=\Big[\frac{1}{(N-1) V_{d}\,e^{(\gamma+1)}}\Big]^{1/d}\,.
\end{align}
We rearrange $ N $ as a function of $ R_{1} $: $ N(R_{1})=\frac{1}{R_{1}^{d}V_{d}e^{(\gamma+1)}}+1 $. Thus, we define
\begin{eqnarray}\label{N_1}
N_{1}=N_{1}(R_{1})=\bigg\lceil \frac{1}{R_{1}^{d}V_{d}e^{(\gamma+1)}} \bigg\rceil+1 \,,
\end{eqnarray}
where $\lceil t\rceil =\min\big\{n\in\mathbb{Z}: n\geq t\big\} $ denotes the smallest integer more than or equal to $ t $. 
${N}_{1} $ does not depend on $ x $.
Note that both $r_{N}(u)  $ and $R_{1} $ decrease as $ N $ increases, and for $ N\geq N_{1}  $, we have 
\begin{eqnarray}\label{M_f(x,R_1)}
\dfrac{\mathsf{P}_{N,x}(u)}{\mid B\big(x,r_{N}(u)\big)\mid}=\dfrac{\int_{B\big(x,r_{N}(u)\big)}f(y)dy}{r_{N}^{d}(u)V_{d}}\leq\sup\limits_{r\in(0,R_{1}]}\dfrac{\int_{B\big(x,r_{N}(u)\big)}f(y)dy}{r^{d}V_{d}}:=M_{f}(x,R_{1}) \,.
\end{eqnarray}
In Appendix A.2, we show that $ 1-(1-x)^{N}\leq (Nx)^{\varepsilon_{1}} $ for any $ x\in[0,1] $, $ \varepsilon_{1}\in(0,1] $ and $ N \geq 1 $. Using the result in Eq.(\ref{F_Nx(u) 2}), we have
\begin{eqnarray}
F_{N,x}(u)&\leq & \big[(N-1)\mathsf{P}_{N,x}(u)\big]^{\varepsilon_{1}} ,\quad \varepsilon_{1} \in(0,1] \nonumber\\
&=& \dfrac{M_{f}^{\varepsilon_{1}}(x,R_{1})}{e^{\gamma\varepsilon_{1}}}u^{\varepsilon_{1}}\,.  \label{F_Nx(u) 3}
\end{eqnarray}
Substituting Eq.(\ref{F_Nx(u) 3}) into $ I_{1}(N,x) $ in (\ref{EG log4 }), we obtain
\begin{eqnarray} \label{I_1(N,x)}
I_{1}(N,x) \leq \dfrac{M_{f}^{\varepsilon_{1}}(x,R_{1})}{e^{\gamma\varepsilon_{1}}}\,L(\varepsilon_{1})\,,
\end{eqnarray}
with 
\begin{eqnarray}\label{Lv1}
L(\varepsilon_{1})&=&\int\limits_{(0,\frac{1}{e}]}\frac{(-\log u)^{3}\big[\log(-\log u)+\frac{1}{4}\big]}{u^{1-\varepsilon_{1}}}du 
=\int\limits_{[1,\infty)}\dfrac{v^{3}\big(\log v+\frac{1}{4}\big)}{e^{\varepsilon_{1}v}}dv<\infty.
\end{eqnarray}
We conclude $ L(\varepsilon_{1})<\infty $ since for $ \varepsilon_{1} \in(0,1] $ that
\begin{eqnarray}\label{Lv1 2}
\dfrac{v^{3}\big(\log v+\frac{1}{4}\big)}{e^{\varepsilon_{1}v}}\xrightarrow[\text{}]{}\,0\,\,,\quad \text{as}\,\, v\rightarrow \infty\,.
\end{eqnarray}
Eq.(\ref{Lv1 2}) holds as the denominator (exponential function) grows much faster than the numerator (power and logarithm functions). 
Substituting Eq.(\ref{Lv1}) into (\ref{I_1(N,x)}), we obtain
\begin{eqnarray}\label{I_1 bounded}
I_{1}(N,x) <\infty\,.
\end{eqnarray}
\subsubsection{Finiteness of $ I_{2}(N,x)$}\label{section I2}
We split integral $ \int_{[e,\infty)} $ into $\int_{[e,\sqrt{N-1}]}+\int_{(\sqrt{N-1},\infty)}$ as follows.
\begin{eqnarray}\label{I_2(N,x) 1}
I_{2}(N,x)
&:=& J_{1}(N,x)+J_{2}(N,x)
\end{eqnarray}
with
\begin{eqnarray}
J_{1}(N,x)&=&\int_{[e,\sqrt{N-1}]}[1-F_{N,x}(u)] \dfrac{\log^{3} u}{u}\big(\log\log u+\tfrac{1}{4}\big)du\,, \label{J_1} \\
J_{2}(N,x)&=&\int_{(\sqrt{N-1},\infty)}[1-F_{N,x}(u)] \dfrac{\log^{3} u}{u}\big(\log\log u+\tfrac{1}{4}\big)du \,.\label{J_2}
\end{eqnarray}
\paragraph{Finiteness of $J_{1}(N,x)$}
For $ u\in[e,\sqrt{N-1}] $, one has $ r_{N}(u)\in[r_{N}(e), R_{2}] $ with 
\begin{align}\label{R_2}
R_{2}= r_{N}(\sqrt{N-1})=\Big[\frac{1}{\sqrt{N-1} V_{d}e^{\gamma}}\Big]^{1/d}\,.
\end{align}
Clearly, $N$ can be rearranged as a function of $ R_{2} $: $ N(R_{2})=\frac{1}{({R_{2}^{d}V_{d}e^{\gamma}})^{2}}+1 $. Note that the integral interval of ${[e,\sqrt{N-1}]} $ implies $ e\leq \sqrt{N-1} $ or $N\geq \big\lceil 1+e^{2} \big\rceil=9  $.
\begin{align}\label{N_2 R_2}
N_{2}=N_2(R_{2})=\max\bigg\{\bigg\lceil \frac{1}{({R_{2}^{d}V_{d}e^{\gamma}})^{2}}\bigg\rceil+1,\, 9\bigg\}\,.
\end{align} 
Clearly, ${N}_{2} $ does not depend on $ x $, and 
we consider $ N\geq {N}_{2} $ in this section. For a density $ f $ and $ R_{2}>0 $, we define $\mathcal{D}_{f}(R_{2})=\{x\in \mathcal{S}(f): m_{f}(x,R_{2})>0\}  $ with $ m_{f}(x,R_{2}) $ defined in Eq.(\ref{M_f e m_f}). We have $ \mu\big(\mathcal{S}(f)\setminus \mathcal{D}_{f}(R_{2})\big)=0 $, see, Lemma 3.2 of Bulinski and Dimitrov (2019).
In Appendix A.3, we show that
\begin{eqnarray}\label{1-F_Nx(u) 2}
1-F_{N,x}(u) &\leq & \bigg[\frac{u}{e^{\gamma}}\,m_{f}(x, R_{2})\bigg]^{-\varepsilon}\,,\quad \varepsilon\in(0,e]\,,
\end{eqnarray}
and
\begin{eqnarray}
J_{1}(N,x)&\leq &  \dfrac{e^{\gamma\varepsilon}}{m_{f}^{\varepsilon}(x,R_{2})}\,\int\limits_{[e,\sqrt{N-1}]}\dfrac{\log^{3} u\big(\log\log u+\tfrac{1}{4}\big)}{u^{1+\varepsilon}}du  \label{J_1 part2 eq1} \nonumber\\
\hspace{-3cm} &\leq &  \dfrac{e^{\gamma\varepsilon}}{m_{f}^{\varepsilon}(x,R_{2})}\,\int\limits_{[e,\infty]}\dfrac{\log^{3} u\big(\log\log u+\tfrac{1}{4}\big)}{u^{1+\varepsilon}}du \label{J_1 part2 eq2} \nonumber\\
&=& \dfrac{e^{\gamma\varepsilon}}{m_{f}^{\varepsilon}(x,R_{2})}\,L(\varepsilon)<\infty \,. \label{J_1 part2 eq3}
\end{eqnarray}
The second equality holds since $ \frac{\log^{3} u\big(\log\log u+\tfrac{1}{4}\big)}{u^{1+\varepsilon}} >0 $ for $ u>e $. $ L(\cdot) $ is defined in Eq.(\ref{Lv1}) (its argument is $ \varepsilon $ instead of $ \varepsilon_{1} $), and hence,
we have $L(\varepsilon)<\infty  $; thus, $J_{1}(N,x)<\infty  $.
\paragraph{Finiteness of $J_{2}(N,x) $}
Since $ 1-\mathsf{P}_{N,x}(u)\leq 1-\mathsf{P}_{N,x}(\sqrt{N-1}) $ for $ u\geq N-1 $, we have:
\begin{eqnarray}
1-F_{N,x}(u)=\big[1-\mathsf{P}_{N,x}(u)\big]^{N-1}
&\leq & \big[1-\mathsf{P}_{N,x}(u)\big]\big[1-\mathsf{P}_{N,x}(\sqrt{N-1})\big]^{N-2} \,. \label{1-F_Nx(u) v2 1}
\end{eqnarray}
From Eqs.(\ref{J_2}) and (\ref{1-F_Nx(u) v2 1}), we obtain
\begin{eqnarray}\label{J_2 2}
J_{2}(N,x)&\leq & 
J_{2.1}(N,x)\cdot J_{2.2}(N,x)
\end{eqnarray}
with
\begin{eqnarray}
J_{2.1}(N,x)&:=&\big[1-\mathsf{P}_{N,x}(\sqrt{N-1})\big]^{N-2}\,,\label{J_2 2.1 1}\\
J_{2.2}(N,x)&:=&\int_{(\sqrt{N-1},\infty)}\big[1-\mathsf{P}_{N,x}(u)\big] \dfrac{\log^{3}u\,\big(\log\log u+\frac{1}{4}\big)}{u}du \,. \label{J_2 2.2 1}
\end{eqnarray}
\subparagraph{Investigation of $J_{2.1}(N,x)$}\label{subparagraph J_2 2.1}
Note that $(1-t)^{N}\leq e^{-t\,N} $ for $t\in[0,1] $ and $ N\geq 1 $. We deduce from Eq.(\ref{J_2 2.1 1}) that
\begin{eqnarray}\label{J_2 2.1 4}
J_{2.1}(N,x)
&\leq & e^{-(N-2)\,\mathsf{P}_{N,x}(\sqrt{N-1})} \label{J_2 2.1 2} \nonumber\\
&=& \exp\left\{-\dfrac{N-2}{N-1}\dfrac{\sqrt{N-1}}{\tilde{\gamma}}\,\dfrac{\mathsf{P}_{N,x}(\sqrt{N-1})}{\mid B\big(x,r_{N}(\sqrt{N-1})\big)\mid}\right\}   \nonumber\\
&\leq & \exp{-\frac{1}{2}\frac{\sqrt{N-1}}{e^{\gamma}}\,m_{f}(x,R_{2})} \label{J_2 2.1 4} \nonumber\\
&\leq & \dfrac{(2\,e^{\gamma})^{\varepsilon}}{m_{f}(x,R_{2})^{\varepsilon}}\dfrac{1}{(N-1)^{\varepsilon/2}}<\infty\,,\qquad \varepsilon\in(0,e]\,. \label{J_2 2.1 6}
\end{eqnarray}
The detailed derivation and proofs are provided in Appendix A.4.
\subparagraph{Investigation of $J_{2.2}(N,x)$}\label{subparagraph J_2 2.2}
Let us define $ w={u}/{(N-1)}$ (or $ u=(N-1)w $). For $ N\geq 9 $ (or $ N\geq N_{2} $), one has
\begin{enumerate}[(i)]
\item $ u\in[\sqrt{N-1},\infty)\quad \Longleftrightarrow \quad w\in \Big[\frac{1}{\sqrt{N-1}},\infty\Big)\,,
$
\item $ r_{N}(u)=\bigg[\dfrac{u}{(N-1)V_{d}\tilde{\gamma}}\bigg]^{1/d} = \bigg[\dfrac{w}{V_{d}\tilde{\gamma}}\bigg]^{1/d}=r_{2}(w)\,,\quad u=(N-1)w $\,,
\item $ \mathsf{P}_{N,x}(u)= \mathsf{P}_{2,x}(w)$, based on (ii): $  r_{N}(u)=r_{2}(w) $ and $ \mathsf{P}_{N,x}(u)=\int_{B(x, r_{N}(u))}f(y)dy $,
\item $ 1-F_{N,x}(u)=1-F_{2,x}(w)=1-\mathsf{P}_{2,x}(w)$, see Eq.(\ref{F_Nx(u) 2}).
\end{enumerate} 
We substitute (i)-(iv) into Eq.(\ref{J_2 2.2 1}) and 
split the interval of integration as follows. 
\begin{eqnarray}
&&\hspace{-1.4cm} J_{2.2}(N,x) 
=\int_{(\frac{1}{\sqrt{N-1}},\infty)}\big[1-\mathsf{P}_{2,x}(w)\big] \mathbb{M}(N,w)\dfrac{dw}{w} \nonumber\\
&&\hspace{0.3cm}
:= J_{2.2.1}(N,x)+J_{2.2.2}(N,x) \label{J_2 2.2 3}
\end{eqnarray}
with $\mathbb{M}(N,w)=\big(\log^{3} [(N-1)w]\big)\big(\log\log [(N-1)w]+\frac{1}{4}\big) $ and
\begin{eqnarray}
\hspace{-1cm} J_{2.2.1}(N,x)&=&\int_{\big(\frac{1}{\sqrt{N-1}},e\big]}\big[1-F_{2,x}(w)\big] \mathbb{M}(N,w)\dfrac{dw}{w} \,,\label{J_2 2.2.1 eq1} \\
\hspace{-1cm} J_{2.2.2}(N,x)&=&\int_{(e,\infty)}\big[1-F_{2,x}(w)\big]\mathbb{M}(N,w) \dfrac{dw}{w}\,. \label{J_2 2.2.2 eq1}   
\end{eqnarray}
\textbf{(I) Investigation of $J_{2.2.1}(N,x)  $}\\
Note that $F_{2,x}(w)  $ is the cumulative distribution function, and thus,
\begin{eqnarray}\label{sup 1-f_2,x N eq1}
\sup\limits_{w\in\big(\frac{1}{\sqrt{N-1}},e\big]}\{1-F_{2,x}(w)\}\leq \sup\limits_{w\in[0,\infty)}\{1-F_{2,x}(w)\}= 1 \,.
\end{eqnarray}
From the monotonicity of logarithmic and power functions, we have
\begin{eqnarray}\label{sup Polynomials log(N-1)w eq1}
\sup\limits_{w\in\big(\frac{1}{\sqrt{N-1}},e\big]}\mathbb{M}(N,w)&=&\big(\log^{3} [(N-1)e]\big)\big(\log\log [(N-1)e]+\tfrac{1}{4}\big)   \nonumber\\
&:=&\mathbb{M}_{e}^{\ast}(N) \,.
\end{eqnarray}
Substituting Eq.(\ref{sup 1-f_2,x N eq1}) and (\ref{sup Polynomials log(N-1)w eq1}) into (\ref{J_2 2.2.1 eq1}), we obtain
\begin{eqnarray}
\hspace{-0.6cm} J_{2.2.1}(N,x)
&\hspace{0cm}\leq &\mathbb{M}_{e}^{\ast}(N)\int_{\big(\frac{1}{\sqrt{N-1}},e\big]} \dfrac{dw}{w} \nonumber\\
&=&\hspace{0cm} \mathbb{M}_{e}^{\ast}(N)\big[1+\tfrac{1}{2}\log (N-1)\big]\,.  \label{J_2 2.2.1 eq2 final}
\end{eqnarray}
\textbf{(II) Investigation of $J_{2.2.2}(N,x)  $}\\
We set $ \Delta$ an arbitrage small positive number, and split integral $ \int\limits_{(e,\infty)}$ into $\int\limits_{(e,e^{1+\Delta}]}+\int\limits_{(e^{1+\Delta},\infty)}$:
\begin{eqnarray}
J_{2.2.2}(N,x)
=J_{2.2.2}^{(1)}(N,x)+J_{2.2.2}^{(2)}(N,x)  \label{J_2 2.2.2 eq2}
\end{eqnarray}
with
\begin{eqnarray}
&&\hspace{-1.6cm} J_{2.2.2}^{(1)}(N,x)=\int_{(e,e^{1+\Delta}]}[1-F_{2,x}(w)\big]\mathbb{M}(N,w) \dfrac{dw}{w} \,,\label{J_2 2.2.2 (1)}\\
&&\hspace{-1.6cm} J_{2.2.2}^{(2)}(N,x)=\int_{(e^{1+\Delta},\infty)}[1-F_{2,x}(w)\big]\mathbb{M}(N,w)\dfrac{dw}{w} \,. \label{J_2 2.2.2 (2)}
\end{eqnarray}
\textbf{(II.1) Investigation of $J_{2.2.2}^{(1)}(N,x) $}\\
From the monotonicity of logarithmic and power functions, we obtain 
that
\begin{eqnarray}
\sup\limits_{w\in(e,e^{1+\Delta}]}\mathbb{M}(N,w)  &=&\log^{3} [(N-1)e^{1+\Delta}]\big(\log\log [(N-1)e^{1+\Delta}]+\tfrac{1}{4}\big)   \nonumber\\
&:=&\mathbb{M}_{e^{1+\Delta}}^{\ast}(N)\,.    \label{sup J_2.2.2 (1) part1}
\end{eqnarray}
Substituting 
in Eqs.(\ref{sup 1-f_2,x N eq1}) and (\ref{sup J_2.2.2 (1) part1}) into Eq.(\ref{J_2 2.2.2 (1)}), we obtain
\begin{eqnarray}
 J_{2.2.2}^{(1)}(N,x)&\leq & 
 \mathbb{M}_{e^{1+\Delta}}^{\ast}(N) \int\limits_{(e,e^{1+\Delta}]} \dfrac{1}{w}dw \nonumber\\
 &=& \log^{3} [(N-1)e^{1+\Delta}]\big(\log\log [(N-1)e^{1+\Delta}]+\tfrac{1}{4}\big)\,\Delta \nonumber \\
 &：=& J_{2.2.2}^{(1),\ast}(N)\,. \label{J_2 2.2.2 eq3 (1) final}
\end{eqnarray}
\textbf{(II.2) Investigation of $J_{2.2.2}^{(2)}(N,x)$}\\
We introduce the following \text{Lemma}, which can provide an intermediate result. 
\begin{lemma}\label{corollary K(Delta)}
For an arbitrage small number $ \Delta>0 $, and $ w\in[e^{1+\Delta},\infty) $, the ratio $ \frac{\log(1+\log w)}{\log(\log w)} $ is bounded and precisely, we have
\begin{eqnarray}
&&\dfrac{\log(1+\log w)}{\log(\log w)}\leq \frac{\log(2+\Delta)}{\log(1+\Delta)} :=\widetilde{\Delta} \,.\label{K(Delta)} 
\end{eqnarray}
$ \widetilde{\Delta} $ decreases with $ {\Delta} $, and
$1\leq \widetilde{\Delta} < \infty $. Equality holds if and only if $ \Delta\rightarrow \infty  $.
\end{lemma}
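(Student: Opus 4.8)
The plan is to collapse the two-variable appearance into a single monotone function of one variable. Since $w\in[e^{1+\Delta},\infty)$ is equivalent to $s:=\log w\in[1+\Delta,\infty)$, the ratio in (\ref{K(Delta)}) is exactly
\[
g(s)=\frac{\log(1+s)}{\log s},\qquad s\ge 1+\Delta>1,
\]
and the claimed bound $\widetilde{\Delta}=\frac{\log(2+\Delta)}{\log(1+\Delta)}$ is precisely $g(1+\Delta)$. Hence the whole inequality will follow once I show that $g$ attains its supremum on $[1+\Delta,\infty)$ at the left endpoint, i.e.\ that $g$ is nonincreasing there.

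First I would prove that $g$ is strictly decreasing on $(1,\infty)$. Differentiating gives
\[
g'(s)=\frac{1}{(\log s)^{2}}\left[\frac{\log s}{1+s}-\frac{\log(1+s)}{s}\right],
\]
so, after clearing the strictly positive factors $(\log s)^{2}$ and $s(1+s)$, the sign of $g'(s)$ is that of $s\log s-(1+s)\log(1+s)$. Introducing $\phi(t)=t\log t$, whose derivative $\phi'(t)=1+\log t$ is positive for $t>1/e$, I note that $\phi$ is strictly increasing on $(1,\infty)$; since $1+s>s>1$ this yields $\phi(1+s)>\phi(s)$, that is $s\log s<(1+s)\log(1+s)$, whence $g'(s)<0$ throughout $(1,\infty)$. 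Consequently $\sup_{s\ge 1+\Delta}g(s)=g(1+\Delta)=\widetilde{\Delta}$, which is the required bound.

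The remaining assertions reuse the same monotonicity. Writing $\widetilde{\Delta}=g(1+\Delta)$ and combining the fact that $g$ is decreasing with the fact that $\Delta\mapsto 1+\Delta$ is increasing, the composite $\widetilde{\Delta}$ is decreasing in $\Delta$. The lower bound $\widetilde{\Delta}>1$ is immediate from $2+\Delta>1+\Delta$ together with $\log(1+\Delta)>0$, and the same positivity gives $\widetilde{\Delta}<\infty$ for every fixed $\Delta>0$. Finally, from $\log(2+\Delta)-\log(1+\Delta)=\log\frac{2+\Delta}{1+\Delta}\to 0$ one gets $\widetilde{\Delta}\to 1$ as $\Delta\to\infty$, which is exactly the sense in which equality is approached only in that limit.

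I expect no serious obstacle: the only step needing care is the sign analysis of $g'$, and even that reduces cleanly to the monotonicity of $t\log t$. The one thing to watch is that $\log s$, $s$, and $1+s$ are all strictly positive on the relevant range $s>1$, so that cross-multiplication and the passage to $\phi$ preserve the direction of the inequality.
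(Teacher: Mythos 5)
Your proof is correct and follows essentially the same route as the paper's Appendix A.5: both show the ratio is a decreasing function of its argument by a sign analysis of the derivative and then evaluate at the left endpoint $w=e^{1+\Delta}$ (your substitution $s=\log w$ and the reduction to the monotonicity of $t\log t$ are only cosmetic streamlinings of the paper's direct comparison of the two fractions). Your limit analysis of $\widetilde{\Delta}$ is also the correct one — it tends to $1$ as $\Delta\to\infty$ and to $+\infty$ as $\Delta\to 0^{+}$, consistent with the lemma statement (the appendix's remark that equality holds as $\Delta\to 0$ is a typo).
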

\begin{proof}[Proof of Lemma \ref{corollary K(Delta)}] Provided in the Appendix A.5.
\end{proof}
The \textit{Lemma} \ref{corollary K(Delta)} implies 
\begin{eqnarray}\label{J_2 2.2.2 (2) eq4}
 J_{2.2.2}^{(2)}(N,x)< \big[4\log\log (N-1)+\widetilde{\Delta}\big]\,
\Psi(N,x)\,,
\end{eqnarray}
\begin{align} \label{M_J 2.2.2 (2) P1 eq2}
\hspace{-1cm}\text{with}\quad \Psi(N,x)=& \sum\limits_{\alpha=1}^{4}\Bigg\{\theta_{\alpha}\Big[\log^{(4-\alpha)}(N-1)\Big]\cdot \nonumber\\
& \int\limits_{(e^{1+\Delta},\infty)}\dfrac{\log^{(\alpha-1)} w}{w}\big[\log\log w+\tfrac{1}{4}\big] \big[1-F_{2,x}(w)\big]dw\Bigg\}
\end{align}
with $ \theta_{1}=\theta_{4}=1 $ and $ \theta_{2}=\theta_{3}=3 $.
By \textit{Lemma} \ref{lemma power 3} and \textit{Lemma} \ref{lemma power 4}, we have 
\begin{align}\label{EGlog power}
\int\limits_{(e^{1+\Delta},\infty)}\dfrac{\log^{(\alpha-1)} w}{w}\big[\log\log w+\tfrac{1}{4}\big] \big[1-F_{2,x}(w)\big]dw\leq \frac{R^{(\alpha)}(x)}{\alpha^{2}},\,\,\,\alpha=1,2,3,4.
\end{align}
with
\begin{align}
R^{(\alpha)}(x):=\mathsf{E} G(\mid\log w\mid^{\alpha})=\int\limits_{\mathbb{R}^{d}}\mid\log w\mid^{\alpha}\log\mid\log w\mid^{\alpha}dF_{2,x}(w) \,. 
\end{align}
Note that $ u $ 
represents $ \xi_{N,x}$ in this section, and based on definition of $ w $, we have 
\begin{eqnarray} \label{w<xi_2,x}
w=\dfrac{\xi_{N,x}}{N-1} &=& V_{d}e^{\gamma}{\min_{j=2,\ldots,N}\rho^{d}(x, X_{j})}  \nonumber\\
&\leq &  V_{d}e^{\gamma}{\rho^{d}(x, X_{2})} =\xi_{2,x} \,.
\end{eqnarray}
Thus, for $ w\in (e,\infty) $, one has $ |\log w|\leq  |\xi_{2,x}| $ and $ \mathsf{E}G|\log^{\alpha} w|\leq \mathsf{E} G|\xi_{2,x}^{\alpha}|$, $\alpha\geq 0 $.
\begin{lemma}\label{G_a+b}
For $ x\in\mathbb{R}^{d} $ and $ \alpha=1,2,3,4 $, there exists constants $ a,b\geq 0 $, such that:
\begin{align}
G(\mid\log^{\alpha}\xi_{2,x}\mid)\leq a\,G(\mid\log^{\alpha}{\rho}(x,y)\mid)+b\,.
\end{align}
\end{lemma}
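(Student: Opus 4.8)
The plan is to reduce the two-variable inequality to a one-variable growth comparison for $G$ and then settle it by separating a large-argument regime from a bounded one. The starting point is that $\xi_{2,x}$ depends on $\rho(x,y)$ only through a fixed affine change of logarithms: from $\xi_{2,x}=V_{d}e^{\gamma}\rho^{d}(x,y)$ in Eq.(\ref{w<xi_2,x}) we get $\log\xi_{2,x}=c_{0}+d\log\rho(x,y)$, where $c_{0}:=\gamma+\log V_{d}$ depends only on the dimension. Writing $s:=\mid\log\rho(x,y)\mid\geq 0$, $A:=\mid c_{0}\mid$ and $D:=d\geq 1$, the triangle inequality gives $\mid\log\xi_{2,x}\mid\leq A+Ds$, and since $G$ is nondecreasing on $[0,\infty)$,
\begin{equation*}
G\big(\mid\log^{\alpha}\xi_{2,x}\mid\big)=G\big(\mid\log\xi_{2,x}\mid^{\alpha}\big)\leq G\big((A+Ds)^{\alpha}\big).
\end{equation*}
It therefore suffices to find $a,b\geq 0$, depending only on $\alpha$ and $d$, such that $G\big((A+Ds)^{\alpha}\big)\leq a\,G(s^{\alpha})+b$ for all $s\geq 0$, because the right-hand side equals $a\,G(\mid\log^{\alpha}\rho(x,y)\mid)+b$.

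First I would treat the large-argument regime $s\geq s_{0}:=\max\{A/D,\,e^{1/\alpha}\}$. There $A+Ds\leq 2Ds$, so by monotonicity $G((A+Ds)^{\alpha})\leq G\big((2D)^{\alpha}s^{\alpha}\big)$, and expanding $G(t)=t\log t$ for $t\geq 1$ (Eq.(\ref{G(t)})) yields
\begin{equation*}
G\big((2D)^{\alpha}s^{\alpha}\big)=(2D)^{\alpha}G(s^{\alpha})+(2D)^{\alpha}\alpha\log(2D)\,s^{\alpha}.
\end{equation*}
Because $s^{\alpha}\geq e$ on this range, one has $G(s^{\alpha})=s^{\alpha}\log(s^{\alpha})\geq s^{\alpha}$, so the residual term $s^{\alpha}$ is itself dominated by $G(s^{\alpha})$. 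Hence for all $s\geq s_{0}$,
\begin{equation*}
G\big((A+Ds)^{\alpha}\big)\leq (2D)^{\alpha}\big(1+\alpha\log(2D)\big)\,G(s^{\alpha})=:a\,G(s^{\alpha}).
\end{equation*}

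It remains to absorb the bounded regime $0\leq s\leq s_{0}$ into the additive constant. The map $s\mapsto G\big((A+Ds)^{\alpha}\big)$ is continuous and increasing, so it attains a finite maximum $b:=\max_{0\leq s\leq s_{0}}G\big((A+Ds)^{\alpha}\big)$ on that compact interval; for such $s$ the desired inequality holds trivially since its left side is at most $b$. Combining the two regimes gives $G(\mid\log^{\alpha}\xi_{2,x}\mid)\leq a\,G(\mid\log^{\alpha}\rho(x,y)\mid)+b$ for every $\alpha\in\{1,2,3,4\}$, and since $c_{0}$ and $d$ involve neither $x$ nor $y$, the constants $a,b$ may in fact be chosen uniformly.

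The one genuinely delicate point is the kink of $G$ at $t=1$: for $s<1$ the target $G(s^{\alpha})$ vanishes while $G((A+Ds)^{\alpha})$ need not, so no purely multiplicative bound can hold near the origin and the free constant $b$ is indispensable. Handling this crossover via the compactness argument above—rather than through the ratio limit $G((A+Ds)^{\alpha})/G(s^{\alpha})\to d^{\alpha}$, which is also available and yields the sharp constant $a=d^{\alpha}$—is the main step to carry out with care.
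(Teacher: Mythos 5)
Your proof is correct, and it takes a genuinely more direct route than the paper's. The paper (Appendix A.6) argues in two multiplicative stages: it first compares $G(\mid\log\xi_{2,x}\mid^{\alpha})$ with $G(\mid\log\rho^{d}\mid^{\alpha})$ by bounding the two ratios $\mid\log\xi_{2,x}\mid^{\alpha}/\mid\log\rho^{d}\mid^{\alpha}$ and $\log\mid\log\xi_{2,x}\mid^{\alpha}/\log\mid\log\rho^{d}\mid^{\alpha}$ by suprema taken over $\rho\in(0,1/e)\cup(e,\infty)$, and then compares $G(\mid\log\rho^{d}\mid^{\alpha})$ with $G(\mid\log\rho\mid^{\alpha})$ through the identity $G(\mid\log\rho^{d}\mid^{\alpha})=d^{\alpha}G(\mid\log\rho\mid^{\alpha})+d^{\alpha}\log(d^{\alpha})\mid\log\rho\mid^{\alpha}$ and a case analysis in $\rho$. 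You collapse this into one step: the affine relation $\log\xi_{2,x}=c_{0}+d\log\rho$ plus the triangle inequality, a single expansion $G(ct)=c\,G(t)+c(\log c)\,t$, domination of the linear residual by $G(s^{\alpha})$ on the range $s^{\alpha}\geq e$, and a compactness argument that dumps the bounded range $s\leq s_{0}$ into the additive constant $b$. What your version buys is an explicit and watertight treatment of the crossover at the kink of $G$ at $t=1$ (where $G(s^{\alpha})=0$ but the left-hand side need not vanish), which the paper handles only implicitly by restricting its suprema to $\rho\notin[1/e,e]$; the price is a non-sharp constant $a=(2d)^{\alpha}\big(1+\alpha\log(2d)\big)$ in place of the paper's constant of order $d^{\alpha}$, which is immaterial for how the lemma is used.
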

\begin{proof}[Proof of Lemma \ref{G_a+b}] provided in the Appendix A.6.
\end{proof}
The \textit{Lemma} \ref{G_a+b} implies that $  \mathsf{E}G(\mid\log^{\alpha}\xi_{2,x}(x,y)\mid)\leq a\,\mathsf{E}G(\mid\log^{\alpha}{\rho}(x,y)\mid)+b$\,. 
Note that $K_{f,{\color{black} \alpha}}(\varepsilon_{0})<\infty $ is defined in Eq.(\ref{condition K}), thus 
\begin{align}\label{G_a+b bounded}
K_{f,{\color{black} \alpha}}(\varepsilon_{0}) <\infty\,\Longrightarrow \mathsf{E}G(\mid\log^{\alpha}{\rho}(x,y)\mid)<\infty\,.
\end{align}
Set $ \mathcal{A}_{f,\alpha}:=\{x\in\mathcal{S}(f)\}: \mathsf{E}G|\xi_{2,x}|^{\alpha}<\infty,\,\,\alpha=1,2,3,4.\}$, where $\mathcal{S}(f) $ is support of $ f $. According to the \textit{Lemma} \ref{G_a+b} and (\ref{G_a+b bounded}), one has $ \mu\big(\mathcal{S}(f)\setminus\mathcal{A}_{f,\alpha}\big)=0 $. Let us define $\mathcal{A}:=\mathcal{A}_{f,\alpha}(G)\cap \Lambda (f)\cap \mathcal{S}(f)\cap\mathcal{D}_{f}(R) $, where $\mu\big(\mathcal{S}(f)\setminus\mathcal{A}\big)=0  $, and for $ x\in\mathcal{A}$, we have
\begin{eqnarray}\label{R_alpha < infty}
R^{(\alpha)}(x)<\infty\,,\qquad\alpha=1,2,3,4.\,\,
\end{eqnarray}
Substituting Eqs.(\ref{EGlog power})  
and (\ref{M_J 2.2.2 (2) P1 eq2}) into (\ref{J_2 2.2.2 (2) eq4}),  we obtain
\begin{align}
\Psi(N,x)
\leq  \widetilde{R}(N,x) \label{M_J 2.2.2 (2) P1 eq3}
\end{align}
with
\begin{align}
 \widetilde{R}(N,x)= \frac{R^{(4)}(x)}{16}+\frac{R^{(3)}(x)\log(N-1)}{3}+\frac{3R^{(2)}(x)\log^{2}(N-1)}{4} +R^{(1)}(x) \log^{3}(N-1).\nonumber
\end{align}
From Eqs.(\ref{J_2 2.2.2 (2) eq4}) and (\ref{M_J 2.2.2 (2) P1 eq3}), we have
\begin{align}
 J_{2.2.2}^{(2)}(N,x)<\big(4\log\log (N-1)+\widetilde{\Delta}\big)\,\widetilde{R}(N,x)\,.  \label{J_2 2.2.2 (2) eq5 final}
\end{align}
Recall that $R^{(\alpha)}(x)<\infty$ in Eq.(\ref{R_alpha < infty}), and this implies that
\begin{eqnarray}\label{P_1R}
\widetilde{R}(N,x) = \mathcal{O}\big(\log^{3}(N-1)\big)\,.
\end{eqnarray}
We substitute $ J_{2.2.2}^{(1)}(N,x) $ in (\ref{J_2 2.2.2 eq3 (1) final}) and $ J_{2.2.2}^{(2)}(N,x) $ in (\ref{J_2 2.2.2 (2) eq5 final}) into $J_{2.2.2}(N,x)$ in (\ref{J_2 2.2.2 eq2}):
\begin{eqnarray}\label{J_2 2.2.2 final}
&&J_{2.2.2}(N,x)=J_{2.2.2}^{(1)}(N,x)+J_{2.2.2}^{(2)}(N,x)\nonumber\\
&&\hspace{-.0cm}<\log^{3}[(N-1)e^{1+\Delta}]\big(\log\log[(N-1)e^{1+\Delta}]+\tfrac{1}{4}\big)\,\Delta \hspace{.cm}+\big(4\log\log (N-1)+\widetilde{\Delta}\big)\widetilde{R}(N,x)  \nonumber\\
&&=\mathcal{O}\big(\log^{3}(N-1)\,\log\log (N-1)\big)\,. 
\end{eqnarray} 
We substitute $J_{2.2.1}(N,x) $ in (\ref{J_2 2.2.1 eq2 final}) and $J_{2.2.2}(N,x)  $ in (\ref{J_2 2.2.2 final}) into $  J_{2.2}(N,x)$ in (\ref{J_2 2.2.2 eq2}):
\begin{eqnarray}
J_{2.2}(N,x)&=&J_{2.2.1}(N,x)+J_{2.2.2}(N,x) \nonumber\\
&<&\hspace{0cm}\log^{3}[(N-1)e]\big(\log(\log [(N-1)e]+\tfrac{1}{4}\big)\big[1+\tfrac{1}{2}\log (N-1)\big] \nonumber\\
&&+\log^{3}[(N-1)e^{1+\Delta}]\big(\log\log [(N-1)e^{1+\Delta}]+\tfrac{1}{4}\big)\,\Delta \nonumber\\
&&\hspace{.cm}+\big[4\log\log (N-1)+\widetilde{\Delta}\,\big]\widetilde{R}(N,x)\\
&&\hspace{-.6cm}=\mathcal{O}\big([\log(N-1)]^{4}\log\log(N-1)\big)\,.
\label{J_2 2.2.2 eq2 final}
\end{eqnarray}
Substituting $ J_{2.2}(N,x) $ in (\ref{J_2 2.2.2 eq2 final}) and $ J_{2.1}(N,x) $ in (\ref{J_2 2.1 6}) into $  J_{2}(N,x) $ in (\ref{J_2 2}), we obtain
\begin{eqnarray}
&&\hspace{-.9cm} J_{2}(N,x)\leq  J_{2.1}(N,x)\cdot J_{2.2}(N,x)\nonumber\\
&&\hspace{-.9cm}\leq \dfrac{(2e^{\gamma})^{\varepsilon}}{m_{f}(x,R_{2})^{\varepsilon}}\dfrac{1}{(N-1)^{\varepsilon/2}} 
\cdot\Big\{\log^{3}[(N-1)e]\big(\log\log[(N-1)e]+\tfrac{1}{4}\big) \big[1+\tfrac{1}{2}\log (N-1)\big]\nonumber\\
&&\hspace{-.5cm}+\log^{3}[(N-1)e^{1+\Delta}]\big(\log\log[(N-1)e^{1+\Delta}]+\tfrac{1}{4}\big)\,\Delta \nonumber\\
&&\hspace{-.5cm}+\big[4\log\log (N-1)+\widetilde{\Delta}\,\big]\widetilde{R}(N,x)\Big\}\,,\qquad \varepsilon\in(0,e] \nonumber\\
&&\hspace{-.9cm}:= R_{J_{2}}(N,x,\Delta)\,. \label{J_2 2 final}
\end{eqnarray}
Note that logarithm functions grow slower than power functions, and this implies that
\begin{eqnarray}\label{R_j_2}
R_{J_{2}}(N,x,\Delta)=\dfrac{\mathcal{O}\big([\log^{4}(N-1)]\log\log (N-1)\big)}{\mathcal{O}((N-1)^{\varepsilon/2})}\longrightarrow 0\,,\quad N\rightarrow\infty \,. 
\end{eqnarray}
Thus, we have
\begin{eqnarray}\label{J_2 2 final+1}
J_{2}(N,x)\leq R_{J_{2}}(N,x,\Delta) <\infty \,.
\end{eqnarray}
Note that we have proved $ J_{1}(N,x) <\infty $ in \text{Section (\ref{section I2})}, and thus, we proved $ I_{2}(N,x)=J_{1}(N,x)+J_{2}(N,x) <\infty $. 
Combing $ I_{1}(N,x)<\infty $ in \text{Section (\ref{section I1})} and $ I_{2}(N,x)<\infty  $ in \text{Section (\ref{section I2})} into Eq.(\ref{EG log4 }), we obtain that 
\begin{eqnarray}
\mathsf{E}G\big(\mid\log^{4}\xi_{N,x}\mid\big)=16\big[I_{1}(N,x)+I_{2}(N,x)\big]<\infty \label{EG log4 final}
\end{eqnarray}
given the constants $ \varepsilon= \dfrac{\varepsilon_{0}\varepsilon_{2}}{1+\varepsilon_{0}} $, $ \varepsilon_{2} \leq \frac{1+\varepsilon_{0}}{\varepsilon_{0}}e$, $ \varepsilon\,,\varepsilon_{0}\,,\varepsilon_{2}>0 $ and $ x\in\mathcal{A}$, 
$ N\geq \overline{N}=\max\big\{N_{1}, N_{2}\big\} $. We proved the uniform integrability of $ \big\{\log^{4}\xi_{N,x}\big\} $ in (\ref{uniform integrability variance power 4}), and hence:
\begin{eqnarray}
\mathsf{Var}\big[ \tilde{\zeta}_{1}^{2}(N)\big]<\infty\,.
\end{eqnarray}
\subsection{Proof of $ \mathsf{Cov}\big[ \tilde{\zeta}_{i}^{2}(N)\,,\,\tilde{\zeta}_{j}^{2}(N)\big]\xrightarrow[\text{}]{N\rightarrow \infty}\,0  $ }
Note that $ \tilde{\zeta}_{i}^{2}(N)={\zeta}_{i}^{2}(N)-\frac{\pi^{2}}{6} $ in Eq.(\ref{SNx expansion}). For $ i\neq j $ and $ N\geq 3 $, we have
\begin{eqnarray}
\mathsf{Cov}\big[ \tilde{\zeta}_{i}^{2}(N)\,,\,\tilde{\zeta}_{j}^{2}(N)\big]\,.
&=&\mathsf{Cov}\big[ {\zeta}_{i}^{2}(N)\,,\,{\zeta}_{j}^{2}(N)\big]
\end{eqnarray}
For the investigation of $  \mathsf{Cov}\big[ {\zeta}_{i}^{2}(N)\,,\,{\zeta}_{j}^{2}(N)\big]$, we define the joint cumulative distribution function (CDF) $ F^{i,j}_{N,x,y}(u,w) $ as follows.
\begin{eqnarray}\label{CDF}
\hspace{-.6cm} F^{i,j}_{N,x,y}(\tilde{u},\tilde{w})&=&\mathsf{Pr}({\zeta}_{i}^{2}(N)\leq\tilde{u}^{2}\,,\,{\zeta}_{j}^{2}(N))\leq\tilde{w}^{2} \big| X_{i}=x\,,X_{j}=y)\nonumber\\
\hspace{-.6cm} &=&\mathsf{Pr}(|\zeta_{i}(N)|\leq\tilde{u}\,,\,|\zeta_{j}(N)|\leq\tilde{w} \big| X_{i}=x\,,X_{j}=y)\,,\quad \tilde{u},\,\tilde{w}\geq 0.
\end{eqnarray}
Note that $ \zeta_{i}(N) $ is defined in Eq.(\ref{zeta_i N}) which can be rewritten as $\zeta_{i}(N)=\log\xi_{N,X_{i}} $ with 
\begin{align}\label{xi_X_i}
\xi_{N,X_{i}} = {\rho^{d}_{i}}V_{d}e^{\gamma}(N-1)\,\,\,\text{with}\,\,\, \rho_{i}:=\min\limits_{k\in\{1,\ldots ,N\}\setminus\{i\}}\rho(X_{i}, X_{k}).
\end{align}
Clearly, $0 <\rho_{i}\leq \rho(X_{i},X_{k})|_{X_{i}=x,X_{k}=y}:=\rho(x,y)$.
We set $ u=e^{\tilde{u}}$, and hence:
\begin{eqnarray}\label{zeta xi inequality}
|\zeta_{i}(N)|\leq\tilde{u} 
\quad &\Leftrightarrow &\quad \frac{1}{u} \leq \xi_{N,X_{i}}\leq u  
\quad \Leftrightarrow \quad \dfrac{1}{u}\leq \rho_{i}^{d}V_{d}e^{\gamma}(N-1)\leq u\,.
\end{eqnarray}
For $ \tilde{u}\geq 0 $, one has $u \geq 1 $. Note that $ \xi_{N,X_{i}} $ increases in proportion to $ N $, and this implies $ \xi_{N,X_{i}}\rightarrow \infty $ as $ N\rightarrow \infty $. Hence, there exits a positive $ {N}_{0} $, such that for $ N\geq {N}_{0} $, one has $ \xi_{N,X_{i}} \geq 1 $ or $ \log\xi_{N,X_{i}}\geq 0$.
From Eq.(\ref{zeta xi inequality}), one has $ \xi_{N,X_{i}}\geq 1 \Leftrightarrow N \geq \frac{1}{\rho_{i}^{d}V_{d}e^{\gamma}} +1  $. 
Note that $ N\geq 3 $ is required for deriving the covariance matrix, and we define $ N_{0} $:
\begin{align}\label{N_0}
N_{0}=\max\bigg\{\bigg\lceil \frac{1}{\rho_{i}^{d}V_{d}e^{\gamma}} \bigg\rceil+1,\, 3\bigg\}\,.
\end{align}
${N}_{0} $ does not depend on $ x $ and $ y $. For $ N\geq {N}_{0} $, the intervals of $\xi_{N,X_{i}}$ and ${\zeta}_{i}(N) $ are, respectively, $ [1,u] $ and $ [0,\tilde{u}] $. 
This implies $ r_{N}(1) \leq \rho_{i} \leq r_{N}(u)$ and
$ \mathsf{Pr}(\xi_{N,X_{i}}<1)=0 \Leftrightarrow \mathsf{Pr}\big(\rho_{i}< r_{N}(1)\big)=0$. Clearly, Eqs.(\ref{xi_X_i})-(\ref{N_0 tilde}) hold for $ \zeta_{j}(N) $ and $ \xi_{N,X_{j}} $ when one replaces $i $ with $j $, and $ u,\tilde{u} $ with $ w,\tilde{w} $. The CDF $ F^{i,j}_{N,x,y}(\tilde{u},\tilde{w}) $ in (\ref{CDF}) is rearranged as:
\begin{eqnarray}
&&\hspace{-0.8cm} F^{i,j}_{N,x,y}(\tilde{u},\tilde{w}) =1-\mathsf{Pr}\big(|\zeta_{i}(N)|> \tilde{u} \big| X_{i}=x\big)-\mathsf{Pr}\big(|\zeta_{j}(N)|> \tilde{w} \big| X_{j}=y\big)\nonumber\\
&&\hspace{1.8cm}+\mathsf{Pr}\big(|\zeta_{i}(N)|> \tilde{u}\,,\,|\zeta_{j}(N)|> \tilde{w} \big| X_{i}=x\,,X_{j}=y\big) 
\nonumber\\
&=&1-\mathsf{Pr}\big(\zeta_{i}(N)> \tilde{u} \big| X_{i}=x\big)-\mathsf{Pr}\big(\zeta_{j}(N)> \tilde{w} \big| X_{j}=y\big)\nonumber\\
&&+\mathsf{Pr}\big(\zeta_{i}(N)> \tilde{u}\,,\,\zeta_{j}(N)> \tilde{w} \big| X_{i}=x\,,X_{j}=y\big) 
\nonumber\\
&=& 1-\mathsf{Pr}\big(\rho_{i}> r_{N}(u) \big| X_{i}=x\big)-\mathsf{Pr}\big(\rho_{j}> r_{N}(w) \big| X_{j}=y\big)\nonumber\\
&&+\mathsf{Pr}\big(\rho_{i}> r_{N}(u)\,,\,\rho_{j}> r_{N}(w) \big| X_{i}=x\,,X_{j}=y\big) \nonumber\\
&=&1-\mathds{1}\big(\rho(x, y)>r_{N}(u)\big)\mathsf{Pr}\big({\min\limits_{k\neq i,j}\rho(x,X_{k})}> r_{N}(u)\big) \nonumber\\
&&-\mathds{1}\big(\rho(x, y)>r_{N}(w)\big)\mathsf{Pr}\big({\min\limits_{k\neq i,j}\rho(y,X_{k})}> r_{N}(w) \big)\nonumber\\
&&\hspace{-.8cm}+\mathds{1}\big(\rho(x, y)>\max\{r_{N}(u),r_{N}(w)\}\big)\mathsf{Pr}\big({\min\limits_{k\neq i,j}\rho(x, X_{k})}> r_{N}(u)\,,\,{\min\limits_{k\neq i,j}\rho(y, X_{k})}> r_{N}(w) \big) \nonumber\\
&:=& F_{N,x,y}(u,w) \,,\label{CDF 2}
\end{eqnarray}
where the subscript of the $ \min\limits_{k\neq i, j}$ denotes $ k\in\{1,\ldots, N\}\setminus \{i,j\}$. $ r_{N}(u)$ is defined in Eq.(\ref{rNu e PNx}). Note that the formula of $ F^{i,j}_{N,x,y}(\tilde{u},\tilde{w}) $ is identical for every $ i, j $, and thus, we neglect the superscripts and simplify $ F^{i,j}_{N,x,y} $ as $ F_{N,x,y}$. 
We set $ \mathcal{A}=\mathcal{A}_{1}\cup \mathcal{A}_{2} $ with $ \mathcal{A}_{1}=\{(x,y):x\in \mathcal{A}, y\in \mathcal{A}, \,x\neq y\} $ and $ \mathcal{A}_{2}=\{(x,y):x\in \mathcal{A}, y\in \mathcal{A},\, x=y\} $. Note that the random variable $ \xi $ has a probability distribution function $ P_{\xi}  $ and a density function $ f_{\xi} $ w.r.t the league measure in $\mathbb{R}^{d} $. The following results hold evidently
\begin{eqnarray}  
\big(P_{\xi}\otimes P_{\xi}\big)\big(\mathcal{A}_{1})=1 \qquad \text{and}\qquad \big(P_{\xi}\otimes P_{\xi}\big)\big(\mathcal{A}_{2})=0\,. \nonumber
\end{eqnarray}
Clearly, we have $ F_{N,x,y}(u,w)=1\,,\,\, {u}\geq 1 ,\,{w}\geq 1 $, when $(x,y)\in \mathcal{A}_{2}$. In case of $(x,y)\in\mathcal{A}_{1}$, $ \forall t\geq 0 $, we have $ r_{N}(t)\rightarrow 0 $, as $ N\rightarrow\infty $.
Thus, we can find a finite number $N'_{0}=  N'_{0} (x,y,w,u)$, such that for any $ N\geq N'_{0} $, one has
\begin{eqnarray}\label{rNu,rNw,BxBy}
r_{N}(u)<\dfrac{\rho(x,y)}{2}\,,\quad  r_{N}(w)<\dfrac{\rho(x,y)}{2}\quad \text{and}\quad B(x,r_{N}(u))\cap B(y,r_{N}(w))=\emptyset  \,.
\end{eqnarray}
Moreover, there exits a finite number $\widetilde{N}_{0}  $:
\begin{align}\label{N_0 tilde}
\widetilde{N}_{0}= \max \{N_{0}, N'_{0}\}\,,
\end{align}
such that for any $N\geq\widetilde{N}_{0}:= \max \{N_{0}, N'_{0}\}$, 
and substituting Eq.(\ref{rNu,rNw,BxBy}) into (\ref{CDF 2}), we obtain 
\begin{eqnarray}
F_{N,x,y}(u,w)&=& 1-\mathsf{Pr}\big({\min\limits_{k\neq j,i}\rho(X_{k}, x)}> r_{N}(u)\big) -\big({\min\limits_{k\neq i,j}\rho(X_{k}, y)}> r_{N}(w) \big)\nonumber\\
&&+\mathsf{Pr}\big({\min\limits_{k\neq i,j}\rho(x, X_{k})}> r_{N}(u)\,,\,{\min\limits_{k\neq i,j}\rho(y, X_{k})}> r_{N}(w) \big)
\nonumber\\
&=& 1-\prod\limits_{k\neq i,j}\mathsf{Pr}\big(X_{k}\notin B(x,r_{N}(u)) \big) -\prod\limits_{k\neq i,j}\mathsf{Pr}\big(X_{k}\notin B(y,r_{N}(w)) \big)\nonumber\\
&&+\prod\limits_{k\neq i,j}\mathsf{Pr}\big(X_{k}\notin B(x,r_{N}(u))\cup B(y,r_{N}(w)) \big)
\nonumber\\
&=& 1-\bigg(1-\int_{B(x,r_{N}(u))}f(z)dz\bigg)^{N-2}-\bigg(1-\int_{B(y,r_{N}(w))}f(z)dz\bigg)^{N-2}\nonumber\\
&&+\bigg(1-\int_{B(x,r_{N}(u))}f(z)dz-\int_{B(y,r_{N}(w))}f(z)dz\bigg)^{N-2}
\,.\qquad\label{CDF 3}
\end{eqnarray}
Note that $ f(x)>0 $ and $ f(y)>0 $ for $ x,y\in\mathcal{S}(f) $. 
Moreover, $F_{N,x,y}(u,w)  $ converges to:
\begin{eqnarray}
F_{x,y}(u,w)&=&\lim\limits_{N\rightarrow\infty} F_{N,x,y}(u,w)\nonumber\\
&=& 1-\lim\limits_{N\rightarrow\infty}\bigg(1-\dfrac{uf(x)}{\tilde{\gamma}(N-1)}\bigg)^{N-2}-\lim\limits_{N\rightarrow\infty}\bigg(1-\dfrac{wf(y)}{\tilde{\gamma}(N-1)}\bigg)^{N-2} \nonumber\\
&&+\lim\limits_{N\rightarrow\infty}\bigg(1-\dfrac{uf(x)+wf(y)}{\tilde{\gamma}(N-1)}\bigg)^{N-2} \nonumber\\
&=& 1-e^{\frac{-uf(x)}{\tilde{\gamma}}}-e^{\frac{-wf(y)}{\tilde{\gamma}}}+e^{-\frac{-uf(x)+wf(y)}{\tilde{\gamma}}} \nonumber\\
&=&\Big(1-e^{\frac{-f(x)u}{\tilde{\gamma}}}\Big) \Big(1-e^{\frac{-f(y)w}{\tilde{\gamma}}}\Big)\,\quad  \nonumber\\
&=& F_{x}(u)\cdot F_{y}(w) \,.\label{CDF 4 infty}
\end{eqnarray}
Clearly, $ F_{x,y}(u,w) $ is a joint distribution function of a vector $ \eta_{x,y}=(\eta_{x},\eta_{y}) $, where $ \eta_{x} $ and $ \eta_{y} $ are independent random variables and follow the following exponential distributions with means $ e^{\gamma}/f(x) $ and $e^{\gamma}/f(y) $, respectively.\\
Note that $ \zeta_{i}(N) $ in Eq.(\ref{zeta_i N}) is defined on a ball centred at $ X_{i} $. Note also that $\xi_{N,x} $ in Eq.(\ref{xi limit}) satisfies:
\begin{align}
\zeta_{1}(N)\big|_{X_{1}=x}=\log \xi_{N,x}, \quad \text {or}\quad \xi_{N,x} = e^{\zeta_{1}(N)}\big|_{X_{1}=x}. \nonumber
\end{align}  
$\xi_{N,x} $ is defined on a ball centred at $ X_{1}=x $.
For $ N\geq 3 $, a ball centred at $ X_{i} $ and with a given point $ X_{j} $, $ i\neq j $, the definition of $ \zeta_{i}(N) $ can be expanded to $ \zeta_{i\,|\,j}(N) $:
\begin{eqnarray} \label{zeta e xi relation}
\zeta_{i\,|\,j}(N)=\log\Big\{(N-1) V_{d}e^{\gamma} \rho_{i\,|\,j}^{d} \Big\} 
\end{eqnarray}
with 
\begin{align}
\rho_{i\,|\,j}= \min\limits_{k\in\{1,\ldots,N\}\setminus\{i\}}\rho(X_{i},X_{k})\Big|_{X_{j}} =\min\Big\{\Big[\min\limits_{k\in\{1,\ldots,N\}\setminus\{i,j\}}\rho(X_{i},X_{k})\Big],\rho(X_{i},X_{j})\Big\} \,.       \label{Xi(N,x),y}
\end{align}
For $ N\geq 3 $, $ X_{1}=x $, $X_{2}=y $ and $ (x,y)\in\mathcal{A}_{1}$, we expand $\xi_{N,x} $ to $ \eta^{}_{N,x,y}=\big(\eta^{y}_{N,x}, \eta^{x}_{N,y}\big)$:
\begin{eqnarray}
&\eta^{y}_{N,x}=\xi_{N,x}\big|_{X_{2}=y}=(N-1)V_{d}e^{\gamma}\min\Big\{\Big[\min\limits_{
k=3,\ldots,N
}
\rho^{d}(x,X_{k})\Big],\rho^{d}(x,y)\Big\}\,,\quad \label{eta y,i,j N,x 1} \\
&\eta^{x}_{N,y}=\xi_{N,y}\big|_{X_{1}=x}=(N-1)V_{d}e^{\gamma}\min\Big\{\Big[\min\limits_{k=3,\ldots,N}\rho^{d}(y,X_{k})\Big],\rho^{d}(x,y)\Big\}\,,\quad \label{eta x,i,j N,y 1}
\end{eqnarray}
and analogously to the Eq.(\ref{zeta e xi relation}), the relations between $\zeta_{i\,|\,j}(N)$ and $ \eta^{}_{N,x,y} $ satisfies:
\[
\begin{pmatrix}
    \zeta_{1\,|\,2}(N)\big|X_{1}=x,X_{2}=y \\
    \zeta_{2\,|\,1}(N)\big|X_{2}=y,X_{1}=x
  \end{pmatrix}  
  =
\begin{pmatrix}
   \log \eta^{y}_{N,x}\\
   \log \eta^{x}_{N,y}
  \end{pmatrix}\,.
\]
The distribution function of $ \eta^{y}_{N,x} $ is
\begin{eqnarray} 
\mathsf{Pr}\big(\eta^{y}_{N,x}\leq u\big)&=&1- \mathsf{Pr}\big(\eta^{y}_{N,x}> u\big) \label{F Nx u 1}\nonumber \\
&=&1- \mathsf{Pr}\bigg(\min\Big\{\Big[\min\limits_{k\in\{1,\ldots,N\}\setminus\{i,j\}}\rho(x,X_{k})\Big],\rho(x,y)\Big\}>r_{N}(u)\bigg)\,\nonumber\\
&=&1- \mathds{1}\Big[\rho(x, y)>r_{N}(u)\Big] \prod\limits_{k\in\{1,\ldots,N\}\setminus\{i,j\}}\mathsf{Pr}\Big[\rho(X_{k},x)>r_{N}(u)\Big]\,\nonumber\\
&=&1- \mathds{1}\Big[\rho(x, y)>r_{N}(u)\Big] \Big[\mathsf{Pr}\Big(\rho(\xi,x)>r_{N}(u)\Big)\Big]^{N-2} \label{F Nx u 2}\nonumber \\ 
&=&1- \mathds{1}\Big[\rho(x, y)>r_{N}(u)\Big] \Big[1-\mathsf{P}_{N,x}(u)\Big]^{N-2}\,\nonumber\\
&:=& F^{y}_{N,x}(u) \,.\label{F Nx u 3}
\end{eqnarray} 
Analogously, we obtain the distribution function of $ \eta^{x}_{N,y} $:
\begin{eqnarray}
F^{x}_{N,y}(w)&:=&\mathsf{Pr}\big(\eta^{x}_{N,y}\leq w\big)= 1- \mathds{1}\Big[\rho(x, y)>r_{N}(w)\Big] \Big[1-\mathsf{P}_{N,y}(w)\Big]^{N-2}  \,.\label{F Ny w}
\end{eqnarray}
$ F^{y}_{N,x}(u) $ and $ F^{x}_{N,y}(w) $ represent the marginal distributions of the random variable $\eta^{y}_{N,x}  $ and  $\eta^{x}_{N,y}  $ respectively. Recall that $ F^{}_{N,x,y}:=F^{}_{N,x,y}(u,w) $ is a cumulative distribution function of $ \eta^{}_{N,x,y}=\big(\eta^{y}_{N,x}, \eta^{x}_{N,y}\big) $, when $ (x,y)\in \mathcal{A}_{1}$. Substituting Eqs.(\ref{F Nx u 3}) and (\ref{F Ny w}) into (\ref{CDF 4 infty}), we obtain $ \eta^{y}_{N,x} \xrightarrow[\text{}]{law}\,\eta^{}_{x} $ and $ \eta^{x}_{N,y} \xrightarrow[\text{}]{law}\,\eta^{}_{y} $ (or $\eta^{}_{N,x,y} \xrightarrow[\text{}]{law}\,\eta^{}_{x,y}$), $ N\rightarrow \infty    $.
Note that a random variable's convergence in law is preserved under the continuous mapping. Thus, we can obtain
\begin{eqnarray}\label{convergency in law joint expectation}
\big(\log^{2}\eta^{y}_{N,x}\big)\big(\log^{2}\eta^{x}_{N,y}  \big)\xrightarrow[\text{}]{law}\,\big(\log^{2}\eta^{}_{x}\big)\big(\log^{2}\eta^{}_{y}\big) \,,\quad N\rightarrow \infty \,.
\end{eqnarray}
Let us rule out a set of zero probability, 
i.e. $ \eta^{}_{N,x,y}=\big(\eta^{y}_{N,x}, \eta^{x}_{N,y}\big)=(0,0) $.
\begin{lemma}\label{eta>0 both}
For each $ (x,y)\in\mathcal{A} $, $ x\neq y $, one has $ \eta^{y}_{N,x}>0 $ and $ \eta^{x}_{N,y}>0 $ almost sure.
\end{lemma}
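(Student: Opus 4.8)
The plan is to exploit the fact that the prefactor $(N-1)V_{d}e^{\gamma}$ appearing in Eqs.(\ref{eta y,i,j N,x 1}) and (\ref{eta x,i,j N,y 1}) is a strictly positive constant, so that the sign of each of $\eta^{y}_{N,x}$ and $\eta^{x}_{N,y}$ is governed entirely by the minimum that multiplies it. It therefore suffices to show that, for fixed $(x,y)\in\mathcal{A}$ with $x\neq y$, this minimum is strictly positive with probability one, where the only randomness comes from the sample points $X_{3},\ldots,X_{N}$.

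Consider first $\eta^{y}_{N,x}$, whose governing quantity is $\min\big\{\min_{k=3,\ldots,N}\rho^{d}(x,X_{k}),\,\rho^{d}(x,y)\big\}$. Since $x\neq y$, the Euclidean distance $\rho(x,y)$ is strictly positive, so the term $\rho^{d}(x,y)>0$ deterministically. For the remaining term I would use that each $X_{k}$, $k=3,\ldots,N$, is absolutely continuous with respect to Lebesgue measure, having density $f$; hence the singleton $\{x\}$, being Lebesgue-null, satisfies $\mathsf{Pr}(X_{k}=x)=0$, and consequently $\rho(x,X_{k})>0$ almost surely for each such $k$. Because only finitely many indices are involved, the event $\big\{\min_{k=3,\ldots,N}\rho^{d}(x,X_{k})=0\big\}$ is a finite union of null events, hence null, so $\min_{k=3,\ldots,N}\rho^{d}(x,X_{k})>0$ almost surely.

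Combining the two observations, the minimum of a deterministically positive quantity and an almost-surely positive quantity is itself almost surely positive, whence $\eta^{y}_{N,x}>0$ almost surely. The argument for $\eta^{x}_{N,y}$ is identical upon interchanging the roles of $x$ and $y$ (and of $u$ and $w$), using that $x\neq y$ again keeps $\rho^{d}(x,y)$ away from zero and that no $X_{k}$ coincides with the centre $y$ almost surely. I do not expect a substantive obstacle here: the single point requiring care is the no-atom property of $f$, which is exactly what guarantees that none of the sampled points $X_{3},\ldots,X_{N}$ lands on the fixed centre, while the hypothesis $x\neq y$ handles the boundary term $\rho^{d}(x,y)$ separately from the random part.
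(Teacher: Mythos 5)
Your argument is correct and rests on exactly the same two facts as the paper's proof in Appendix A.7: the hypothesis $x\neq y$ forces $\rho^{d}(x,y)>0$, and absolute continuity of the law of the $X_{k}$ gives $\mathsf{Pr}(X_{k}=x)=0$, so the finite minimum is almost surely positive. The only difference is presentational — you argue directly on the event level, whereas the paper substitutes $u=0$ into the previously derived distribution function $F^{y}_{N,x}(u)$ — so this is essentially the same proof.
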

\begin{proof}[Proof of Lemma \ref{eta>0 both}] Provided in the Appendix A.7.
\end{proof}
The domain of the joint CDF $ F_{N,x,y}(u,w) $ is $ u,w \in [1,\infty) $, which can be analytically extended on $ u,w \in [0,\infty) $. Using the non-negativity of CDF, we have 
\begin{eqnarray}
\int_{(1,\infty)}\int_{(1,\infty)}\big(\log^{2} u\big)\big(\log^{2} w  \big) dF_{N,x,y}(u,w) &\leq& \int_{(0,\infty)}\int_{(0,\infty)}\big(\log^{2} u\big)\big(\log^{2} w  \big)  dF_{N,x,y}(u,w) \nonumber\\
&=& \mathsf{E}\big[\big(\log^{2}\eta^{y}_{N,x}\big)\big(\log^{2}\eta^{x}_{N,y}  \big)\big] \,. \nonumber
\end{eqnarray} 
Therefore, the proof of $ \mathsf{E}\big[\big(\log^{2}\eta^{y}_{N,x}\big)\big(\log^{2}\eta^{x}_{N,y}  \big)\big]<\infty$ is sufficient to guarantee the finiteness of the expectation on the truncated domain. 
From Eqs.(\ref{rNu,rNw,BxBy})-(\ref{CDF 4 infty}), we have the independence of $\big\{\zeta_{i\,|\,j}(N)\big\}_{i,j\in\{1,2,\ldots,N\}}  $ when $ N\geq\widetilde{N}_{0} $. Note that any functions of independent variables are also independent. Thus, we have
\begin{eqnarray}
\mathsf{E}\Big[\big(\log^{2}\eta^{y}_{N,x}\big)\big(\log^{2}\eta^{x}_{N,y}  \big)\Big]
&=&\mathsf{E}\Big({\zeta_{1\,|\,2}^{2}(N)}{\zeta_{2\,|\,1}^{2}(N)} \Big| X_{1}=x, X_{2}=y \Big) \nonumber \\
&=&\mathsf{E}\Big({\zeta_{1\,|\,2}^{2}(N)}\Big| X_{1}=x, X_{2}=y \Big) \mathsf{E}\Big({\zeta_{2\,|\,1}^{2}(N)} \Big| X_{1}=x, X_{2}=y \Big) \nonumber \\
&:=&\mathsf{E}\big(\log^{2}\eta^{y}_{N,x}\big)\mathsf{E}\big(\log^{2}\eta^{x}_{N,y}  \big) \,. \label{expectation joint N}
\end{eqnarray}
Clearly, the independence holds for logarithm and power transformations:
\begin{eqnarray}
\eta_{x}\perp \eta_{y}\quad \Rightarrow \quad \log \eta_{x}\perp \log \eta_{y}\quad \text{and}\quad\big(\log^{2} \eta_{x}\big)\perp \big(\log^{2} \eta_{y}\big)\,.\label{expectation joint independence}
\end{eqnarray}
Thus, we have
\begin{eqnarray} \label{expectation joint independence 2}
\mathsf{E}\Big[\big(\log^{2}\eta_{x}\big)\big(\log^{2}\eta_{y}  \big)\Big]&=&\mathsf{E}\big(\log^{2}\eta_{x}\big)\mathsf{E}\big(\log^{2}\eta_{y}  \big) \nonumber\\
&=&\bigg(\log^{2}f(x)+\frac{\pi^{2}}{6}\bigg)\bigg(\log^{2}f(y)+\frac{\pi^{2}}{6}\bigg)\,.
\end{eqnarray}
We need to prove that
\begin{eqnarray}\label{expectation joint N limit}
\lim\limits_{N\rightarrow\infty} \mathsf{E}\Big[\big(\log^{2}\eta^{y}_{N,x}\big)\big(\log^{2}\eta^{x}_{N,y}  \big)\Big]=\mathsf{E}\Big[\big(\log^{2}\eta_{x}\big)\big(\log^{2}\eta_{y}  \big)\Big]\,.
\end{eqnarray}
Let us define $ \mathcal{A}_{1,M}:=\{(x,y)\in\mathcal{A}_{1}: \rho(x,y)>M\}$ and to prove the equality in Eq.(\ref{expectation joint N limit}), we need to show that $ \forall\, M>0 $ and all $ (x,y)\in\mathcal{A}_{1,M} $, the following two properties hold.
\begin{enumerate}[(i)]
\item Convergence in law when $N\rightarrow \infty  $, i.e.
\begin{eqnarray} \label{convergence covariance squared 1}
\mathsf{E}\Big[\big(\log\eta^{y}_{N,x}\big)^{2}\big(\log\eta^{x}_{N,y}  \big)^{2}\Big]\xrightarrow[N\rightarrow \infty]{ law}\, \bigg(\log^{2}f(x)+\frac{\pi^{2}}{6}\bigg)\bigg(\log^{2}f(y)+\frac{\pi^{2}}{6}\bigg)\,.\label{convergence in law expectation joint}
\end{eqnarray}
\item Uniform integrability of $ \Big\{\big(\log\eta^{y,i}_{N,x}\big)^{2}\big(\log\eta^{x,i}_{N,y}  \big)^{2}\Big\} $\,.
\end{enumerate}
\subsubsection{Proof of convergence in law (i)}
Under the continuous mapping, random variables' convergence in law is preserved. From the convergence in law in Eq.(\ref{convergency in law joint expectation}) and the independence in Eq.(\ref{expectation joint independence}), we can conclude
\begin{eqnarray}\label{E eta_x^y,2 eta_y^x,2}
\mathsf{E}\Big[\big(\log^{2}\eta^{y}_{N,x}\big)\big(\log^{2}\eta^{x}_{N,y}  \big)\Big]&\xrightarrow[\text{}]{law}\,&\mathsf{E}\Big[\big(\log^{2}\eta_{x}\big)\big(\log^{2}\eta_{y}  \big)\Big]\,,\quad\quad N\longrightarrow\infty \nonumber\\
&=&\mathsf{E}\big(\log^{2}\eta_{x}\big)\mathsf{E}\big(\log^{2}\eta_{y}  \big) \nonumber\\
&=&\bigg(\log^{2}f(x)+\frac{\pi^{2}}{6}\bigg)\bigg(\log^{2}f(y)+\frac{\pi^{2}}{6}\bigg)\,.
\end{eqnarray}
Thus, we proved the Eq.(\ref{convergence in law expectation joint}).
\subsubsection{Proof of uniformly integrability (ii)}\label{section UI power4 Cov}
According to the de la Valle Poussin theorem, proving the uniform integrability of \\
$ \Big\{\big(\log\eta^{y}_{N,x}\big)^{2}\big(\log\eta^{x}_{N,y}  \big)^{2}\Big\} $ is sufficient to show that for $ \mu-$ almost every $ x,y \in S(f) $, a positive $ C_{0}(x,y) $ and $ N_{0}(x,y)\in\mathbb{N} $.
\begin{eqnarray}\label{uniform integrability joint}
\sup_{N\geq N_{0(x,y)}} \mathsf{E} G \Big[\Big|\big(\log^{2} \eta^{y}_{N,x}\big)\big(\log^{2} \eta^{x}_{N,y}  \big)\Big|\Big]\leq C_{0}(x,y)<\infty \,.
\end{eqnarray}
$ G(t) $ is defined in Eq.(\ref{G(t)}), which is a convex and non-decreasing function on $ [0,\infty) $, and $ {G(t)}/{t} \rightarrow \infty $ as $ t\rightarrow \infty $.
Note that $ \forall\, a, b\in\mathbb{R} $, we have $  
\big| ab \big|\leq \dfrac{a^{2}+b^{2}}{2} 
$ 
and according to Jenson Inequality, for convex function $ G(\cdot) $ and together with its expectation, we have:
\begin{eqnarray}\label{EG eta_x^y,2 eta_y^x,2}
\mathsf{E} G \Big[\Big| \big(\log^{2}\eta^{y}_{N,x}\big)\big(\log^{2}\eta^{x}_{N,y}  \big)\Big|\Big] 
 &\leq &\dfrac{\mathsf{E} G \big(\log^{4}\eta^{y}_{N,x}\big)+\mathsf{E} G \big(\log^{4}\eta^{x}_{N,y}  \big)}{2} \,.
\end{eqnarray}
From the definition of $ G(\cdot) $ in Eq.(\ref{G(t)}), one has that $  G \big(\log^{4}\eta^{y}_{N,x}\big)= \big(\log^{4}\eta^{y}_{N,x}\big)\log \big(\log^{4}\eta^{y}_{N,x}\big)$ if $ \eta^{y}_{N,x}\in\big(0,\frac{1}{e}\big]\cup \big[e,\infty) $ and $  G \big(\log^{4}\eta^{y}_{N,x}\big)=0$ otherwise.
We apply the results of \textit{Lemma} \ref{lemma power 4} to derive the expectation of $ G \big(\ln\eta^{y}_{N,x}\big)^{4} $ as follows.
\begin{eqnarray}
\mathsf{E} G \big(\log^{4}\eta^{y}_{N,x}\big)
&=&4\int_{\big(0,\frac{1}{e}\big]} (\log^{4} u)\log(-\log u) d F^{y}_{N,x}(u)+4\int_{[e,\infty)}(\log^{4} u)\log(\log u) d F^{y}_{N,x}(u) \nonumber \\
&=&4\underbrace{\int_{\big(0,\frac{1}{e}\big]}F^{y}_{N,x}(u) \frac{-\log^{3} u}{u}\bigg[\log(-\log u)+\frac{1}{4}\bigg] du }_{:={I}_{1}^{y}(N,x)} \nonumber\\
&& + 4\underbrace{\int_{[e,\infty)}\big[1-F^{y}_{N,x}(u)\big]\frac{\log^{3} u}{u}\bigg[\log(\log u)+\frac{1}{4}\bigg] d u }_{:={I}_{2}^{y}(N,x)}                \label{I1 y N,x} \\
&:=& 4\big[{I}_{1}^{y}(N,x)+{I}_{2}^{y}(N,x)\big] \,.\label{I1 x X,y}
\end{eqnarray}
\subsubsection{Finiteness of $ I_{1}^{y}(N,x) $}\label{section I1 y N,x}
For $ (x,y)\in\mathcal{A}_{1,M} $, one has $\rho(x,y)>M  $. Thus, if $ M\geq r_{N}(u) $, one has $$ \mathds{1}\big[\rho(x, y)>r_{N}(u)\big]=1 $$\,.
Note that $ M\geq r_{N}(u) \Leftrightarrow N\geq \frac{u}{M^{d}V_{d}e^{\gamma}}+1 $. 
We define
\begin{eqnarray}\label{N1 M}
N_{1}^{c}=N_{1}^{c}(M):=\max \bigg\{\bigg\lceil\frac{1}{M^{d}V_{d}e^{(\gamma +1)}}+1\bigg\rceil, \widetilde{N}_{0}\bigg\}\,.
\end{eqnarray}
$ N_{1}^{c}$ does not depend on $ x $ and $ y $. For $ N\geq N_{1}^{c}$, $ F^{y}_{N,x}(u) $ in Eq.(\ref{F Nx u 3}) is rearranged as:
\begin{eqnarray}
F^{y}_{N,x}(u) &=&1-\Big[1-\mathsf{P}_{N,x}(u)\Big]^{N-2} 
\\
&\leq & \big[(N-2)\mathsf{P}_{N,x}(u)\big]^{\varepsilon_{1}}\,,\qquad \varepsilon_{1}\in(0,1]\,,  \label{F y Nx u 2} \\
&\leq & \big[(N-2)r_{N}^{d}(u)V_{d}M_{f}(x,R_{1})\big]^{\varepsilon_{1}}\,,\qquad u\in(0,1/e]\,, \,\, N\geq {\widetilde{N}}_{1}\,, \label{F y Nx u 3} \\
&=& \bigg[\dfrac{N-2}{N-1}\cdot\dfrac{M_{f}(x,R_{1})}{e^{\gamma}}u\bigg]^{\varepsilon_{1}} \label{F y Nx u 4}\\
&\leq & \dfrac{M_{f}^{\varepsilon_{1}}(x,R_{1})}{e^{\gamma\varepsilon_{1}}}u^{\varepsilon_{1}} \label{F y Nx u 5}\,,\qquad \text{with `$=$' holds only when}\,\, N\rightarrow \infty\,,
\end{eqnarray}
where $r_{N}(u)$, $\mathsf{P}_{N,x}(u)$, $R_{1}$ and $ M_{f}(x,R_{1}) $ are defined in Eqs.(\ref{rNu e PNx})-(\ref{M_f(x,R_1)}). $ \widetilde{N}_{1} $ denotes the greater value of $ N_1$ in (\ref{N_1}) and $ N_{1}^{c} $ in (\ref{N1 M}).
\begin{eqnarray}\label{N1 M R1}
\widetilde{N}_{1}=\widetilde{N}_{1}(M, R_{1}):=\max \big\{N_{1}^{c}(M), N_{1}(R_{1})\big\}  \,.
\end{eqnarray}
Note that $ \widetilde{N}_{1} $ does not depend on $ x $ and $ y $. Analogous to the investigation in \text{Section \ref{section I1}}, we obtain
\begin{align}\label{I1 y N,x bounded}
I_{1}^{y}(N,x) <\infty\,.
\end{align}
\subsubsection{Finiteness of $ I_{2}^{y}(N,x)<\infty $}\label{section I2 y N,x}
We split the integral $ \int_{[e,\infty]} $ into $ \int_{[e,\sqrt{N-1}]}+\int_{(\sqrt{N-1},\infty)} $ as follows.
\begin{eqnarray}\label{I_2(N,x) 1}
I^{y}_{2}(N,x)
&=& J^{y}_{1}(N,x)+J^{y}_{2}(N,x)
\end{eqnarray}
with
\begin{eqnarray}
J^{y}_{1}(N,x)&=&\int\limits_{[e,\sqrt{N-1}]}[1-F^{y}_{N,x}(u)] \dfrac{(\log u)^{3}}{u}\bigg[\log(\log u)+\frac{1}{4}\bigg]du\,, \label{J_1 y} \\
J^{y}_{2}(N,x)&=&\int\limits_{(\sqrt{N-1},\infty)}[1-F^{y}_{N,x}(u)] \dfrac{(\log u)^{3}}{u}\bigg[\log(\log u)+\frac{1}{4}\bigg]du\,. \label{J_2 y}
\end{eqnarray}
\paragraph{Finiteness of $J^{y}_{1}(N,x)$}\label{proof of Jy1 N,x}
From the Eq.(\ref{F Ny w}), we have
\begin{eqnarray}\label{1-F y Nx u}
1- F^{y}_{N,x}(u)&=& \mathds{1}\Big[\rho(x, y)>r_{N}(u)\Big] \Big[1-\mathsf{P}_{N,x}(u)\Big]^{N-2} \nonumber\\ 
&\leq & \Big[1-\mathsf{P}_{N,x}(u)\Big]^{N-2} \,.\label{1-F y Nx u} 
\end{eqnarray}
Note that $ \dfrac{1}{2}\leq \dfrac{N-2}{N-1}< 1 $ for $ N\in[3,\infty) $.
We define a finite number 
\begin{align}\label{N_2 tilde}
\widetilde{N}_{2}=\max\{N_{2},\widetilde{N}_{0}\}  
\end{align}
with $ N_{2} $ and $ \widetilde{N}_{0}  $ defined in Eqs.(\ref{N_2 R_2}) and (\ref{N_0 tilde}). For $ N\geq \widetilde{N}_{2} $, we can derive that
\begin{eqnarray}
1-F_{N,x}^{y}(u) 
&\leq & \exp\bigg\{-\frac{N-2}{N-1}\dfrac{u}{e^{\gamma}}\,m_{f}(x,R_{2})\bigg\} \nonumber \\
&\leq & e^{-\dfrac{u}{2e^{\gamma}}\,m_{f}(x,R_{2})} 
\nonumber\\
&\leq & \bigg[\dfrac{u}{2e^{\gamma}}\,m_{f}(x,R_{2})\bigg]^{-\varepsilon}\,,\qquad\text{}\, \varepsilon\in(0,e]\,, \label{1-Fy_Nx(u) 2}
\end{eqnarray}
where $ R_{2} $ and $ m_{f}(x,R_{2}) $ are defined in Eqs.(\ref{R_2}) and (\ref{M_f e m_f}). Substituting Eq.(\ref{1-Fy_Nx(u) 2}) into (\ref{J_1 y}), we obtain
\begin{eqnarray}
J^{y}_{1}(N,x) 
& = & \dfrac{(2e^{\gamma})^{\varepsilon}}{m_{f}^{\varepsilon}(x,R_{2})}\,L(\varepsilon)<\infty\,,\qquad\text{}\, \varepsilon\in(0,e]\,, \label{J_1 y part2 eq3}
\end{eqnarray}
where the function $L(\cdot)  $ is defined in Eq.(\ref{Lv1}). 
The detailed derivation is analogously to the analysis of $ J_{1}(N,x) $ in \text{Section \ref{subparagraph J_2 2.1}}.
\paragraph{Finiteness of $J_{2}^{y}(N,x)  $}\label{proof of J2y N,x}
For $ u\in[\sqrt{N-1},\infty) $, one has $\mathsf{P}_{N,x}(\sqrt{N-1})\leq \mathsf{P}_{N,x}(u)  $ and the Eq.(\ref{1-F y Nx u}) satisfies:
\begin{eqnarray}
1- F^{y}_{N,x}(u)
&\leq & \Big[1-\mathsf{P}_{N,x}(\sqrt{N-1})\Big]^{N-3} \Big[1-\mathsf{P}_{N,x}(u)\Big] \,.\label{1-Fy_Nx(u) v2 1}
\end{eqnarray}
Substituting Eq.(\ref{1-Fy_Nx(u) v2 1}) into 
(\ref{J_2 y}), we obtain
\begin{eqnarray}\label{J_2 2 y}
J_{2}^{y}(N,x)&\leq & 
J_{2.1}^{y}(N,x)\cdot J_{2.2}^{y}(N,x)
\end{eqnarray}
with
\begin{eqnarray}
J_{2.1}^{y}(N,x)&:=&\big[1-\mathsf{P}_{N,x}(\sqrt{N-1})\big]^{N-3}\,,\label{J_2 2.1 1 y}\\
J_{2.2}^{y}(N,x)&:=&\int_{(\sqrt{N-1},\infty)}\big[1-\mathsf{P}_{N,x}(u)\big] \dfrac{(\log u)^{3}\big[\log(\log u)+\frac{1}{4}\big]}{u}du\,. \label{J_2 2.2 1 y}
\end{eqnarray}
\subparagraph{Investigation of $J_{2.1}^{y}(N,x)$}
Note that the inequality $ \dfrac{1}{3}\leq \dfrac{N-3}{N-1}< 1 $ holds for any $ N\in(3,\infty)$. For $ N\geq\widetilde{N}_{2} $, we can deduce the following results
\begin{eqnarray}
J_{2.1}^{y}(N,x)&\leq &  
\underbrace{\dfrac{(3e^{\gamma})^{\varepsilon}}{m_{f}^{\varepsilon}(x,R_{2})}\,(N-1)^{-\varepsilon/2}}_{=J_{2.1}^{y,\ast}(N,x) }
,\qquad\text{}\, \varepsilon\in(0,e] \nonumber \\
&\leq &\dfrac{(3e^{\gamma})^{\varepsilon}}{m_{f}^{\varepsilon}(x,R_{2})}\,(\widetilde{N}_{2}-1)^{-\varepsilon/2}
< \infty\,.  \label{J_2 2.1 1 y final}
\end{eqnarray}
The detailed deduction is analogously to the analysis of $ J_{2.1}(N,x) $ in \text{Section \ref{subparagraph J_2 2.2}}.
\subparagraph{Investigation of $J_{2.2}^{y}(N,x)$}
Note that the settings of  $ J_{2.2}^{y}(N,x) $ is $ N\geq 3 $, which is different from the setting of $ J_{2.2}(N,x) $ in Eq.(\ref{J_2 2.2 1}). The detailed investigation, analogously to the analysis in the \text{Section \ref{subparagraph J_2 2.2}}, which is based on $ N=2 $, can be obtained by defining $ \tilde{u}=\dfrac{2u}{N-1}$ and analysing its associated change of variables as follows.
\begin{enumerate}[(i)]
\item 
$
u\in[\sqrt{N-1},\infty)\quad \Longleftrightarrow \quad \tilde{u}=\dfrac{2u}{N-1}\in \Big[\frac{2}{\sqrt{N-1}},\infty\Big)\,,\quad N\geq 3\,,
$
\item 
$
r_{N}(u)=\bigg[\dfrac{u}{(N-1)V_{d}\tilde{\gamma}}\bigg]^{1/d} = \bigg[\dfrac{\tilde{u}}{2V_{d}\tilde{\gamma}}\bigg]^{1/d}=r_{3}(\tilde{u})
$\,,
\item 
$\mathsf{P}_{N,x}(u) =\int_{B(x, r_{N}(u))}f(\xi)d\xi=\int_{B(x, r_{3}(\tilde{u}))}f(\xi)d\xi =\mathsf{P}_{3,x}(\tilde{u}) 
$\,,
\item $ F^{y}_{N,x}(u)=F^{y}_{3,x}(\tilde{u}) $\,.
\end{enumerate} 
We substitute  $ \tilde{u}=\dfrac{2u}{N-1} $ into Eq.(\ref{J_2 2.2 1 y}) and split the integrals as follows.
\begin{align}
J^{y}_{2.2}(N,x)=\underbrace{\int\limits_{(\frac{2}{\sqrt{N-1}},e]}\big[1-\mathsf{P}_{3,x}(\tilde{u})\big] \widetilde{\mathbb{M}}(N,\tilde{u})\dfrac{d\tilde{u}}{\tilde{u}} }_{:=J_{2.2.1}^{y}(N,x)}+\underbrace{\int\limits_{(e,\infty)}\big[1-\mathsf{P}_{3,x}(\tilde{u})\big] \widetilde{\mathbb{M}}(N,\tilde{u})\dfrac{d\tilde{u}}{\tilde{u}} }_{：=J_{2.2.2}^{y}(N,x)}   \label{J_2 2.2 1 y eq2}
\end{align}
with $ \widetilde{\mathbb{M}}(N,\tilde{u})=\log^{3} \dfrac{(N-1)\tilde{u}}{2}\bigg[\log\log\dfrac{(N-1)\tilde{u}}{2}+\dfrac{1}{4}\bigg] $ and
{\normalsize 
\begin{eqnarray} 
&&\hspace{-0cm} J_{2.2.1}^{y}(N,x)=\int_{(\frac{2}{\sqrt{N-1}},e]}\big[1-\mathsf{P}_{3,x}(\tilde{u})\big] \widetilde{\mathbb{M}}(N,\tilde{u})\dfrac{d\tilde{u}}{\tilde{u}}\,,  \label{J_2 2.2.1 eq1 y}\\
&&\hspace{-0cm} J_{2.2.2}^{y}(N,x)=\int_{(e,\infty)}\big[1-\mathsf{P}_{3,x}(\tilde{u})\big] \widetilde{\mathbb{M}}(N,\tilde{u})\dfrac{d\tilde{u}}{\tilde{u}} \,. \label{J_2 2.2.2 eq1 y} 
\end{eqnarray}
}
Note that the integral $ \int_{(\frac{2}{\sqrt{N-1}},e]} $ requires $ \frac{2}{\sqrt{N-1}}\leq e $, and it is satisfied for any $ N\geq 2 $.\\[.3cm]
\textbf{(I) Investigation of $J_{2.2.1}^{y}(N,x)  $}\\[.3cm]
The cumulative distribution function $ \mathsf{P}_{3,x}(\tilde{u})$ is bounded between 0 and 1, and hence
\begin{eqnarray}\label{sup 1-f_2,x N eq1 y}
\sup\limits_{\widetilde{u}\in(\frac{2}{\sqrt{N-1}},e]}\big\{1-\mathsf{P}_{3,x}(\tilde{u})\big\} \leq \sup\limits_{\tilde{u}\in[0,\infty)}\big\{1-\mathsf{P}_{3,x}(\tilde{u})\big\}= 1 \,.
\end{eqnarray}
From the monotonicity of logarithmic and power functions, we have
\begin{eqnarray}
\sup\limits_{\widetilde{u}\in(\frac{2}{\sqrt{N-1}},e]} \widetilde{\mathbb{M}}(N,\tilde{u})&=&\log^{3}\frac{e(N-1)}{2}\bigg[\log\log \frac{e(N-1)}{2}+\dfrac{1}{4}\bigg]\nonumber\\
\hspace{-0.6cm}&:=&\widetilde{\mathbb{M}}^{\ast}_{e}(N)\,. \label{M,N,max y}
\end{eqnarray}
Substituting Eq.(\ref{sup 1-f_2,x N eq1 y}) and (\ref{M,N,max y}) into (\ref{J_2 2.2.1 eq1 y}), we obtain
\begin{eqnarray}
\hspace{-.0cm} J_{2.2.1}^{y}(N,x) &\leq &  \mathbb{\widetilde{M}}^{\ast}_{e}(N) \int\limits_{(\frac{2}{\sqrt{N-1}},e]} \dfrac{d\tilde{u}}{\tilde{u}} \nonumber\\
&=&\log^{3} \frac{e(N-1)}{2}\bigg[\log\log \frac{e(N-1)}{2}+\frac{1}{4}\bigg] \,\log \dfrac{e\sqrt{N-1}}{2}   \label{J_2 2.2.1 eq3 final y} \nonumber \\
&:=& J_{2.2.1}^{y,\ast}(N)\,. \label{J_2 2.2.1 eq3 final y} 
\end{eqnarray}
\textbf{(II) Investigation of $J_{2.2.2}^{y}(N,x)  $}\\[.3cm]
Analogously to the investigation of $ J_{2.2.2}(N,x) $ in Section \ref{subparagraph J_2 2.2}, we split the integral $ \int_{(e,\infty)}  $ into $  \int_{(e,e^{1+\Delta}]}+\int_{(e^{1+\Delta},\infty)}$, where $ \Delta$ represents a small positive number.
\begin{eqnarray}
J_{2.2.2}^{y}(N,x) 
=J_{2.2.2}^{y,(1)}(N,x)+J_{2.2.2}^{y,(2)}(N,x)  \label{J_2 2.2.2 eq2 y}
\end{eqnarray}
with
\begin{eqnarray}
&&\hspace{-1.6cm} J_{2.2.2}^{y,(1)}(N,x)=\int_{(e,e^{1+\Delta}]}[1-\mathsf{P}_{3,x}(\tilde{u})\big]\, \widetilde{\mathbb{M}}(N,\tilde{u})\,\dfrac{d\tilde{u}}{\tilde{u}}\,, \label{J_2 2.2.2 (1) y}\\
&&\hspace{-1.6cm} J_{2.2.2}^{y,(2)}(N,x)=\int_{(e^{1+\Delta},\infty)}[1-\mathsf{P}_{3,x}(\tilde{u})\big]\, \widetilde{\mathbb{M}}(N,\tilde{u})\,\dfrac{d\tilde{u}}{\tilde{u}}\,. \label{J_2 2.2.2 (2) y}
\end{eqnarray}
\textbf{(II.1) Investigation of $J_{2.2.2}^{y,(1)}(N,x) $}\\[.3cm]
From the monotonicity of logarithmic and power functions, we have
\begin{eqnarray}
\sup\limits_{\tilde{u}\in(e,e^{1+\Delta}]}\, \widetilde{\mathbb{M}}(N,\tilde{u}) &=&\log^{3} \frac{e^{1+\Delta}(N-1)}{2}\bigg[\log\log \frac{e^{1+\Delta}(N-1)}{2}+\frac{1}{4}\bigg]  \nonumber\\
&:=&\widetilde{\mathbb{M}}^{\ast}_{e^{1+\Delta}}(N)\,. \label{sup J_2.2.2 (1) part1 y}
\end{eqnarray}
Substituting Eqs.(\ref{sup 1-f_2,x N eq1 y}) and (\ref{sup J_2.2.2 (1) part1 y}) into (\ref{J_2 2.2.2 (1) y}), we obtain
\begin{eqnarray}
 J_{2.2.2}^{y,(1)}(N,x)&\leq & \widetilde{\mathbb{M}}^{\ast}_{e^{1+\Delta}}(N) \int_{(e,e^{1+\Delta}]}\dfrac{d\tilde{u}}{\tilde{u}} \nonumber\\
  &=&  
  \log^{3} \frac{e^{1+\Delta}(N-1)}{2}\bigg[\log\log \frac{e^{1+\Delta}(N-1)}{2}+\frac{1}{4}\bigg]\,\Delta   \label{J_2 2.2.2 eq3 (1) final y}  \nonumber\\
 &:=& J_{2.2.2}^{y,(1),\ast}(N)\,. \label{J_2 2.2.2 eq3 (1) final y}
\end{eqnarray}
\textbf{(II.2) Investigation of $J_{2.2.2}^{y,(2)}(N,x)$}\\[.3cm]
Replacing $ w $, $ P_{2,x}(w) $, $ F_{2,x}(w) $ and $ {\mathbb{M}}^{}(N,x)$ with $ \tilde{u}/2 $, $ P_{3,x}(\tilde{u}) $, $ F_{3,x}(\tilde{u}) $ and $ \widetilde{\mathbb{M}}^{}(N,x)$ in Part (II.2) in Section \ref{subparagraph J_2 2.2}, we obtain that.
\begin{eqnarray}\label{J_2 2.2.2 (2) eq4 y}
 J_{2.2.2}^{y,(2)}(N,x)< \bigg(\widetilde{\Delta}+4\log\log \dfrac{(N-1)}{2}\bigg)\,\widetilde{\Psi}(N,x)\,,
\end{eqnarray}
\begin{align} \label{M_J 2.2.2 (2) P1 eq2 y}
\hspace{-1cm}\text{with}\quad \widetilde{\Psi}(N,x)=& \sum\limits_{\alpha=1}^{4}\Bigg\{\theta_{\alpha}\bigg[\log^{(4-\alpha)}\frac{(N-1)}{2}\bigg]\cdot \nonumber\\
& \int_{(e^{1+\Delta},\infty)}\dfrac{\log^{(\alpha-1)} \tilde{u}}{\tilde{u}}\big[\log\log \tilde{u}+\tfrac{1}{4}\big] \big[1-\mathsf{P}_{3,x}(\tilde{u})\big]d\tilde{u}\Bigg\}
\end{align}
with $ \theta_{1}=\theta_{4}=1 $ and $ \theta_{2}=\theta_{3}=3 $.
Recall from ($ \mathrm{iv} $) 
that we have 
\begin{eqnarray}\label{1-F y N=3,x tilde u}
 1- F^{y}_{3,x}(\tilde{u})&=& \mathds{1}\Big[\rho(x, y)>r_{3}(\tilde{u})\Big] \Big[1-\mathsf{P}_{3,x}(\tilde{u})\Big] \nonumber\\ 
 &=& \begin{cases}
     1-\mathsf{P}_{3,x}(\tilde{u}), & \qquad \rho(x, y)>r_{3}(\tilde{u})\,, \\[2\jot]
    0, &\qquad  \rho(y,x)\leq r_{3}(\tilde{u})\,.
  \end{cases} 
\end{eqnarray}
Therefore, when $ \rho(y,x)\leq r_{3}(\tilde{u}) $, we have $ F^{y}_{3,x}(\tilde{u})=F^{y}_{N,x}(u)=1 $, which indicates $J^{y}_{2}(N,x) $ in Eq.(\ref{J_2 y}) is equal to zero, i.e.
\begin{eqnarray}
J^{y}_{2}(N,x)&=&\int_{(\sqrt{N-1},\infty)}[1-F^{y}_{N,x}(u)] \dfrac{(\log u)^{3}}{u}\bigg[\log(\log u)+\frac{1}{4}\bigg]du  
=0\,.
\end{eqnarray}
Obviously, 
the boundedness of $J^{y}_{2}(N,x)$ in Section \ref{proof of J2y N,x} 
is proved in this case. Let us focus on the the case $ \rho(x, y)>r_{3}(\tilde{u}) $, and then we have 
\begin{eqnarray}\label{F_3=P_3}
F^{y}_{3,x}(\tilde{u})=\mathsf{P}_{3,x}(\tilde{u})\,.
\end{eqnarray}
Substituting Eq.(\ref{F_3=P_3})  into (\ref{M_J 2.2.2 (2) P1 eq2 y}) and by \textit{Lemma} \ref{lemma power 3}, \textit{Lemma} \ref{lemma power 4},  we obtain 
\begin{align}\label{EGlog power y}
\int_{(e^{1+\Delta},\infty)}\dfrac{\log^{(\alpha-1)}\tilde{u} }{\tilde{u}}\big[\log\log \tilde{u}+\tfrac{1}{4}\big] \big[1-F^{y}_{3,x}(\tilde{u})\big]d\tilde{u}\leq \frac{R^{(\alpha)}_{y}(x)}{\alpha^{2}},\,\,\,\alpha=1,2,3,4.
\end{align}
with
\begin{align}
R^{(\alpha)}_{y}(x):=\mathsf{E} G(\mid\log \tilde{u}\mid^{\alpha})=\int_{\mathbb{R}^{d}}\mid\log \tilde{u}\mid^{\alpha}\log\mid\log \tilde{u}\mid^{\alpha}dF^{y}_{3,x}(\tilde{u})\,. 
\end{align}
Note that $ u $ represents $ \eta^{y}_{N,x} $ in this section. 
Recall that $ \tilde{u} = 2u/(N-1)$ and we have
\begin{eqnarray} 
\tilde{u}=\dfrac{2 \eta^{y}_{N,x} }{N-1} &=& 2V_{d}\tilde{\gamma}\min\Big\{\Big[\min\limits_{k\in\{1,\ldots,N\}\setminus\{i,j\}}\rho^{d}(X_{k},x)\Big],\rho^{d}(y,x)\Big\} \,,\quad N\geq 3\nonumber\\
&\leq & 2V_{d}\tilde{\gamma}\min\Big\{\rho^{d}(X_{3},x),\rho^{d}(y,x)\Big\} \,,\quad N= 3\,,X_{2}=y \nonumber\\
&=&\eta^{y}_{3,x} \,.\label{w<xi_3,x y 2}
\end{eqnarray}
For $ \tilde{u}\in (e,\infty) $, one has $ |\log \tilde{u}|\leq  |\xi_{3,x}| $ and hence, $ \mathsf{E}G|\log \tilde{u}|^{\alpha}\leq \mathsf{E} G|\eta^{y}_{3,x}|^{\alpha}$, $\alpha\geq 0  $.
\begin{lemma}\label{G_a+b v2}
For $ x\in\mathbb{R}^{d} $ and $ \alpha=1,2,3,4 $, there exists constants $ a,b\geq 0 $, such that:
\begin{align}
G(\mid\log^{\alpha}\eta^{y}_{3,x}\mid)\leq a\,G(\mid\log^{\alpha}{\rho}(x,y)\mid)+b\,.
\end{align}
\end{lemma}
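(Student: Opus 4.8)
The plan is to follow the proof of \textit{Lemma} \ref{G_a+b} almost verbatim, the only genuinely new feature being the inner minimum in the definition of $\eta^{y}_{3,x}$. First I would invoke the explicit representation recorded in (\ref{w<xi_3,x y 2}), namely $\eta^{y}_{3,x}=2V_{d}e^{\gamma}r^{d}$ with $r=\min\{\rho(x,X_{3}),\rho(x,y)\}$, so that $\log\eta^{y}_{3,x}=c_{0}+d\log r$ where $c_{0}=\log(2V_{d}e^{\gamma})$. Since $|\log^{\alpha}\eta^{y}_{3,x}|=|\log\eta^{y}_{3,x}|^{\alpha}$ for every $\alpha$, the claim reduces to comparing $G(|c_{0}+d\log r|^{\alpha})$ with a multiple of $G(|\log r|^{\alpha})$ and then passing from $r$ to $\rho(x,y)$.

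The elementary ingredient is a property of the function $G$ in (\ref{G(t)}): because $G$ is non-decreasing, convex and satisfies $G(t)/t\to\infty$, for fixed $c_{0}\in\mathbb{R}$, $d>0$ and $\alpha\in\{1,2,3,4\}$ there are constants $a',b'\ge 0$ with $G(|c_{0}+ds|^{\alpha})\le a'\,G(|s|^{\alpha})+b'$ for all $s\in\mathbb{R}$. This is seen by matching leading behaviours: as $|s|\to\infty$ one has $|c_{0}+ds|^{\alpha}\sim d^{\alpha}|s|^{\alpha}$ and the logarithmic factors agree up to a multiplicative constant, so any $a'>d^{\alpha}$ dominates for large $|s|$, while on any compact $s$-range the left-hand side is bounded and is absorbed into $b'$. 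Applying this with $s=\log r$ gives $G(|\log\eta^{y}_{3,x}|^{\alpha})\le a'\,G(|\log r|^{\alpha})+b'$.

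It then remains to replace $|\log r|$ by $|\log\rho(x,y)|$ through the monotonicity of $G$. Since $r=\min\{\rho(x,X_{3}),\rho(x,y)\}\le\rho(x,y)$, whenever $r\ge 1$ one has $0\le\log r\le\log\rho(x,y)$, and whenever the minimum is attained at $\rho(x,y)$ one has $|\log r|=|\log\rho(x,y)|$; in both situations $G(|\log r|^{\alpha})\le G(|\log\rho(x,y)|^{\alpha})$ and the stated bound follows. The one delicate configuration is $r=\rho(x,X_{3})<\min\{1,\rho(x,y)\}$, i.e. the sample point $X_{3}$ falls closer to $x$ than $y$ does: here $|\log r|$ is governed by $\rho(x,X_{3})$ rather than by $\rho(x,y)$, and applying the same affine comparison with $s=\log\rho(x,X_{3})$ produces an extra term $a'\,G(|\log\rho(x,X_{3})|^{\alpha})$. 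I expect this case to be the main obstacle, since it cannot be dominated by $\rho(x,y)$ pointwise; it is tamed only once the expectation defining $R^{(\alpha)}_{y}(x)$ is taken, where $\mathsf{E}\,G(|\log\rho(x,X_{3})|^{\alpha})=\int G(|\log\rho(x,z)|^{\alpha})f(z)\,dz$ is finite for almost every $x$ under the hypothesis $K_{f,\alpha}(\varepsilon_{0})<\infty$, exactly as in (\ref{G_a+b bounded}). Everything else is the routine affine-in-$\log$ estimate already carried out for \textit{Lemma} \ref{G_a+b} in \textit{Appendix} A.6.
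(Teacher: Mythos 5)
Your proposal follows the same core mechanism as the paper's Appendix A.6 — write $\log\eta^{y}_{3,x}=c_{0}+d\log r$ with $c_{0}=\log(2V_{d}e^{\gamma})$ and absorb the additive constant and the factor $d$ into $a,b$ using the growth and monotonicity of $G$ — and that part is sound. Where you genuinely add something is in the final step. The paper dismisses the proof as ``analogous to Lemma \ref{G_a+b}'', but the analogy is not literal: in Lemma \ref{G_a+b} the variable $\xi_{2,x}$ depends on the single point $y=X_{2}$, whereas $\eta^{y}_{3,x}=2V_{d}e^{\gamma}\min\{\rho^{d}(x,X_{3}),\rho^{d}(x,y)\}$ also depends on $X_{3}$. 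You correctly observe that on the configuration $\rho(x,X_{3})<\min\{1,\rho(x,y)\}$ the quantity $|\log\eta^{y}_{3,x}|$ is driven by $\rho(x,X_{3})$, so the stated pointwise bound by $G(|\log^{\alpha}\rho(x,y)|)$ alone cannot hold (fix $x,y$ and let $X_{3}\to x$: the left side diverges while the right side stays constant). The correct conclusion is the one you give: an extra term $a\,G(|\log^{\alpha}\rho(x,X_{3})|)$ must be carried — for instance via $|\log\min\{A,B\}|\le\max\{|\log A|,|\log B|\}$ and monotonicity of $G$ — and that term is only controlled after integrating over $X_{3}\sim f$, where $K_{f,\alpha}(\varepsilon_{0})<\infty$ in (\ref{condition K}) makes $\int G(|\log^{\alpha}\rho(x,z)|)f(z)\,dz$ finite for almost every $x$; this expectation-level bound is exactly what the application to $R^{(\alpha)}_{y}(x)$ requires. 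So your proof is correct in substance and is in fact more careful than the paper's one-line reference: it repairs the statement rather than merely reproving it.
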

\begin{proof}[Proof of Lemma \ref{G_a+b v2}] 
The proof is analogous to the proof of \textit{Lemma} \ref{G_a+b}, and thus, it is skipped here.
\end{proof}
The \textit{Lemma} \ref{G_a+b v2} implies that $  \mathsf{E}G(\mid\log^{\alpha}\eta^{y}_{3,x}(x,y)\mid)\leq a\,\mathsf{E}G(\mid\log^{\alpha}{\rho}(x,y)\mid)+b$. 
Note 
from Eq.(\ref{condition K}) that 
$ K_{f,{\color{black} \alpha}}(\varepsilon_{0}) <\infty\,\Longrightarrow \mathsf{E}G(\mid\log^{\alpha}\rho(x,y)\mid)<\infty $.
Set $\widetilde{\mathcal{A}}_{f,\alpha}:=\{x\in\mathcal{S}(f)\}: \mathsf{E}G|\eta^{y}_{3,x}|^{\alpha}<\infty\,,\,\,\alpha=1,2,3,4.\}$. By \textit{Lemma} \ref{G_a+b v2} 
, one has $ \mu\big(\mathcal{S}(f)\setminus\widetilde{\mathcal{A}}_{f,\alpha}\big)=0 $. Let us define $\widetilde{\mathcal{A}} :=\widetilde{\mathcal{A}}_{f,\alpha}(G)\cap \Lambda (f)\cap \mathcal{S}(f)\cap\mathcal{D}_{f}(R) $, where $\mu\big(\mathcal{S}(f)\setminus\widetilde{\mathcal{A}}\big)=0  $, and for $ x,y\in\widetilde{\mathcal{A}}$, we have
\begin{eqnarray}\label{R_alpha < infty y}
R^{(\alpha)}_{y}(x)<\infty\,,\qquad\alpha=1,2,3,4.\,\,
\end{eqnarray}
Substituting Eqs.(\ref{EGlog power y})  
and (\ref{R_alpha < infty y}) 
into (\ref{M_J 2.2.2 (2) P1 eq2 y}), we obtain   
\begin{align}
\widetilde{\Psi}(N,x)\leq  \widetilde{R}_{y}(N,x) \label{M_J 2.2.2 (2) P1 eq3 y}
\end{align}
\begin{align}
\text{with}\quad \widetilde{R}_{y}(N,x)=&\frac{1}{16}R^{(4)}_{y}(x)+\frac{R^{(3)}_{y}(x)}{3}\log\dfrac{(N-1)}{2}+\frac{3}{4}R^{(2)}(x)\log^{2}\frac{(N-1)}{2}   \nonumber\\
 &+R^{(1)}(x) \log^{3}\frac{(N-1)}{2} \,.\nonumber
\end{align}
Recall from Eq.(\ref{R_alpha < infty y}) that $R^{(\alpha)}_{y}(x)<\infty$, and this implies:
\begin{eqnarray}\label{P_1R y}
\widetilde{R}_{y}(N,x) =  \mathcal{O}\big([\log(N-1)]^{3}\big)\,. 
\end{eqnarray}
Substituting Eqs.(\ref{M_J 2.2.2 (2) P1 eq3 y}) and (\ref{P_1R y}) into (\ref{J_2 2.2.2 (2) eq4 y}), we obtain
\begin{eqnarray}\label{J_2 2.2.2 (2) eq5 final y}
 J_{2.2.2}^{y,(2)}(N,x)&<& \bigg(\widetilde{\Delta}+4\log\log \dfrac{(N-1)}{2}\bigg)\,\widetilde{R}_{y}(N,x)
 \nonumber\\
 & = & \mathcal{O}\big\{\log^{3}(N-1)\big[\log\log(N-1)\big]\big\} := J_{2.2.2}^{y,(2),\ast} (N,x)\,.
\end{eqnarray}
We substitute $ J_{2.2.2}^{y,(1)}(N,x) $ in (\ref{J_2 2.2.2 eq3 (1) final y}) and $ J_{2.2.2}^{y,(2)}(N,x) $ in (\ref{J_2 2.2.2 (2) eq5 final y}) into $J_{2.2.2}^{y}(N,x)$ in (\ref{J_2 2.2.2 eq2 y}):
\begin{eqnarray}\label{J_2 2.2.2 final y}
J_{2.2.2}^{y}(N,x)&=&J_{2.2.2}^{(1)}(N,x)+J_{2.2.2}^{(2)}(N,x)\nonumber\\
&< & 
\log^{3} \frac{e^{1+\Delta}(N-1)}{2}\bigg[\log\log \frac{e^{1+\Delta}(N-1)}{2}+\frac{1}{4}\bigg]\,\Delta \nonumber\\
&&\hspace{.cm}+\bigg(\widetilde{\Delta}+4\log\log \dfrac{(N-1)}{2}\bigg)\,\widetilde{R}_{y}(N,x)
\nonumber\\
&\hspace{.cm}=&\mathcal{O}\big\{\log^{3}(N-1)\big[\log\log(N-1)\big]\big\}:=J_{2.2.2}^{y,\ast}(N,x)\,. 
\end{eqnarray}
We substitute $J_{2.2.1}^{y}(N,x) $ in (\ref{J_2 2.2.1 eq3 final y}) and $J_{2.2.2}^{y}(N,x)  $ in (\ref{J_2 2.2.2 final y}) into $  J_{2.2}^{y}(N,x)$ in (\ref{J_2 2.2 1 y}):
\begin{eqnarray}
J_{2.2}^{y}(N,x)&=&J_{2.2.1}^{y}(N,x)+J_{2.2.2}^{y}(N,x) \nonumber\\
&\leq & 
\log^{3} \frac{e(N-1)}{2}\bigg[\log\log \frac{e(N-1)}{2}+\frac{1}{4}\bigg] \,\log \dfrac{e\sqrt{N-1}}{2}\nonumber\\
&&+\log^{3} \frac{e^{1+\Delta}(N-1)}{2}\bigg[\log\log \frac{e^{1+\Delta}(N-1)}{2}+\frac{1}{4}\bigg]\,\Delta \nonumber\\
&&\hspace{.cm}+\bigg(\widetilde{\Delta}+4\log\log \dfrac{(N-1)}{2}\bigg)\,\widetilde{R}_{y}(N,x)   \nonumber \\
&&\hspace{-.6cm}=\mathcal{O}\big\{\log^{4}(N-1)\big[\log\log(N-1)\big]\big\} :=J_{2.2}^{y,\ast}(N,x)\,.
\label{J_2 2.2.2 eq2 final y}
\end{eqnarray}
Substituting Eqs.(\ref{J_2 2.1 1 y final}) and (\ref{J_2 2.2.2 eq2 final y}) into (\ref{J_2 2 y}), we obtain
\begin{eqnarray}
J_{2}^{y}(N,x)&\leq & J_{2.1}^{y}(N,x)\cdot J_{2.2}^{y}(N,x)
\nonumber\\
&=& \dfrac{(3e^{\gamma})^{\varepsilon}}{m_{f}^{\varepsilon}(x,R_{2})}\,\dfrac{\mathcal{O}\big\{\log^{4}(N-1)\big[\log\log(N-1)\big]\big\} }{(N-1)^{\varepsilon/2}}\,,\qquad\text{}\, \varepsilon\in(0,e]\nonumber\\
&:=&
J_{2}^{y,\ast}(N,x) \,.
\label{J_2 y infty}
\end{eqnarray}
Note that logarithm functions grow slower than power functions, and this implies that
\begin{eqnarray}\label{R_j_2 y}
J_{2}^{y,\ast}(N,x)=\dfrac{\mathcal{O}\big([\log^{4}(N-1)]\log\log (N-1)\big)}{\mathcal{O}((N-1)^{\varepsilon/2})}\longrightarrow 0\,,\quad N\rightarrow\infty\,.  
\end{eqnarray}
Thus, we have
\begin{eqnarray}\label{J_2 2 final+1 y}
J_{2}^{y}(N,x)\leq J_{2}^{y,\ast}(N,x) <\infty\,.
\end{eqnarray}
From Eqs.(\ref{I1 x X,y}), (\ref{I1 y N,x bounded}), (\ref{I_2(N,x) 1}), (\ref{J_1 y part2 eq3}) and (\ref{J_2 2 y}), we have
\begin{eqnarray}
\hspace{-.3cm}\frac{\mathsf{E}G\big(\log^{4}\eta_{N,x}^{y}\big)}{4}\, &=&I_{1}^{y}(N,x)+I_{2}^{y}(N,x)  \label{EG log4 y eq1}\nonumber\\
\hspace{-1.cm}&:=&\vartheta_{1}(N)\,M_{f}^{\varepsilon_{1}}(x,R_{1})+\vartheta_{2}(N)\,m_{f}^{-\varepsilon}(x,R_{2})+J_{2}^{y,\ast}(N,x) \,,\label{EG log4 y eq2}
\end{eqnarray}
where
\begin{eqnarray}
&&\vartheta_{1}(N):=e^{-\gamma\varepsilon_{1}}
\textstyle L(\varepsilon_{1})
<\infty\,,\quad \varepsilon_{1}\in(0,1]\,, \label{vartheta 1 N y eq1}\\
&&\vartheta_{2}(N):=(2e^{\gamma})^{\varepsilon}
\textstyle L(\varepsilon)<\infty\,,\quad \varepsilon \in(0,e]\,.
\end{eqnarray} \label{vartheta 2 N y eq1}
and
\begin{eqnarray}\label{J_2 y ast}
&&\hspace{-.9cm}  J_{2}^{y,\ast}(N,x)= J_{2.1}^{y,\ast}(N,x)\cdot J_{2.2}^{y,\ast}(N,x) \nonumber\\
&&\hspace{-.6cm} = {(3e^{\gamma})^{\varepsilon}}{m_{f}^{-\varepsilon}(x,R_{2})}\,(N-1)^{-\varepsilon/2}\cdot \Big[J_{2.2.1}^{y,\ast}+\Big(J_{2.2.2}^{y,(1),\ast}+ J_{2.2.2}^{y,(2),\ast}\Big)\Big]
\nonumber\\
&&\hspace{-.6cm} ={m_{f}^{-\varepsilon}(x,R_{2})}\,{(3e^{\gamma})^{\varepsilon}}\, \Bigg[\dfrac{J_{2.2.1}^{y,\ast}+J_{2.2.2}^{y,(1),\ast}}{(N-1)^{\varepsilon/2}}+\dfrac{ J_{2.2.2}^{y,(2),\ast}}{(N-1)^{\varepsilon/2}}\Bigg] \nonumber\\
&&\hspace{-.6cm}:=m_{f}^{-\varepsilon}(x,R_{2})\,\Big(\vartheta_{3}(N)+\vartheta_{4}(N)\,\widetilde{R}_{y}(N,x)  \Big) \,,
\end{eqnarray}
where
\begin{eqnarray}
&&\hspace{-1.2cm}\vartheta_{3}(N)=(3e^{\gamma})^{\varepsilon}\cdot\dfrac{J_{2.2.1}^{y,\ast}+J_{2.2.2}^{y,(1),\ast}}{(N-1)^{\varepsilon/2}} \,,\quad \varepsilon\in(0,e] \nonumber\\
&&=(3\tilde{\gamma})^{\varepsilon}\cdot \dfrac{\mathcal{O}\big\{\log^{4}(N-1)\big[\log\log(N-1)\big]\big\}}{(N-1)^{\varepsilon/2}} \,,\label{vartheta 3 N y eq1} \\
&& \xrightarrow[\text{}]{\text{$N\rightarrow \infty$}} 0 \,. \label{vartheta 3 N y eq2}
\end{eqnarray}
and 
\begin{eqnarray}
&&\hspace{-1.2cm}\vartheta_{4}(N)=(3e^{\gamma})^{\varepsilon}\dfrac{ J_{2.2.2}^{y,(2),\ast}/\widetilde{R}_{y}(N,x)  }{(N-1)^{\varepsilon/2}}\,,\quad \varepsilon\in(0,e] \nonumber\\
&&=(3e^{\gamma})^{\varepsilon}\dfrac{\mathcal{O}\big(\log\log(N-1)\big)}{(N-1)^{\varepsilon/2}}\,,  \label{vartheta 4 N y eq1}\\
&& \xrightarrow[\text{}]{\text{$N\rightarrow \infty$}} 0 \,. \label{vartheta 4 N y eq1}
\end{eqnarray}
Indeed, both $\vartheta_{3}(N) \xrightarrow[\text{}]{\text{$N\rightarrow \infty$}} 0 $ and $\vartheta_{4}(N) \xrightarrow[\text{}]{\text{$N\rightarrow \infty$}} 0 $ hold since the denominator of Eqs.(\ref{vartheta 3 N y eq1}) and (\ref{vartheta 4 N y eq1})  grows much faster than the numerator (i.e. the power function grows much faster than the logarithm functions) when $ N $ increases.\\
Substituting Eqs.(\ref{vartheta 1 N y eq1})-(\ref{vartheta 4 N y eq1}) into Eq.(\ref{EG log4 y eq2}), we obtain
\begin{eqnarray}
\frac{\mathsf{E}\Big\{G\Big[(\log\eta_{N,x}^{y}\big)^{4}\Big]\Big\}}{4} \hspace{-.6cm} &&\leq \vartheta_{1}(N)\,M_{f}^{\varepsilon_{1}}(x,R_{1})+ \Big(\vartheta_{2}(N)+\vartheta_{3}(N)+\vartheta_{4}(N)\,\widetilde{R}_{y}(N,x) \Big)\,m_{f}^{-\varepsilon}(x,R_{2})\nonumber \\
&& < \infty\,. \label{EG log4 y eq3}
\end{eqnarray}
Thanks to the symmetry structure, we can obtain the following result analogously.
\begin{eqnarray}
\frac{\mathsf{E}\Big\{G\Big[(\log\eta_{N,y}^{x}\big)^{4}\Big]\Big\}}{4} \hspace{-.6cm} &&\leq \vartheta_{1}(N)\,M_{f}^{\varepsilon_{1}}(y,R_{1})+ \Big(\vartheta_{2}(N)+\vartheta_{3}(N)+\vartheta_{4}(N)\,\widetilde{R}_{x}(N,y) \Big)\,m_{f}^{-\varepsilon}(y,R_{2})\nonumber \\
&& < \infty \,.\label{EG log4 x eq3}
\end{eqnarray}
Substituting Eqs.(\ref{EG log4 y eq3}) and (\ref{EG log4 x eq3}) into Eq.(\ref{E eta_x^y,2 eta_y^x,2}), we obtain
\begin{eqnarray}\label{E eta_x^y,2 eta_y^x,2 eq2}
&&\mathsf{E} G \Big[\Big| \big(\log\eta^{y}_{N,x}\big)^{2}\big(\log\eta^{x}_{N,y}  \big)^{2}\Big|\Big]  \nonumber\\
 &&\hspace{-.6cm} \leq \dfrac{\mathsf{E} G \big(\log\eta^{y}_{N,x}\big)^{4}+\mathsf{E} G \big(\log\eta^{x}_{N,y}  \big)^{4}}{2}  \nonumber\\
 &&\hspace{-.6cm}\leq 2\,\vartheta_{1}(N)\,\Big(M_{f}^{\varepsilon_{1}}(x,R_{1})+M_{f}^{\varepsilon_{1}}(y,R_{1})\Big)+ 2\vartheta_{2}(N)\Big(m_{f}^{-\varepsilon}(x,R_{2})+m_{f}^{-\varepsilon}(y,R_{2})\Big)+ \nonumber \\
 && 2\vartheta_{3}(N)\Big(m_{f}^{-\varepsilon}(x,R_{2})+m_{f}^{-\varepsilon}(y,R_{2})\Big)+2\vartheta_{4}(N)\Big(m_{f}^{-\varepsilon}(x,R_{2})\widetilde{R}_{y}(N,x) +m_{f}^{-\varepsilon}(y,R_{2})\widetilde{R}_{x}(N,y) \Big)
 \nonumber\\
 &:=& C_{0}(x,y) + C_{1}(x,y)  \nonumber\\
 &<& \infty\,,
\end{eqnarray}
where 
\begin{eqnarray}
&&\hspace{-1.cm} C_{0}(x,y)=2\,\vartheta_{1}(N)\,\Big(M_{f}^{\varepsilon_{1}}(y,R_{1})+M_{f}^{\varepsilon_{1}}(y,R_{1})\Big)+ 2\vartheta_{2}(N)\Big(m_{f}^{-\varepsilon}(x,R_{2})+m_{f}^{-\varepsilon}(y,R_{2})\Big) \nonumber\\
&&\hspace{.6cm} <\infty
\end{eqnarray}
and
\begin{eqnarray}
&& C_{1}(x,y)=2\vartheta_{3}(N)\Big(m_{f}^{-\varepsilon}(x,R_{2})+m_{f}^{-\varepsilon}(y,R_{2})\Big)+ 2\vartheta_{4}(N)\cdot \nonumber\\
&&\hspace{2.cm} \Big( m_{f}^{-\varepsilon}(x,R_{2})\widetilde{R}_{y}(N,x) +m_{f}^{-\varepsilon}(y,R_{2})\widetilde{R}_{x}(N,y) \Big) \nonumber \\
&&\hspace{1.6cm}\xrightarrow[\text{}]{\text{$N\rightarrow \infty$}} 0\,. 
\end{eqnarray}
$ C_{1}(x,y) >0$ is bounded and decreases when $ N $ increases. Hence , $ \exists $ a positive numbers $ \kappa \in(0,\infty)$ and $ \widetilde{N}_{3}:=\widetilde{N}_{3}(x,y)\in (0,\infty)$ that
\begin{eqnarray} \label{C_1 y bounded}
C_{1}(x,y)\leq \kappa\,, \qquad \text{when}\,\, N\geq \widetilde{N}_{3}\,.
\end{eqnarray}
In conclusion, we define $ \widetilde{C}_{0}=C_{0}(x,y) + \kappa  $ and there exists a positive  $ \widetilde{N} $, that
\begin{eqnarray}\label{uniform integrability joint Pt1}
\sup_{N\geq \tilde{N}} \mathsf{E} G \Big[\Big| \big(\log\eta^{y}_{N,x}\big)^{2}\big(\log\eta^{x}_{N,y}  \big)^{2}\Big|\Big]\leq \tilde{C}_{0}(x,y)<\infty
\end{eqnarray}
with
\begin{eqnarray}
\widetilde{N}:=\widetilde{N}(x,y)=\max\{\widetilde{N}_{0}, \widetilde{N}_{1}, \widetilde{N}_{2},\tilde{N}_{3}\}\,.
\end{eqnarray}
We completed the proof of uniform of integrability of $ \Big\{\big(\log\eta^{y}_{N,x}\big)^{2}\big(\log\eta^{x}_{N,y}  \big)^{2}\Big\} $ for $ (x,y)\in \mathcal{A}_{1,M}:=\{(x,y)\in \mathcal{A}_{1}: \rho(x,y)>M\}$.
\begin{eqnarray}
\mathsf{E} \Big\{\big(\log\eta^{y}_{N,x}\big)^{2}\big(\log\eta^{x}_{N,y}  \big)^{2}\Big|\mathds{1}\big\{\rho(x, y)>M\big\}\Big\}\xrightarrow[\text{}]{\text{$N\rightarrow \infty$}} \big(\log^{2}f(x)+\tfrac{\pi^{2}}{6}\big)\big(\log^{2}f(y)+\tfrac{\pi^{2}}{6}\big).\quad 
\end{eqnarray}
\subsubsection{T1: $ (x,y)\in \mathcal{A}_{1,M}:=\{(x,y)\in \mathcal{A}_{1}: \rho(x,y)>M,\,\,M>0\}$}
Let us define
\begin{eqnarray} \label{T_N,x,y def}
T_{N}(x,y)&=&\mathsf{E}\Big({\zeta_{i}(N)}^{2}{\zeta_{j}(N)}^{2} \Big| X_{i}=x, X_{j}=y \Big)
\end{eqnarray}
and combing the Eqs.(\ref{expectation joint N}) and (\ref{expectation joint N limit}), we have
\begin{eqnarray} \label{zeta_i zeta_j}
\mathsf{E}\Big({\zeta_{i}(N)}^{2}{\zeta_{j}(N)}^{2} \Big| X_{i}=x, X_{j}=y \Big)&=&\mathsf{E}\Big({\zeta_{1}(N)}^{2}{\zeta_{2}(N)}^{2} \Big| X_{1}=x, X_{2}=y \Big) \nonumber\\
&=&\mathsf{E}\Big[\big(\log\eta^{y}_{N,x}\big)^{2}\big(\log\eta^{x}_{N,y}  \big)^{2}\Big] \nonumber\\
&=&\mathsf{E}\Big[\big(\log\eta_{x}\big)^{2}\big(\log\eta_{y}  \big)^{2}\Big]\,,\quad\quad N\rightarrow\infty \nonumber\\
&=&\mathsf{E}\Big[\big(\log\eta_{x}\big)^{2}\big(\log\eta_{y}  \big)^{2}\Big] \nonumber\\
&=&\big(\log^{2}f(x)+\tfrac{\pi^{2}}{6}\big)\big(\log^{2}f(y)+\tfrac{\pi^{2}}{6}\big)\,.
\end{eqnarray}
From Eqs.(\ref{zeta_i zeta_j}) and (\ref{T_N,x,y def}), we have
\begin{eqnarray}\label{T_N,x,y def expectation}
\mathsf{E}\,T_{N}(x,y)&= &\mathsf{E}\Big\{\mathsf{E}\Big({\zeta_{i}(N)}^{2}{\zeta_{j}(N)}^{2} \Big| X_{i}=x, X_{j}=y \Big) \Big\}  \nonumber\\
&= &\mathsf{E}\Big({\zeta_{i}(N)}^{2}{\zeta_{j}(N)}^{2} \Big)\,.
\end{eqnarray}
We have to prove the uniform of integrability of $  T_{N}(x,y)$ for $ (x,y)\in \mathcal{A}_{1,M}:=\{(x,y)\in \mathcal{A}_{1}: \rho(x,y)>M\}$. It is worth to derive the following result.
\begin{align} 
G\Big(\Big|T_{N}(x,y)\Big|\mathds{1}\big\{\rho(x, y)>M\big\}\Big) &\leq  G\Big(\Big|T_{N}(x,y)\Big|\Big) \nonumber\\
& = G\Big(\Big|\mathsf{E}\big({\zeta_{i}(N)}^{2}{\zeta_{j}(N)}^{2} \Big| X_{i}=x, X_{j}=y \big)\big|\Big) \nonumber\\
&= G\Big(\Big|\mathsf{E}\Big[\big(\log\eta^{y}_{N,x}\big)^{2}\big(\log\eta^{x}_{N,y}  \big)^{2}\Big]\Big|\Big) \nonumber\\
&\leq G\Big(\mathsf{E}\Big|\big(\log\eta^{y}_{N,x}\big)^{2}\big(\log\eta^{x}_{N,y}  \big)^{2}\Big|\Big) \nonumber\\
&\hspace{-1cm}\underbrace{\leq}_{\text{Jensen's inequality}} \mathsf{E}\Big[G\Big(\Big|\big(\log\eta^{y}_{N,x}\big)^{2}\big(\log\eta^{x}_{N,y}  \big)^{2}\Big|\Big)\Big] \label{GT_N,x,y eq1}  \\
&\leq  \tilde{C}_{0}(x,y)\,,\quad \text{defined in Eq.(\ref{uniform integrability joint Pt1})} \nonumber\\
&< \infty\,.
\end{align}
Let us calculate the expectation $ \mathsf{E}\,G\big(\Big|T_{N}(x,y)\Big|\mathds{1}\big\{\rho(x, y)>M\big\}\big) $ by substituting the Eqs.(\ref{T_N,x,y def}),(\ref{GT_N,x,y eq1}) and (\ref{E eta_x^y,2 eta_y^x,2 eq2}) as follows.
\begin{eqnarray}
&&\hspace{.0cm}\mathsf{E}\,G\Big(\Big|T_{N}(x,y)\Big|\mathds{1}\big\{\rho(x, y)>M\big\}\Big) \nonumber\\
&&\hspace{-0.6cm}\leq \mathsf{E}\,G\Big(\Big|T_{N}(x,y)\Big|\Big) \nonumber\\
&&\hspace{-.6cm}=\int_{\mathbb{R}^{d}}\int_{\mathbb{R}^{d}} \,G\Big(\Big|T_{N}(x,y)\Big|\Big) f(x)f(y) dx\,dy \nonumber\\
&&\hspace{-.6cm}\leq \int_{\mathbb{R}^{d}}\int_{\mathbb{R}^{d}} \mathsf{E}\Big[G\Big(\Big|\big(\log\eta^{y}_{N,x}\big)^{2}\big(\log\eta^{x}_{N,y}  \big)^{2}\Big|\Big)\Big] f(x)f(y) dx\,dy \nonumber\\
&&\hspace{-.6cm}\leq 2\,\vartheta_{1}(N)\,\Big(\int_{\mathbb{R}^{d}}M_{f}^{\varepsilon_{1}}(x,R_{1})f(x)dx+\int_{\mathbb{R}^{d}}M_{f}^{\varepsilon_{1}}(y,R_{1})f(y)dy\Big)+ 2\vartheta_{2}(N)\Big(\int_{\mathbb{R}^{d}}   \nonumber\\
&& m_{f}^{-\varepsilon}(x,R_{2})f(x)dx+\int_{\mathbb{R}^{d}}m_{f}^{-\varepsilon}(y,R_{2})f(y)dy\Big)+ 2\vartheta_{3}(N)\Big(\int_{\mathbb{R}^{d}}m_{f}^{-\varepsilon}(x,R_{2})f(x)dx    \nonumber \\
&& +\int_{\mathbb{R}^{d}}m_{f}^{-\varepsilon}(y,R_{2})f(y)dy\Big)+2\vartheta_{4}(N)\Big(\int_{\mathbb{R}^{d}}m_{f}^{-\varepsilon}(x,R_{2})\mathbb{G}^{y,\ast}(N,x)f(x)dx+      \nonumber\\
&&\int_{\mathbb{R}^{d}}m_{f}^{-\varepsilon}(y,R_{2})\mathbb{G}^{x,\ast}(N,y)f(y)dy\Big)    \nonumber\\
&&\hspace{-.6cm}=4\,\vartheta_{1}(N)\,\int_{\mathbb{R}^{d}}M_{f}^{\varepsilon_{1}}(x,R_{1})f(x)dx+ 4\vartheta_{2}(N)\int_{\mathbb{R}^{d}} m_{f}^{-\varepsilon}(x,R_{2})f(x)dx+  \nonumber\\
&& 4\vartheta_{3}(N)\int_{\mathbb{R}^{d}}m_{f}^{-\varepsilon}(x,R_{2})f(x)dx +4\vartheta_{4}(N)\,\int_{\mathbb{R}^{d}}m_{f}^{-\varepsilon}(x,R_{2})\,\mathbb{G}^{y,\ast}(N,x)f(x)dx \nonumber\\
&&\hspace{-.6cm}:= \vartheta_{1}(N)\, Q_{f}(\varepsilon_{1}, R_{1})+\vartheta_{2}(N)\,T_{f}(\varepsilon , R_{2})+\kappa_{f}(N,\varepsilon ,R_{2}) \,,\label{ET_N,x,y rho>M eq1}
\end{eqnarray}
where
\begin{eqnarray}
&& Q_{f}(\varepsilon_{1}, R_{1})=4\,\int_{\mathbb{R}^{d}}M_{f}^{\varepsilon_{1}}(x,R_{1})f(x)dx <\infty \,,\label{Qf eq1} \\
&& T_{f}(\varepsilon , R_{2})=4 \int_{\mathbb{R}^{d}} m_{f}^{-\varepsilon}(x,R_{2})f(x)dx <\infty \,,\label{Tf eq1}
\end{eqnarray}
and
\begin{align}
\hspace{-.4cm}\kappa_{f}(N,\varepsilon ,R_{2})& =4\underbrace{\vartheta_{3}(N)}_{\xrightarrow[\text{}]{\text{$N\rightarrow \infty$}} 0 }\int_{\mathbb{R}^{d}}m_{f}^{-\varepsilon}(x,R_{2})f(x)dx +4\underbrace{\vartheta_{4}(N)}_{\xrightarrow[\text{}]{N\rightarrow \infty} 0 }\,\int_{\mathbb{R}^{d}}m_{f}^{-\varepsilon}(x,R_{2})\,\mathbb{G}^{y,\ast}(N,x)f(x)dx \nonumber\\
&\xrightarrow[\text{}]{\text{$N\rightarrow \infty$}} 0 \,.\label{kappa_f N}
\end{align}
$ \kappa_{f}(N,\varepsilon ,R_{2}) >0$ is bounded and decreases when $ N $ increases. Hence , $ \exists $ a positive numbers $ \tilde{\kappa} \in(0,\infty)$ and $ N_{4}=N_{4}(\tilde{\kappa})\in (0,\infty)$ that
\begin{eqnarray} \label{kappa_f N tilde}
\kappa_{f}(N,\varepsilon ,R_{2}) \leq \tilde{\kappa}\,, \qquad \text{when}\,\, N\geq {N}_{4}\,.
\end{eqnarray}
In conclusion, we define $ \tilde{\tilde{C}}_{0}=Q_{f}(\varepsilon_{1}, R_{1}) + T_{f}(\varepsilon , R_{2})+ \tilde{\kappa}  $ and there exists a positive  $ \tilde{\tilde{N}}=\max\{N_{4}(\tilde{\kappa}), \tilde{N}\} $, that
\begin{eqnarray}\label{uniform integrability joint Pt2 T}
\sup_{N\geq \tilde{\tilde{N}}} \mathsf{E} G\Big(\Big|T_{N}(x,y)\Big|\mathds{1}\big\{\rho(x, y)>M\big\}\Big)\leq \tilde{\tilde{C}}_{0}(x,y)<\infty\,.
\end{eqnarray}
We completed the proof of uniform of integrability of $  T_{N}(x,y)$ for $(x,y)\in \mathcal{A}_{1,M}:=\{(x,y)\in \mathcal{A}_{1}: \rho(x,y)>M\}$.
Thus,
\begin{eqnarray} \label{ET_N,x,y rho>M eq2}
\mathsf{E} \Big[\Big|T_{N}(x,y)\Big|\mathds{1}\big\{\rho(x, y)>M\big\}\Big]\xrightarrow[\text{}]{\text{$N\rightarrow \infty$}} \mathsf{E} \Big[\big(\log^{2}f(x)+\tfrac{\pi^{2}}{6}\big)\big(\log^{2}f(y)+\tfrac{\pi^{2}}{6}\big)\Big]\,.
\end{eqnarray}
\subsubsection{T2: $(x,y)\in \mathcal{A}_{2,M}:=\{(x,y)\in \mathcal{A}_{2}: \rho(x,y)\leq M,\,\,M>0\}$}
Let us considering when $  \rho(x,y)\leq M$ or the domain $(x,y)\in \mathcal{A}_{2,M}:=\{(x,y)\in \mathcal{A}_{2}: \rho(x,y)\leq M\}$ with $ M> 0 $. We are interested in the case when $ M $ is sufficiently small and without loss of generality, we set $ M\ll 1 $. Thus, this implies
$ -\infty <\log \rho(x,y) \leq \log M <0 $ and thus, $ |\log \rho(x,y)|\geq |\log M | $. Moreover, we have the following result in probabilities.
\begin{eqnarray}
\mathsf{Pr} \Big(\rho(x,y)\leq M \Big)&=&\mathsf{Pr}\Big\{G\big(\big|\log \rho(x,y)\big|\big)\geq G\big(\big|\log M \big|\big)\Big\}\,,\quad M\in(0,1) \nonumber\\
&\leq & \dfrac{\mathsf{E}G\big(\big|\log \rho(x,y)\big|\big)}{G\big(\big|\log M \big|\big)}\,,\quad \text{based on Markov's inequality } \nonumber\\
& \xrightarrow[\text{}]{\text{$M\rightarrow 0$}} & 0\,, \label{Prob rho<=M}
\end{eqnarray}
where the numerator is finite, i.e.
\begin{eqnarray}
\mathsf{E}G\big(\big|\log \rho(x,y)\big|\big)=\int_{\mathbb{R}^{d}}\int_{\mathbb{R}^{d}} \,G\Big(\Big|\log \rho(x,y)\Big|\Big) f(x)f(y) dx\,dy <\infty\,.
\end{eqnarray}
The ``$ <\infty $'' is guaranteed by $ K_{f,\alpha}(\varepsilon_{0})<\infty  $ in Eq.(\ref{condition K}), where we set $ \alpha =1 $ and $ \varepsilon_{0}=0 $. The denominator tends to infinity when $ M \rightarrow 0$, i.e.
\begin{eqnarray}
G\big(\big|\log M \big|\big) \xrightarrow[\text{}]{\text{$M\rightarrow 0$}} \infty\,.
\end{eqnarray}
For $ z\in [1,t] $ and $ t\geq 1 $, we have
\begin{eqnarray}
\dfrac{t-1}{t}=\int_{1}^{t}\dfrac{1}{t} dz \leq \int_{1}^{t}\dfrac{1}{z} dz = \log t \leq \int_{1}^{t} 1\,dz= t-1\,.
\end{eqnarray}
Therefore, we have
\begin{eqnarray}
t-1 \leq =t \log t  \,\Longrightarrow\, t\leq G(t)+1\,,\quad t\geq 1\,.
\end{eqnarray}
Obviously, when $ t\in [0,1) $, we have $ G(t)=0\,\,\Rightarrow\,\,t\leq G(t)+1 $. Therefore, we have for all $ t\geq 0 $
\begin{eqnarray}\label{t<G(t)+1 eq1}
t\leq G(t)+1\,,\qquad t\geq 0\,.
\end{eqnarray}
Multiplying $ \mathds{1}\big\{\rho(x, y)\leq M\big\}\geq 0 $ to both RHS and LHS of Eq.(\ref{t<G(t)+1 eq1}), we obtain
\begin{eqnarray}\label{t<G(t)+1 eq2}
t\,\mathds{1}\big\{\rho(x, y)\leq M\big\}\leq G(t)\,\mathds{1}\big\{\rho(x, y)\leq M\big\}+\mathds{1}\big\{\rho(x, y)\leq M\big\}\,,\qquad t\geq 0\,.
\end{eqnarray}
For $ \mathds{1}\big\{\rho(x, y)\leq M\big\}=1 $, we can conclude
\begin{eqnarray}\label{t<G(t)+1 eq3}
G(t)\,\mathds{1}\big\{\rho(x, y)\leq M\big\}&=&(t\log t)\,\mathds{1}\big\{\rho(x, y)\leq M\big\} \nonumber\\
&=&(t\log t)\,\mathds{1}\big\{\rho(x, y)\leq M\big\} \nonumber\\
&=& (t\,\mathds{1}\big\{\rho(x, y)\leq M\big\}) \log t \nonumber \\
&=& (t\,\mathds{1}\big\{\rho(x, y)\leq M\big\}) \log (t\,\mathds{1}\big\{\rho(x, y)\leq M\big\}) \nonumber\\
&=& G\Big(t\,\mathds{1}\big\{\rho(x, y)\leq M\big\}\Big)\,.
\end{eqnarray}
For $ \mathds{1}\big\{\rho(x, y)\leq M\big\}=0 $, we have
\begin{eqnarray}\label{t<G(t)+1 eq4}
G(t)\,\mathds{1}\big\{\rho(x, y)\leq M\big\}=0=G\Big(t\,\mathds{1}\big\{\rho(x, y)\leq M\big\}\Big)\,.
\end{eqnarray}
Substituting Eqs.(\ref{t<G(t)+1 eq3}) and (\ref{t<G(t)+1 eq4}) into Eq.((\ref{t<G(t)+1 eq2})), we obtain 
\begin{eqnarray}
t\,\mathds{1}\big\{\rho(x, y)\leq M\big\}\leq G\Big(t\,\mathds{1}\big\{\rho(x, y)\leq M\big\}\Big)+\mathds{1}\big\{\rho(x, y)\leq M\big\}\,,\qquad t\geq 0\,.
\end{eqnarray}
\begin{eqnarray}
&&\hspace{-.8cm}\Big|\mathsf{E} \Big[T_{N}(x,y)\mathds{1}\big\{\rho(x, y)\leq M\big\}\Big]\Big| \nonumber\\
&&\hspace{-1.2cm}=\Big|\int_{\mathbb{R}^{d}}\int_{\mathbb{R}^{d}} \Big[T_{N}(x,y)\mathds{1}\big\{\rho(x, y)\leq M\big\}\Big]f(x)f(y) dx\,dy \Big|\nonumber\\
&&\hspace{-1.2cm}=\Big|\int_{\mathbb{R}^{d}}\int_{\mathbb{R}^{d}} \mathsf{E}\Big[{\zeta_{1}(N)}^{2}{\zeta_{2}(N)}^{2} \,\mathds{1}\big\{\rho(x, y)\leq M\big\}\Big| X_{1}=x, X_{2}=y \Big]f(x)f(y) dx\,dy \Big|\nonumber\\
&&\hspace{-1.2cm}=\Big|\int_{\mathbb{R}^{d}}\int_{\mathbb{R}^{d}} \mathsf{E}\Big[\big(\log\eta^{y}_{N,x}\big)^{2}\big(\log\eta^{x}_{N,y}  \big)^{2}\,\mathds{1}\big\{\rho(x, y)\leq M\big\}\Big] f(x)f(y) dx\,dy \Big|\nonumber\\
&&\hspace{-1.2cm}\leq \int_{\mathbb{R}^{d}}\int_{\mathbb{R}^{d}} \mathsf{E}\Big[\Big|\big(\log\eta^{y}_{N,x}\big)^{2}\big(\log\eta^{x}_{N,y}  \big)^{2}\Big|\,\mathds{1}\big\{\rho(x, y)\leq M\big\}\Big] f(x)f(y) dx\,dy \nonumber\\
&&\hspace{-1.2cm}\leq \int_{\mathbb{R}^{d}}\int_{\mathbb{R}^{d}} G\Big\{\mathsf{E}\Big[\Big|\big(\log\eta^{y}_{N,x}\big)^{2}\big(\log\eta^{x}_{N,y}  \big)^{2}\Big|\,\mathds{1}\big\{\rho(x, y)\leq M\big\}\Big]\Big\} f(x)f(y) dx\,dy +\nonumber\\
&&\hspace{-.86cm}\int_{\mathbb{R}^{d}}\int_{\mathbb{R}^{d}} \,\mathds{1}\big\{\rho(x, y)\leq M\big\} f(x)f(y) dx\,dy \nonumber\\
&&\hspace{-1.2cm}=\int_{\big\{\rho(x, y)\leq M\big\}} G\Big\{\mathsf{E}\Big[\Big|\big(\log\eta^{y}_{N,x}\big)^{2}\big(\log\eta^{x}_{N,y}  \big)^{2}\Big]\Big\} f(x)f(y) dx\,dy +\mathsf{Pr} \Big(\rho(x,y)\leq M \Big)   \nonumber\\
&&\hspace{-1.2cm}\leq \int_{\big\{\rho(x, y)\leq M\big\}} \mathsf{E}\Big\{G\Big[\Big|\big(\log\eta^{y}_{N,x}\big)^{2}\big(\log\eta^{x}_{N,y}  \big)^{2}\Big] \Big\} f(x)f(y) dx\,dy+\mathsf{Pr} \Big(\rho(x,y)\leq M \Big) . \quad \label{ET_N,x,y rho<=M eq1}
\end{eqnarray}
The inequality in Eq.(\ref{ET_N,x,y rho<=M eq1}) is guaranteed by the Jensen's inequality. Analogous to results in Eqs.(\ref{ET_N,x,y rho>M eq1})-(\ref{Tf eq1}), we derive the 1st part of Eq.(\ref{ET_N,x,y rho<=M eq1}) as follows.
\begin{eqnarray}
&&\int_{\big\{\rho(x, y)\leq M\big\}} \mathsf{E}\Big\{G\Big[\Big|\big(\log\eta^{y}_{N,x}\big)^{2}\big(\log\eta^{x}_{N,y}  \big)^{2}\Big] \Big\} f(x)f(y) dx\,dy\,,\quad N\geq \tilde{\tilde{N}} \nonumber\\
&&\hspace{-.6cm}\leq 4\,\vartheta_{1}(N)\,\int_{\big\{\rho(x, y)\leq M\big\}}M_{f}^{\varepsilon_{1}}(x,R_{1})f(x)dx+ 4\vartheta_{2}(N)\int_{\big\{\rho(x, y)\leq M\big\}} m_{f}^{-\varepsilon}(x,R_{2})f(x)dx+  \nonumber\\
&& 4\vartheta_{3}(N)\int_{\big\{\rho(x, y)\leq M\big\}}m_{f}^{-\varepsilon}(x,R_{2})f(x)dx +4\vartheta_{4}(N)\,\int_{\big\{\rho(x, y)\leq M\big\}}m_{f}^{-\varepsilon}(x,R_{2})\,\widetilde{R}_{y}(N,x) f(x)dx \nonumber\\
&&\hspace{-.6cm}= 4\,\vartheta_{1}(N)\,\mathsf{E}\Big(M_{f}^{\varepsilon_{1}}(x,R_{1}) \,\mathds{1}\big\{\rho(x, y)\leq M\big\} \Big) + 4\vartheta_{2}(N)\mathsf{E}\Big(m_{f}^{-\varepsilon}(x,R_{2}) \,\mathds{1}\big\{\rho(x, y)\leq M\big\} \Big) +  \nonumber\\
&& 4\vartheta_{3}(N)\mathsf{E}\Big(m_{f}^{-\varepsilon}(x,R_{2}) \,\mathds{1}\big\{\rho(x, y)\leq M\big\} \Big) +4\vartheta_{4}(N)\,\mathsf{E}\Big(m_{f}^{-\varepsilon}(x,R_{2})\widetilde{R}_{y}(N,x)  \,\mathds{1}\big\{\rho(x, y)\leq M\big\} \Big) \nonumber\\
&&\hspace{-.6cm}:= \vartheta_{1}(N)\, \tilde{Q}_{f}(M)+\vartheta_{2}(N)\,\tilde{T}_{f}(M)+\vartheta_{3}(N)\,\tilde{T}_{f}(M)+\vartheta_{4}(N)\tilde{D}_{f}(M)  \label{EG_N,x,y rho<=M} \nonumber\\
&& \xrightarrow[\text{}]{\text{$M\rightarrow 0$}} 0\,,
\end{eqnarray} 
where
\begin{eqnarray}
&&\hspace{-1.3cm}\tilde{Q}_{f}(M)=4\, \mathsf{E}\Big(M_{f}^{\varepsilon_{1}}(x,R_{1})\,\mathds{1}\big\{\rho(x, y)\leq M\big\} \Big)    \nonumber\\
&&= 4\,\mathsf{E}\Big(M_{f}^{\varepsilon_{1}}(x,R_{1})\Big)  \,\mathds{1}\big\{\rho(x, y)\leq M\big\}  \nonumber\\
&& =4\,\underbrace{{Q}_{f}(\varepsilon_{1}, R_{1})}_{<\infty}\,\underbrace{\mathds{1}\big\{\rho(x, y)\leq M\big\} }_{\xrightarrow[\text{}]{\text{$M\rightarrow 0$}}0} \quad \xrightarrow[\text{}]{\text{$M\rightarrow 0$}} 0\,, \label{Qf eq2} 
\end{eqnarray}
and
\begin{eqnarray}
&&\hspace{-1.3cm}\tilde{T}_{f}(M)=4\, \mathsf{E}\Big(m_{f}^{-\varepsilon}(x,R_{2})\,\mathds{1}\big\{\rho(x, y)\leq M\big\} \Big)    \nonumber\\
&&= 4\,\mathsf{E}\Big(m_{f}^{-\varepsilon}(x,R_{2})\Big)  \,\mathds{1}\big\{\rho(x, y)\leq M\big\}  \nonumber\\
&& =4\,\underbrace{{T}_{f}(\varepsilon , R_{2})}_{<\infty}\,\underbrace{\mathds{1}\big\{\rho(x, y)\leq M\big\} }_{\xrightarrow[\text{}]{\text{$M\rightarrow 0$}}0} \quad \xrightarrow[\text{}]{\text{$M\rightarrow 0$}} 0\,, \label{Tf eq2} 
\end{eqnarray}
and
\begin{eqnarray}
&&\hspace{-1.3cm}\tilde{D}_{f}(M)=4\,\mathsf{E}\Big(m_{f}^{-\varepsilon}(x,R_{2})\,\widetilde{R}_{y}(N,x)  \,\mathds{1}\big\{\rho(x, y)\leq M\big\} \Big)    \nonumber\\
&&= 4\,\mathsf{E}\Big(\underbrace{m_{f}^{-\varepsilon}(x,R_{2})}_{<\infty}\,\underbrace{\widetilde{R}_{y}(N,x)  }_{<\infty}\Big) \,\underbrace{\mathds{1}\big\{\rho(x, y)\leq M\big\} }_{\xrightarrow[\text{}]{\text{$M\rightarrow 0$}}0}  \nonumber\\
&& \xrightarrow[\text{}]{\text{$M\rightarrow 0$}} 0\,. \label{Df eq1} 
\end{eqnarray}
Moreover,
Recalling from the Eq.(\ref{Prob rho<=M}) that $ \mathsf{Pr} \big(\rho(x,y)\leq M \big) \xrightarrow[\text{}]{\text{$M\rightarrow 0$}}  0 $. Substituting Eqs.(\ref{Prob rho<=M}) and (\ref{EG_N,x,y rho<=M}) into Eq.(\ref{ET_N,x,y rho<=M eq1}), we obtain
\begin{eqnarray}\label{ET_N,x,y rho<=M eq2}
\Big|\mathsf{E} \Big[T_{N}(x,y)\mathds{1}\big\{\rho(x, y)\leq M\big\}\Big]\Big|  \xrightarrow[\text{$N\rightarrow \infty$}]{\text{$M\rightarrow 0$}} 0\,.
\end{eqnarray}
Therefore, for any $ \kappa_{2}>0 $, there exits positive numbers $ M_{1}:=M_{1}(\kappa_{2}) $ and $ N_{5}:=N_{5}(\kappa_{2}) $ such that for $ M>M_{1} $ and $ N>N_{5} $, we have
\begin{eqnarray}
\Big|\mathsf{E} \Big[T_{N}(x,y)\mathds{1}\big\{\rho(x, y)\leq M\big\}\Big]\Big| =\Big|\int_{\big\{\rho(x, y)\leq M\big\}}T_{N}(x,y)f(x)f(y)dxdy \Big|< \kappa_{2}\,.
\end{eqnarray} 
Based on the \textit{Absolute continuity of the Lebesgue integral
}, we assume $ \varphi(x,y)= (\log^{2} f(x)+\sigma^{2})(\log^{2} g(x)+\sigma^{2})$ is integrable in $ \mathbb{R}^{d}\otimes\mathbb{R}^{d} $. Then for all $ \kappa_{2}>0 $, there exists a $ M_{2}=M_{2}(\kappa_{2}) $ such that, if the Lebesgue measure of $ \{\rho(x,y)\leq M\} \subset  \mathbb{R}^{d}\otimes\mathbb{R}^{d}  $ and is less than $ \delta_{M_{2}} $, i.e. $ \mu\{\rho(x,y)\leq M\}\leq \delta_{M_{2}}:=\mu\{\rho(x,y)\leq M_{2}\} $. Then the integral of $  \varphi(x,y) $ over $\{\rho(x,y)\leq M\}   $ is smaller than $ \kappa_{2} $, i.e.
\begin{eqnarray}\label{absolute cts legesgue}
\bigg|\int_{\big\{\rho(x, y)\leq M\big\}}\bigg(\log^{2} f(x)+\frac{\pi^{2}}{6}\bigg)\bigg(\log^{2} f(y)+\frac{\pi^{2}}{6}\bigg)f(x)f(y)dxdy \bigg|< \kappa_{2}\,.
\end{eqnarray}
Therefore, combing the Eqs.(\ref{ET_N,x,y rho>M eq2}),(\ref{ET_N,x,y rho<=M eq2}) and  (\ref{absolute cts legesgue}), we set $ M=\min \{M_{1},M_{2}\} $ and $ N=N_{6}(\tilde{M},\kappa_{2}) $, such that for $ N\geq \max\{ \tilde{\tilde{N}}, N_{6}, N_{6} \}$, we have
\begin{eqnarray}\label{ET_N,x,y rho>M eq3}
&&\hspace{-.6cm}\bigg|\int_{\big\{\rho(x, y)> M\big\}}T_{N}(x,y)f(x)f(y)dxdy -\int_{\big\{\rho(x, y)> M\big\}}\bigg(\log^{2} f(x)+\frac{\pi^{2}}{6}\bigg)\bigg(\log^{2} f(y)+\frac{\pi^{2}}{6}\bigg)
\nonumber\\
&&\hspace{-.6cm} f(x)f(y)dxdy \bigg|< \kappa_{2}
\end{eqnarray}
and
\begin{eqnarray}
&&\hspace{-.6cm}\bigg|\int_{\mathbb{R}^{d}}\int_{\mathbb{R}^{d}}T_{N}(x,y)f(x)f(y)dxdy -\int_{\mathbb{R}^{d}}\int_{\mathbb{R}^{d}}\bigg(\log^{2} f(x)+\frac{\pi^{2}}{6}\bigg)\bigg(\log^{2} f(y)+\frac{\pi^{2}}{6}\bigg)f(x)f(y)dxdy \bigg|  \nonumber\\
&&\hspace{-1.0cm}\leq \bigg|\int_{\big\{\rho(x, y)> M\big\}}T_{N}(x,y)f(x)f(y)dxdy -\int_{\big\{\rho(x, y)> M\big\}}\bigg(\log^{2} f(x)+\frac{\pi^{2}}{6}\bigg)\bigg(\log^{2} f(y)+\frac{\pi^{2}}{6}\bigg)
\nonumber\\
&&\hspace{-.6cm} f(x)f(y)dxdy \bigg|+\Big|\int_{\big\{\rho(x, y)\leq M\big\}}T_{N}(x,y)f(x)f(y)dxdy \Big| \nonumber\\
&&\hspace{-.6cm} +\bigg|\int_{\big\{\rho(x, y)\leq M\big\}}\bigg(\log^{2} f(x)+\frac{\pi^{2}}{6}\bigg)\bigg(\log^{2} f(y)+\frac{\pi^{2}}{6}\bigg)f(x)f(y)dxdy \bigg| \nonumber\\
&&\hspace{-1.0cm} \leq 3\kappa_{2}\,.
\end{eqnarray}
Thus, recall the Eq.(\ref{T_N,x,y def expectation}), we proved
\begin{eqnarray}
&&\mathsf{E}\Big({\zeta_{i}(N)}^{2}{\zeta_{j}(N)}^{2} \Big)
=\mathsf{E}\, T_{N}(x,y) \nonumber\\
&&\xrightarrow[\text{}]{\text{$N\rightarrow \infty$}} \int_{\mathbb{R}^{d}}\int_{\mathbb{R}^{d}}\bigg(\log^{2} f(x)+\frac{\pi^{2}}{6}\bigg)\bigg(\log^{2} f(y)+\frac{\pi^{2}}{6}\bigg)f(x)f(y)dxdy \,,\hspace{2cm} 
\end{eqnarray}
and consequently, we have
\begin{eqnarray}
&&\hspace{-0cm}\mathsf{Cov}\big[ {\zeta}_{i}^{2}(N)\,,\,{\zeta}_{j}^{2}(N)\big]=\mathsf{E}\big[ {\zeta}_{i}^{2}(N)\,{\zeta}_{j}^{2}(N)\big]-\mathsf{E}\big[ {\zeta}_{i}^{2}(N)\big]\mathsf{E}\big[ {\zeta}_{j}^{2}(N)\big] \nonumber\\
&&\xrightarrow[\text{}]{\text{$N\rightarrow \infty$}} \mathsf{E}\bigg(\log^{2} f(x)+\frac{\pi^{2}}{6}\bigg)\bigg(\log^{2} f(y)+\frac{\pi^{2}}{6}\bigg)-\mathsf{E}\bigg(\log^{2} f(x)+\frac{\pi^{2}}{6}\bigg)\bigg(\log^{2} f(y)+\frac{\pi^{2}}{6}\bigg)\nonumber\\
&&\xrightarrow[\text{}]{\text{$N\rightarrow \infty$}} 0\,,\qquad \text{as}\,\, x\perp y \Rightarrow f(x) \perp f(y) \,.
\end{eqnarray}
Summary: further to the two subsections above, we have proved the $ L^{2}-$consistence of $ S^{2}_{N}  $ in Eq.(\ref{SNx expansion}). 
\subsection{$  \mathsf{Var}(H^{2}_{N}) \,\xrightarrow[\text{}]{N\rightarrow \infty}\, 0$}
An equivalent statement is 
\begin{eqnarray}
\mathsf{Var}(H^{2}_{N}) \,\xrightarrow[\text{}]{N\rightarrow \infty}\, 0  \quad \Leftrightarrow \quad \mathsf{E}\big[(H_{N})^{4}\big] \,\xrightarrow[\text{}]{N\rightarrow \infty}\mathsf{E}^{2}(H_{N}^{2}) =[\mathsf{E}(H_{N})]^{4}\,,
\end{eqnarray}
where $ \mathsf{E}^{2}(H_{N}^{2}) =[\mathsf{E}(H_{N})]^{4} $ is given by the fact that $ \mathsf{Var}(H_{N})=0 $, $  N\rightarrow\infty$, i.e.
\begin{eqnarray}
\mathsf{Var}(H_{N})=0 \quad \Rightarrow \quad \mathsf{E}(H_{N}^{2})=[\mathsf{E}(H_{N})]^{2} \quad \Rightarrow \quad [\mathsf{E}(H_{N}^{2})]^{2}=[\mathsf{E}(H_{N})]^{4}\,.
\end{eqnarray}
Recall that $ X_{1},\ldots , X_{N} $ are i.i.d. random vectors, and this implies that, for any $ \alpha>0 $, random variables $ \zeta_{1}^{\alpha},\ldots , \zeta_{N}^{\alpha} $ are identically distributed, based on the definitions of $ \zeta_{i} $. Thus, we have $ \mathsf{E}(\zeta_{1}^{\alpha})=\cdots\mathsf{E}(\zeta_{N}^{\alpha}) $, $ \alpha=1,2,3,4 $.
\begin{eqnarray}
\mathsf{E}(H_{N}^{4})&=&\mathsf{E}\bigg[\Big(\frac{1}{N}\sum\limits_{i=1}^{N}\zeta_{i}\Big)^{4}\bigg]\nonumber\\
&=&\frac{1}{N^{4}}\mathsf{E}\bigg(\sum\limits_{i=1}^{N}\zeta_{i}^{4}+4\sum\limits_{i=1}^{N}\sum\limits_{\substack{j=1\\ j\neq i}}^{N}\zeta_{i}^{3}\zeta_{j}+3\sum\limits_{i=1}^{N}\sum\limits_{\substack{j=1\\ j\neq i}}^{N}\zeta_{i}^{2}\zeta_{j}^{2}+ 6\sum\limits_{i=1}^{N}\sum\limits_{\substack{j=1\\ j\neq i}}^{N}\sum\limits_{\substack{k=1\\ k\neq i,j}}^{N}\zeta_{i}^{2}\zeta_{j}\zeta_{k}+ \nonumber\\
&&\sum\limits_{i=1}^{N}\sum\limits_{\substack{j=1\\ j\neq i}}^{N}\sum\limits_{\substack{k=1\\ k\neq i,j}}^{N}\sum\limits_{\substack{l=1\\ k\neq i,j,k}}^{N}\zeta_{i}\zeta_{j}\zeta_{k}\zeta_{l}\bigg) \nonumber\\
&=&\frac{1}{N^{4}}\bigg\{\sum\limits_{i=1}^{N}\mathsf{E}\big(\zeta_{i}^{4}\big)+4\sum\limits_{i=1}^{N}\sum\limits_{\substack{j=1\\ j\neq i}}^{N}\mathsf{E}\big(\zeta_{i}^{3}\zeta_{j}\big)+3\sum\limits_{i=1}^{N}\sum\limits_{\substack{j=1\\ j\neq i}}^{N}\mathsf{E}\big(\zeta_{i}^{2}\zeta_{j}^{2}\big)+ \nonumber\\
&& 6\sum\limits_{i=1}^{N}\sum\limits_{\substack{j=1\\ j\neq i}}^{N}\sum\limits_{\substack{k=1\\ k\neq i,j}}^{N}\mathsf{E}\big(\zeta_{i}^{2}\zeta_{j}\zeta_{k}\big) +\sum\limits_{i=1}^{N}\sum\limits_{\substack{j=1\\ j\neq i}}^{N}\sum\limits_{\substack{k=1\\ k\neq i,j}}^{N}\sum\limits_{\substack{l=1\\ k\neq i,j,k}}^{N}\mathsf{E}\big(\zeta_{i}\zeta_{j}\zeta_{k}\zeta_{l}\big)\bigg\}\,.\label{EH_N power4 eq1}
\end{eqnarray}
We consider a generalised H\"older's inequality for the product of $ m $ random variables, where \(Z_1, Z_2, \ldots, Z_m\) are random variables, and \(q_1, q_2, \ldots, q_m\) are positive real numbers such that \(\frac{1}{q_1} + \frac{1}{q_2} + \ldots + \frac{1}{q_m} = 1\). Then, the $ m $-dimensional H\"older's inequality for \(E(Z_1Z_2 \ldots Z_N)\) is given by:
\begin{eqnarray}\label{Holder inequality extension m dimension}
E(|Z_1Z_2 \ldots Z_m|) \leq \left(E(|Z_1|^{q_1})\right)^{1/q_1} \cdot \left(E(|Z_2|^{q_2})\right)^{1/q_2} \cdot \ldots \cdot \left(E(|Z_N|^{q_N})\right)^{1/q_N}\,.
\end{eqnarray}
Let $ m=2 $, $ Z_{1}=\zeta_{i}^{\beta}$, $Z_{2}=\zeta_{j}^{(\alpha-\beta)} $ for $ \alpha\,,\beta\in\mathbb{N} $ and $ 1\leq \beta\leq \alpha $, $q_{1}= \frac{\alpha}{\beta} $ and  $q_{2}= \frac{\alpha}{\alpha-\beta} $.
\begin{eqnarray}\label{Holder inequality zeta_i,j alpha+beta}
\mathsf {E}\Big[\zeta_{i}^{\beta}\zeta_{j}^{(\alpha-\beta)}\Big]\leq \mathsf {E} \Big[\big|\zeta_{i}^{\beta}\zeta_{j}^{(\alpha-\beta)}\big|\Big]&\leq &\Big[\mathsf{E} \Big(\big|\zeta_{i}^{\beta}\big|^{\frac{\alpha}{\beta}}\Big)\Big]^{\frac {\beta}{\alpha}}\Big[\mathsf{E} \Big(\big|\zeta_{j}^{(\alpha-\beta)}\big|^{\frac{\alpha}{\alpha-\beta}}\Big)\Big]^{\frac {\alpha-\beta}{\alpha}} \nonumber\\
&=& \big[\mathsf{E} \big(\big|\zeta_{i}^{\alpha}\big|\big)\big]^{\frac {\beta}{\alpha}}\big[\mathsf{E} \big(\big|\zeta_{j}^{\alpha}\big|\big)\big]^{\frac {\alpha-\beta}{\alpha}} \nonumber\\
&=&  \mathsf{E} \big(\big|\zeta_{1}^{\alpha}\big|\big)\,.
\end{eqnarray} 
Recall that $ X_{1},\ldots , X_{N} $ are i.i.d. random vectors, and this implies that, for any $ \alpha>0 $, random variables $ \zeta_{1}^{\alpha},\ldots , \zeta_{N}^{\alpha} $ are identically distributed, based on the definitions of $ \zeta_{i}$. Thus, we have $ \mathsf{E}(\zeta_{1}^{\alpha})=\cdots\mathsf{E}(\zeta_{N}^{\alpha}) $. 
For $ \alpha=4 $ and $ \beta=1,2 $ respectively, we have
\begin{eqnarray}
&&\hspace{-1.2cm}\mathsf {E}(\zeta_{i}^{3}\zeta_{j})\leq \mathsf {E} (|\zeta_{i}^{3}\zeta_{j}|)\leq \Big[\mathsf{E} \Big(\big|\zeta_{i}^{3}\big|^{\frac{4}{3}}\Big)\Big]^{\frac {3}{4}}\Big[\mathsf{E} \Big(\big|\zeta_{j}^{1}\big|^{\frac{4}{1}}\Big)\Big]^{\frac {1}{4}}=\big[\mathsf{E} \big(\big|\zeta_{i}^{4}\big|\big)\big]^{\frac {3}{4}}\big[\mathsf{E} \big(\big|\zeta_{j}^{4}\big|\big)\big]^{\frac {1}{4}} =\mathsf {E} \big(\zeta_{1}^{4}\big),\quad \label{Holder inequality zeta_i,j 3+1}\\
&&\hspace{-1.2cm}\mathsf {E}(\zeta_{i}^{2}\zeta_{2}^{2})= \mathsf {E} (|\zeta_{i}^{2}\zeta_{j}^{2}|)\leq \Big[\mathsf{E} \Big(\big|\zeta_{i}^{2}\big|^{2}\Big)\Big]^{\frac {1}{2}}\Big[\mathsf{E} \Big(\big|\zeta_{j}^{2}\big|^{2}\Big)\Big]^{\frac {1}{2}}=\big[\mathsf{E} \big(\big|\zeta_{i}^{4}\big|\big)\big]^{\frac {1}{2}}\big[\mathsf{E} \big(\big|\zeta_{j}^{4}\big|\big)\big]^{\frac {1}{2}} =\mathsf {E} \big(\zeta_{1}^{4}\big).\quad \label{Holder inequality zeta_i,j 2+2}
\end{eqnarray} 
Substituting $ m=3 $, $ Z_{1}=\zeta_{i}^{2}$, $Z_{2}=\zeta_{j}$, $Z_{3}=\zeta_{k}$  for $q_{1}= 2$,  $q_{2}= 4 $, $q_{3}= 4 $ into Eq.(\ref{Holder inequality extension m dimension}), we obtain
\begin{eqnarray}\label{Holder inequality zeta_i,j,k 2+1+1}
\mathsf {E}(\zeta_{i}^{2}\zeta_{j}\zeta_{k})\leq \mathsf {E} (|\zeta_{i}^{2}\zeta_{j}\zeta_{k}|) &\leq &\Big[\mathsf{E} \Big(\big|\zeta_{i}^{2}\big|^{2}\Big)\Big]^{\frac {1}{2}}\Big[\mathsf{E} \Big(\big|\zeta_{j}\big|^{4}\Big)\Big]^{\frac {1}{4}}\Big[\mathsf{E} \Big(\big|\zeta_{k}\big|^{4}\Big)\Big]^{\frac {1}{4}}  \nonumber\\
&=&  \mathsf{E} \big(\zeta_{1}^{4}\big)\,.
\end{eqnarray} 
Substituting Eqs.(\ref{Holder inequality zeta_i,j 3+1})-(\ref{Holder inequality zeta_i,j,k 2+1+1}) into Eq.(\ref{EH_N power4 eq1}), we have
\begin{eqnarray}
\mathsf{E}(H_{N}^{4})&\leq &\frac{1}{N^{4}}\bigg\{\sum\limits_{i=1}^{N}\mathsf{E}\big(\zeta_{i}^{4}\big)+4\sum\limits_{i=1}^{N}\sum\limits_{\substack{j=1\\ j\neq i}}^{N}\mathsf{E}\big(\zeta_{i}^{4}\big)+3\sum\limits_{i=1}^{N}\sum\limits_{\substack{j=1\\ j\neq i}}^{N}\mathsf{E}\big(\zeta_{i}^{4}\big)+ 6\sum\limits_{i=1}^{N}\sum\limits_{\substack{j=1\\ j\neq i}}^{N}\sum\limits_{\substack{k=1\\ k\neq i,j}}^{N}\mathsf{E}\big(\zeta_{i}^{4}\big)   \nonumber\\
&&+\sum\limits_{i=1}^{N}\sum\limits_{\substack{j=1\\ j\neq i}}^{N}\sum\limits_{\substack{k=1\\ k\neq i,j}}^{N}\sum\limits_{\substack{l=1\\ k\neq i,j,k}}^{N}\mathsf{E}\big(\zeta_{i}\zeta_{j}\zeta_{k}\zeta_{l}\big)\bigg\}\nonumber \\
&=&\bigg[\dfrac{1}{N^{3}}+\dfrac{4(N-1)}{N^{3}}+\dfrac{3(N-1)}{N^{3}}+\dfrac{6(N-1)(N-2)}{N^{3}}\bigg]\mathsf{E}(\zeta_{1}^{4})+\nonumber\\
&&\dfrac{(N-1)(N-2)(N-3)}{N^{3}}\mathsf{E}\big(\zeta_{i}\zeta_{j}\zeta_{k}\zeta_{l}\big) \nonumber\\
&=& [o(N^{-3})+o(N^{-2})+o(N^{-1})] \mathsf{E}(\zeta_{1}^{4})+O(1)\mathsf{E}\big(\zeta_{i}\zeta_{j}\zeta_{k}\zeta_{l}\big)  \nonumber\\
&\rightarrow & \mathsf{E}\big(\zeta_{i}\zeta_{j}\zeta_{k}\zeta_{l}\big) \,,\quad N\rightarrow\infty\,.
\end{eqnarray}
Therefore, $\mathsf{E}(H_{N}^{4})  $ is bounded by $\mathsf{E}\big(\zeta_{i}\zeta_{j}\zeta_{k}\zeta_{l}\big)   $ when $ N $ tends to infinity. We set $ \widetilde{\zeta}_{i,j}= \zeta_{i}\zeta_{j}$ and $\widetilde{\zeta}_{k,l}= \zeta_{k}\zeta_{l}$, for any $ 1\leq i<j<k<l\leq N $. {\color{black}We can conclude that $ \widetilde{\zeta}_{i,j} $ and $ \widetilde{\zeta}_{k,l} $ are identically distributed because $\zeta_{1},\ldots ,\zeta_{N}  $ are identically distributed, i.e.
\begin{eqnarray}
X_{1},\ldots ,X_{N}\, \text{ i.i.d. }\,\,\Rightarrow \,\,\zeta_{1},\ldots ,\zeta_{N}\, \text{ identical} \,\,\Rightarrow \,\,  \widetilde{\zeta}_{i,j}, \widetilde{\zeta}_{k,l}\text{ identical}\,.
\end{eqnarray}
}
Thus, we aim to prove that 
\begin{eqnarray}\label{zeta_i,j,k,l exp}
\mathsf{E}\big(\zeta_{i}\zeta_{j}\zeta_{k}\zeta_{l}\big) =\mathsf{E}\big(\widetilde{\zeta}_{i,j}\,\widetilde{\zeta}_{i,j}\big)\,\rightarrow\,\mathsf{E}\big(\widetilde{\zeta}_{i,j}\big)\,\mathsf{E}\big(\widetilde{\zeta}_{l,k}\big)=\big[\mathsf{E}\big(\widetilde{\zeta}_{i,j}\big)\big]^{2} \,,\quad N\rightarrow\infty\,.
\end{eqnarray}  
Or equivalently
\begin{eqnarray}
\mathsf{Cov}\big(\widetilde{\zeta}_{i,j}\,,\widetilde{\zeta}_{i,j}\big)\rightarrow 0 \,,\quad N\rightarrow\infty  \,.
\end{eqnarray}
Recall that we have proved $ \mathsf{Cov} ({\zeta}_{i}\,,{\zeta}_{j})\rightarrow 0\,,\, N\rightarrow\infty $, and thus 
\begin{eqnarray}\label{zeta_i,j exp}
\mathsf{E}\big(\widetilde{\zeta}_{i,j}\big)=\mathsf{E}\big({\zeta}_{i}{\zeta}_{i}\big)\longrightarrow \mathsf{E}\big({\zeta}_{i}\big)\mathsf{E}\big({\zeta}_{j}\big)=\big[\mathsf{E}\big({\zeta}_{i}\big)\big]^{2} \,,\quad N\rightarrow\infty  \,.
\end{eqnarray}
Substituting Eq.(\ref{zeta_i,j exp}) into (\ref{zeta_i,j,k,l exp}), we obtain:
\begin{eqnarray}
\mathsf{E}\big(\zeta_{i}\zeta_{j}\zeta_{k}\zeta_{l}\big) =\mathsf{E}\big(\widetilde{\zeta}_{i,j}\,\widetilde{\zeta}_{k,l}\big)\rightarrow \big[\mathsf{E}\big(\widetilde{\zeta}_{i,j}\big)\big]^{2}=\big[\mathsf{E}\big({\zeta}_{i}\big)\big]^{4} \,,\quad N\rightarrow\infty\,,
\end{eqnarray}
and this implies:
\begin{eqnarray}
\mathsf{E}(H_{N}^{4})=[\mathsf{E}(H_{N})]^{4}\quad \Longleftrightarrow \quad\mathsf{Var}(H_{N}^{2})=0 \,\quad N\rightarrow\infty\,.
\end{eqnarray}\,.
\section*{Acknowledgements}
Nikolai Leonenko (NL) would like to thank for support and hospitality during the programme ``Fractional Differential Equations'' and the programme ``Uncertainly Quantification and Modelling of Materials'' in Isaac Newton Institute for Mathematical Sciences, Cambridge. Also NL was partially supported under the ARC Discovery Grant DP220101680 (Australia), LMS grant 42997 (UK) and grant FAPESP 22/09201-8 (Brazil).
\section*{Appendix}
Supplementary Material to `Varentropy Estimation via Nearest Neighbor Graphs'.

\newpage
\section*{Supplementary Material to `Varentropy Estimation via Nearest Neighbor Graphs'}

\author{\hspace{2cm} Nikolai Leonenko  \hspace{2cm}   Yu Sun \hspace{2cm}    Emanuele Taufer \hspace{-2cm}}
\subsection*{Appendix: Proofs and auxiliary results in the main text.}
\subsection*{A.1 Proofs of Lemma 2 and Lemma 3}
\begin{proof}
\textbf{Part.1} Following the integration by parts, we have
\begin{eqnarray}\label{lemma power 3 proof}
&&\int\limits_{(0,\frac{1}{e}]}(-\log u)^{3}\,\log (-\log u) d F(u)\nonumber\\
&=&\bigg[(-\log u)^{3}\,\log (-\log u)F(u)\bigg]_{0}^{\frac{1}{e}}-\int\limits_{(0,\frac{1}{e}]} F(u)d\bigg[(-\log u)^{3}\,\log (-\log u) \bigg]
\end{eqnarray}
The 1st part of the LHS $\bigg[(-\log u)^{3}\,\log (-\log u)F(u)\bigg]_{0}^{\frac{1}{e}} $ is 0 because $ F(0)=0 $ and $ \log (-\log \frac{1}{e})=0 $. The 2nd part of the LHS is 
\begin{eqnarray}\label{lemma power 3 proof LHS}
-\int\limits_{(0,\frac{1}{e}]} F(u)d\bigg[(-\log u)^{3}\,\log (-\log u)\bigg]=3\int\limits_{(0,\frac{1}{e}]} F(u)\dfrac{(\log u)^{2}}{u}\bigg[\log(-\log u)+\dfrac{1}{3}\bigg]du\quad
\end{eqnarray}
Substitute Eq.(\ref{lemma power 3 proof LHS}) into Eq.(\ref{lemma power 3 proof}), we obtain the statement (1) of \textit{Lemma} 2\\
\textbf{Part.2} We substitute $ F(u) \longrightarrow 1-F(u) $ in the deferential part as follows
\begin{eqnarray}\label{lemma power 3 proof (2.1)}
\int\limits_{[e,\infty)}(\log u)^{3}\,\log (\log u) d F(u)=-\int\limits_{[e,\infty)}(\log u)^{3}\,\log (\log u) d [1-F(u)]
\end{eqnarray}
Applying the rule of integration by parts, we have
\begin{eqnarray}\label{lemma power 3 proof (2.2)}
\hspace{-1.5cm}&&\int\limits_{[e,\infty)}(\log u)^{3}\,\log (\log u) d [1-F(u)]\nonumber\\
\hspace{-1.5cm}&=&-\bigg[\int\limits_{[e,\infty)}(\log u)^{3}\,\log (\log u)[1- F(u)]\bigg]_{e}^{\infty}+\int\limits_{[e,\infty)} [1-F(u)]d\bigg[(\log u)^{3}\,\log (\log u) \bigg]
\end{eqnarray}
The 1st part of the LHS $-\bigg[\int_{[e,\infty)}(\log u)^{3}\,\log (\log u)[1- F(u)]\bigg]_{e}^{\infty} $ is 0 because $ 1-F(\infty)=0 $ and $ \log (\log e)=0 $. The 2nd part of the LHS is 
\begin{eqnarray}\label{lemma power 3 proof LHS (2)}
\int\limits_{[e,\infty)}[1- F(u)]d\bigg[(\log u)^{3}\,\log (\log u)\bigg]=3\int\limits_{[e,\infty)} F(u)\dfrac{(\log u)^{2}}{u}\bigg[\log(\log u)+\dfrac{1}{3}\bigg]du\quad
\end{eqnarray}
Substitute Eq.(\ref{lemma power 3 proof LHS (2)}) into (\ref{lemma power 3 proof (2.2)}) and (\ref{lemma power 3 proof (2.1)}) , we obtain the statement (2) of \textit{Lemma} 2.\\
\textbf{Part.3} The proof of \textit{Lemma} 3 is analogous to the proof of \textit{Lemma} 2, and thus we skipped the details.
\end{proof}
\subsection*{A.2 Auxiliary results for the proof of finiteness of $ I_{1}(N,x) $}\label{section I1}
Note that
$ \varepsilon\in(0,1] $, $ x\in[0,1] $, then for all $ N>1 $, we have
\begin{eqnarray}\label{bernulli inequality xi X N}
1-(1-x)^{N}\leq (Nx)^{\varepsilon}
\end{eqnarray}
Really by the \textit{Bernoulli Inequality}: $ 1-(1-x)^{N}\leq Nx\,,\,\,x\in[0,1] $. 
Let us consider $x\in[0,\frac{1}{N}]$ and $x\in[\frac{1}{N}, 1]\,,\,\,N>1  $ separately.
\begin{itemize}
\item for $x\in[0,\frac{1}{N}]$, we have $ Nx\in[0,1] \,\,\Rightarrow\, Nx\leq (Nx)^{\varepsilon}\,,\,\,\varepsilon\in(0,1] $, and substitute it into \textit{Bernoulli Inequality}, we have $ 1-(1-x)^{N}\leq (Nx)^{\varepsilon}$.
\item for $x\in[\frac{1}{N},1]$, we have $ Nx>1\, \Rightarrow\,(Nx)^{\varepsilon}>1 $, and thus $ 1-(1-x)^{N}\leq 1 \leq (Nx)^{\varepsilon}$.
\end{itemize}
Thus, Eq.(\ref{bernulli inequality xi X N}) is proven.
\subsection*{A.3 Auxiliary results for the proof of finiteness of $J_{1}(N,x)$}
Really, for $ t\in[0,1] $, we have $0\leq 1-t\leq e^{-t} $, and thus power function $ (\cdot)^{N} $ satisfies 
\begin{align}\label{(1-t)N<e-tN} 
(1-t)^{N}\leq e^{-tN} 
\end{align}
We consider
\begin{eqnarray}
1-F_{N,x}(u)&=&\big[1-\mathsf{P}_{N,x}(u)\big]^{N-1}\nonumber\\
&\leq & e^{-(N-1)\,\mathsf{P}_{N,x}(u)} \label{1-F_Nx(u)}\\
&=& \exp{\Bigg(-\dfrac{u}{e^{\gamma}}\,\dfrac{\int_{B\big(x,r_{N}(u)\big)}f(y)dy}{r_{N}^{d}(u)V_{d}}\Bigg)}\nonumber\\
&\leq & \exp{\big(-\frac{u}{e^{\gamma}}\,m_{f}(x,R_{2})\big)} \label{1-F_Nx(u) 1}
\end{eqnarray}
Note that $ t>0 $ and $ \delta\in(0,e] $,  we have
\begin{eqnarray}\label{e_-t < t_-delta}
e^{-t}\leq t^{-\delta}
\end{eqnarray}
Really, for $ t>0 $ and $ \delta\in(0,e] $, we have $ 0<t^{-e}\leq t^{-\delta}<1 $. Therefore, the original statement is guaranteed if we prove the following statement
\begin{eqnarray}\label{e_-t < t_-e}
e^{-t}\leq t^{-e}\quad\Longleftrightarrow\quad \dfrac{\ln t}{t}\leq \dfrac{1}{e}
\end{eqnarray}
Let us define $ g(t)=\dfrac{\ln t}{t}\,,\,t>0 $. To prove Eq.(\ref{e_-t < t_-e}) by separating the domain of $ t $ into $t\in[0,1]$ and $t\in(1, \infty) $ respectively.
\begin{itemize}
\item for $t\in(0,1]$, it is worth nothing that $\ln t\leq 0 $. Considering the fact that $ \dfrac{1}{e}\geq 0 $, and consequently, we have $ \dfrac{\ln t}{t}\leq \dfrac{1}{e} $.
\item for $t\in(1,\infty)$, we calculate the 1st order of $g(t) $ as follows
\begin{eqnarray}\label{gx}
g'(t)=\frac{1}{t^{2}}(1-\ln t)
    \begin{cases}
      >0, & \text{if}\ t\in(1,e] \\
      =0, & \text{if}\ t=e\\
      <0, & \text{if}\ t\in(e,\infty)
    \end{cases}
\end{eqnarray}
Therefore, we have $ g(x)\leq \sup\limits_{t\in(1,\infty)} g(x)=g(e)=\dfrac{1}{e}$, which implies Eq.(\ref{e_-t < t_-e}).
\end{itemize}
Therefore, we proved $e^{-t}\leq t^{-t}\leq t^{-\delta}  $ and (\ref{e_-t < t_-delta}) is proven.
From the Eqs.(\ref{1-F_Nx(u) 1})-(\ref{e_-t < t_-delta}), we have:
\begin{eqnarray}\label{1-F_Nx(u) 2}
1-F_{N,x}(u) &\leq & \bigg[\frac{u}{e^{\gamma}}\,m_{f}(x, R_{2})\bigg]^{-\varepsilon}\,,\quad \varepsilon\in(0,e]
\end{eqnarray}
\subsection*{A.4 Auxiliary results for the investigation of $J_{2.1}(N,x)$}\label{subparagraph J_2 2.1}
Substituting Eq.(\ref{(1-t)N<e-tN}) into $J_{2.1}(N,x)$, we have 
\begin{eqnarray}
J_{2.1}(N,x)
&\leq & e^{-(N-2)\,\mathsf{P}_{N,x}(\sqrt{N-1})} \label{J_2 2.1 2} \nonumber\\
&=& \exp\left\{-(N-2)\dfrac{\mathsf{P}_{N,x}(\sqrt{N-1})}{\mid B\big(x,r_{N}(\sqrt{N-1})\big)\mid}\,\big| B\big(x,r_{N}(\sqrt{N-1})\big)\big|\right\}               \nonumber\\
&=& \exp\left\{-\dfrac{N-2}{N-1}\dfrac{\sqrt{N-1}}{e^{\gamma}}\,\dfrac{\mathsf{P}_{N,x}(\sqrt{N-1})}{\mid B\big(x,r_{N}(\sqrt{N-1})\big)\mid}\right\}
 \label{J_2 2.1 3}
\end{eqnarray}
Using $ u=\sqrt{N-1} $, we obtain 
\begin{eqnarray}\label{m_f(x,R_2) 2}
\dfrac{\mathsf{P}_{N,x}(\sqrt{N-1})}{\mid B\big(x,r_{N}(\sqrt{N-1})\big)\mid}&=&\dfrac{\int_{B\big(x,r_{N}(\sqrt{N-1})\big)}f(y)dy}{r_{N}^{d}(\sqrt{N-1})V_{d}}\nonumber\\
&\geq &\inf\limits_{r\in(0,R_{2}]}\dfrac{\int_{B\big(x,r\big)}f(y)dy}{r^{d}V_{d}}\nonumber\\
&:=&m_{f}(x,R_{2})
\end{eqnarray}
Note that
\begin{eqnarray}\label{N-2 N-1}
\dfrac{N-2}{N-1}\in\Big[\frac{1}{2}, 1\Big]\,,\qquad \forall N\geq 3
\end{eqnarray}
From Eqs.(\ref{J_2 2.1 3}), (\ref{m_f(x,R_2) 2}) and (\ref{N-2 N-1}), we obtain.
\begin{eqnarray}\label{J_2 2.1 4}
J_{2.1}(N,x)
&\leq & e^{-\frac{1}{2}\frac{\sqrt{N-1}}{e^{\gamma}}\,m_{f}(x,R_{2})} \label{J_2 2.1 4} \nonumber\\
&\leq & \bigg[\frac{\sqrt{N-1}}{2e^{\gamma}}\,m_{f}(x,R_{2})\bigg]^{-\varepsilon}\,,\qquad \varepsilon\in(0,e]  \label{J_2 2.1 5}\nonumber\\
&=& \dfrac{(2e^{\gamma})^{\varepsilon}}{m_{f}(x,R_{2})^{\varepsilon}}\dfrac{1}{(N-1)^{\varepsilon/2}}<\infty\,,\qquad \varepsilon\in(0,e] \label{J_2 2.1 6}
\end{eqnarray}
Note that with $ t=\frac{1}{2}\frac{\sqrt{N-1}}{\tilde{\gamma}}\,m_{f}(x,R_{2})\in(0,\infty) $ in Eq.(\ref{J_2 2.1 5}), we used together with Eq.(\ref{e_-t < t_-delta}).
\subsection*{A.5 Proof of Lemma 4}
\begin{proof}[Proof of Lemma 4]
Let us define $ g(w)=\dfrac{\log(1+\log w)}{\log(\log w)} $ and clearly, we derive the 1st order derivative of $ g(w) $ as follows
\begin{eqnarray}\label{1st order g(w)}
\dfrac{d g(w)}{d w}= \dfrac{1}{w\,\log\log w}\bigg(\dfrac{\log\log w}{\log w+1}-\dfrac{\log(\log w+1)}{\log w}\bigg) 
\end{eqnarray}
For $ w\in(e,\infty) $, we have $ \log w >1 $ and $ \log\log w >0 $. Moreover, we have $ \log\log w<\log(\log w+1) $ and $\log w+1>\log w $, which implies 
\begin{eqnarray}\label{loglogw}
\dfrac{\log\log w}{\log w+1}-\dfrac{\log(\log w+1)}{\log w}<0
\end{eqnarray}
Substituting Eq.(\ref{loglogw}) into (\ref{1st order g(w)}), we obtain
\begin{eqnarray}\label{1st order g(w) eq2}
\dfrac{d g(w)}{d w}<0 
\end{eqnarray}
This implies that for $ w\in[e^{1+\Delta},\infty) $ with an arbitrage small $ \Delta $>0, we obtain
\begin{eqnarray}
&& \widetilde{\Delta} :=\sup\limits_{w\in[e^{1+\Delta},\infty)} g(w)=g(w)\bigg|_{w=e^{1+\Delta}}=\frac{\log(2+\Delta)}{\log(1+\Delta)} 
\end{eqnarray}
Obviously, $ 0<\log(1+\Delta)\leq\log(2+\Delta)\,,\,\forall \Delta>0$, and thus we have 
$$
1 \leq \widetilde{\Delta}=\frac{\log(2+\Delta)}{\log(1+\Delta)}<\infty \,.
$$
with the equality holds only if $ \Delta\rightarrow 0 $. The \text{Lemma} 4 is proven. 
\end{proof}
\subsection*{A.6 Proof of Lemma 5}
\begin{proof}[Proof of Lemma 5]
For $ x\in\mathbb{R}^{d} $, $ d\geq 1 $, $ \alpha=1,2,3,4\ldots$, we prove that \\
\textbf{Step 1} 
there exits positive constant $ a_{0} $, such that
\begin{align}\label{G_rho^d eq1}
G(\mid\log{\xi_{2,x}}\mid^{\alpha})\leq a_{0}\,G(\mid\log{\rho}^{d}\mid^{\alpha})+b_{0}
\end{align}
Recall the definition of $ \xi_{N,x}=(N-1)V_{d}\,e^{\gamma}\min\limits_{j=2,\ldots ,N}\rho^{d}(x, X_{j}) $ and the random variables $ X_{1},X_{2}\cdots ,X_{N} $ are i.i.d and are assumed follow the same law of $ \xi $. Set $ N=2 $:
\begin{align}
\xi_{2,x}=V_{d}\,e^{\gamma}\rho^{d}(x,\xi),\,\, \text{and thus}
\,\,\,  \log \xi_{2,x} =  \log \rho^{d}+c_{0}\,,\,\,\, c_{0}=\log \big(V_{d}e^{\gamma}\big)
\end{align}
and  
\begin{align}\label{xi_2 alpha eq1}
|\log \xi_{2,x}|^{\alpha} &=\big| \log \rho^{d}\big|^{\alpha}\cdot\bigg|1+\dfrac{c_{0}}{(\log \rho^{d})}\bigg|^{\alpha}  \nonumber\\
&\leq \big| \log \rho^{d}\big|^{\alpha}\cdot2^{(\alpha-1)}\bigg(1+\dfrac{|c_{0}|^{\alpha}}{\big| \log \rho^{d}\big|^{\alpha}} \bigg) 
\end{align}
where we apply the Jensen inequality to the convex function $ f(y)=y^{\alpha} $, $ \alpha\geq 1 $:
\begin{align}
\bigg(\dfrac{a+b}{2}\bigg)^{\alpha}\leq \dfrac{a^{\alpha}+b^{\alpha}}{2} \quad &\Longrightarrow\quad (a+b)^{\alpha}\leq 2^{(\alpha-1)}\big(a^{\alpha}+b^{\alpha}\big)  \nonumber\\
&\Longrightarrow\quad |a+b|^{\alpha}\leq 2^{(\alpha-1)}\big|a^{\alpha}+b^{\alpha}\big|\leq 2^{(\alpha-1)}\big(|a|^{\alpha}+|b|^{\alpha}\big)
\end{align}
We define $\widetilde{d}= \min\limits_{\rho\in (0,1/e) \cup(e,\infty)} |\log \rho^{d}|  $. Then, one has $1\leq d=\log e^{d}< \widetilde{d}<\infty $ and
\begin{align}\label{xi_2 alpha eq2}
\sup_{\rho\in (0,1/e) \cup(e,\infty)}\bigg(1+\dfrac{|c_{0}|^{\alpha}}{\big| \log \rho^{d}\big|^{\alpha}} \bigg) =1+\dfrac{|c_{0}|^{\alpha}}{\big(\tilde{d}\big)^{\alpha}}:=c_{1}
\end{align}
and $ 1\leq c_{1}<\infty$. From Eqs.(\ref{xi_2 alpha eq1}) and (\ref{xi_2 alpha eq2}), we obtain
\begin{align}\label{xi_2 alpha eq3}
|\log \xi_{2,x}|^{\alpha} &\leq 2^{(\alpha-1)}c_{1}\big| \log \rho^{d}\big|^{\alpha}  \nonumber\\
& =\tilde{c}_{1}\big|\log \rho^{d}\big|^{\alpha} ,\quad \tilde{c}_{1}=2^{(\alpha-1)}c_{1}
\end{align}
Clearly, $ |\log \xi_{2,x}|^{\alpha} \geq 1 $ and we calculate the logarithm of Eq.(\ref{xi_2 alpha eq3}) as follows.
\begin{align}\label{xi_2 alpha eq4}
\log|\log \xi_{2,x}|^{\alpha} &\leq \log\tilde{c}_{1}+\log\big| \log \rho^{d}\big|^{\alpha}        \nonumber\\
&=\log\big| \log \rho^{d}\big|^{\alpha} \bigg(1+\dfrac{\log\tilde{c}_{1}}{\log\big| \log \rho^{d}\big|^{\alpha}}\bigg)  \nonumber\\
&\leq c_{2}\log\big| \log \rho^{d}\big|^{\alpha} 
\end{align}
where
\begin{align}\label{xi_2 alpha eq4}
c_{2}:=\sup_{\rho\in (0,1/e) \cup(e,\infty)}\bigg(1+\dfrac{\log\tilde{c}_{1}}{\log\big| \log \rho^{d}\big|^{\alpha}}\bigg)=1+\dfrac{\log \tilde{c}_{1}}{\alpha\log \widetilde{d}}
\end{align}
and obviously, $ 1<c_{2}<\infty $. From Eqs.(\ref{xi_2 alpha eq3}), we have
\begin{align}\label{xi_2 alpha eq5}
G(|\log \xi_{2,x}|) \leq \tilde{c}_{1}c_{2}\,G\big(\big|\log \rho^{d}\big|\big)  
\end{align}
\textbf{Step 2} there exits positive constants $ a_{1},b_{1} $, such that
\begin{align}\label{G_rho^d^alpha eq1}
G(\mid\log{\rho}^{d}\mid^{\alpha})\leq a_{1}\,G(\mid\log{\rho}\mid^{\alpha})+b_{1}
\end{align}
According to the definition of $ G(\cdot) $, we have 
\begin{align}\label{G_rho^d^alpha eq2}
G(\mid\log{\rho}^{d}\mid^{\alpha})&=\big|\log{\rho}^{d}\big|^{\alpha}\log\big(\big|\log{\rho}^{d}\big|^{\alpha}\big)    \nonumber\\
&=d^{\alpha}\big|\log{\rho}\big|^{\alpha} \alpha\big(\log d+ \log\big(\big|\log{\rho}\big|\big) \big)   \nonumber\\
&=d^{\alpha}G(\mid\log{\rho}\mid^{\alpha})+d^{\alpha}\log d^{\alpha}|\log\rho|^{\alpha}
\end{align}
Note that $\mid\log{\rho}^{d}\mid^{\alpha}= \mid\log^{\alpha}{\rho}^{d}\mid $. From the monotonicity of the power function ($ \alpha>0 $), one has 
\begin{enumerate}[(i)]
\item when $ \rho\in[1/e,e] $, we have $ 0\leq |\log{\rho}|^{}\leq 1 $ and equivalently $ 0\leq |\log{\rho}|^{\alpha}\leq 1 $. In this case, $ G(\mid\log{\rho}\mid^{})=G(\mid\log{\rho}\mid^{\alpha})=0$, and $G(\mid\log{\rho}^{d}\mid^{\alpha})=d^{\alpha}\log d^{\alpha}|\log\rho|^{\alpha}\geq 0  $. Obviously, Eq.(\ref{G_rho^d^alpha eq1}) holds.
\item when $ \rho\in(0,1/e)\cup (e,\infty) $, we have $ G(\mid\log{\rho}\mid^{\alpha}) \neq 0 $ and 
\begin{align}\label{G_rho^d^alpha eq3}
\dfrac{G(\mid\log{\rho}^{d}\mid^{\alpha})}{G(\mid\log{\rho}\mid^{\alpha})}=d^{\alpha}+d^{\alpha}\log d\dfrac{1}{\log |\log\rho|}
\end{align}
\textbf{(ii.1)} when $ \rho\in(e,\infty):= (e,T)\cup [T,\infty)$ with $T\in(e,\infty) $, we have $ \log\rho>1$, and hence  $  |\log{\rho}|^{\alpha}=(\log{\rho})^{\alpha}$, and $ G(\mid\log{\rho}\mid^{\alpha})>0 $.  \\
\textbf{(ii.1.a)} when $ \rho\in (e,T)$, we have $ 1< \log{\rho}<\log T $ and 
\begin{align}\label{sup_rho 1}
\sup_{\rho\in (e,T)}\big\{{\log |\log\rho|}\big\}=\log(\log T)
\end{align}
From Eqs.(\ref{sup_rho 1}) and (\ref{G_rho^d^alpha eq2}), we obtain:
\begin{align}
G(\mid\log{\rho}^{d}\mid^{\alpha})&\leq d^{\alpha}G(\mid\log{\rho}\mid^{\alpha})+d^{\alpha}\log d^{\alpha}\log(\log T)  \nonumber\\
&=a_{1}G(\mid\log{\rho}\mid^{\alpha})+b_{1}
\end{align}
with $ a_{1}:=d^{\alpha} >0$ and $ b_{1}:=d^{\alpha}\log d^{\alpha}\log(\log T)>0$. Thus, we proved Eq.(\ref{G_rho^d^alpha eq1})\\
\textbf{(ii.1.b)} when $ \rho\in [T,\infty)$, we have $ 1< \log{\rho}\leq \log T $ and  
\begin{align}\label{sup_rho 2}
\sup_{\rho}\bigg\{\dfrac{1}{\log |\log\rho|}\bigg\}=\dfrac{1}{\log |\log T|}
\end{align}
From Eqs.(\ref{sup_rho 1}) and (\ref{G_rho^d^alpha eq3}), we obtain:
\begin{align}
\dfrac{G(\mid\log{\rho}^{d}\mid^{\alpha})}{G(\mid\log{\rho}\mid^{\alpha})}&\leq d^{\alpha}+d^{\alpha}\log d\dfrac{1}{\log |\log T|}:=a_{1} <\infty \nonumber\\
\Longleftrightarrow\,\, G(\mid\log{\rho}^{d}\mid^{\alpha})&\leq a_{1}G(\mid\log{\rho}\mid^{\alpha})
\end{align}
Thus, the Eq.(\ref{G_rho^d^alpha eq1}) is proved.\\
\textbf{(ii.2)} when $ \rho\in(0,1/e) $, we set $ \rho=1/\bar{\rho}$ with $\bar{\rho}\in (e,\infty)$ and hence, $ |\log \rho|= |\log \bar{\rho}| $. Obviously, the Eqs.(\ref{G_rho^d^alpha eq2}) and (\ref{G_rho^d^alpha eq3}) stay unchanged after replacing $ \rho $ with $ \bar{\rho} $. From \textbf{(ii.1.a)} and \textbf{(ii.1.b)}, the Eq.(\ref{G_rho^d^alpha eq1}) is proven.
\end{enumerate}
From (i) and (ii), the Eq.(\ref{G_rho^d^alpha eq1}) is proven.\\
\end{proof}
\subsection*{A.7 Proof of Lemma 6}
\begin{proof}[Proof of Lemma 6]
Recall that $ X_{1},\ldots,X_{N}\,\overset{\text{i.i.d.}}{\sim}\xi  $ and substituting $ u=0 $ into Eq.(125) in the main text, we have
\begin{eqnarray}
\mathsf{Pr}\big(\eta^{y}_{N,x}> 0\big)&=&\mathds{1}\Big[\rho(x, y)>0\Big] \Big[\mathsf{Pr}\Big(\rho(\xi,x)>0\Big)\Big]^{N-2}\,\nonumber\\ 
&=&\Big\{1-\mathds{1}\Big[\rho(x, y)=0\Big]\Big\} \Big[1-\mathsf{Pr}\Big(\rho(\xi,x)=0\Big)\Big]^{N-2}\,\nonumber\\ 
&=&\Big\{1-\underbrace{\mathds{1}\big[x=y\big]}_{=0}\Big\} \Big[1-\underbrace{\mathsf{Pr}\big(\xi=x\Big)}_{=0}\big]^{N-2}\,\nonumber\\
&=&1
\end{eqnarray}
Analogously, we can obtain $ \mathsf{Pr}\big(\eta^{x}_{N,y}> 0\big) =1$. Therefore, we exclude a set of zero probability for the random variable is zero, i.e. $ \eta^{}_{N,x,y}=\big(\eta^{y}_{N,x}, \eta^{x}_{N,y}\big)=(0,0) $. The \text{Lemma} 6 is proved.
\end{proof}

\end{document}